\documentclass[12pt]{amsart}
\usepackage[margin=1in,includeheadfoot]{geometry}
\usepackage{amsmath}
\usepackage{slashed}
\usepackage{graphicx}

\usepackage{mathrsfs}
\usepackage{txfonts}
\usepackage{amssymb,dsfont}
\usepackage{enumerate}
\usepackage{slashed}
\usepackage{fancyhdr}
\usepackage{aliascnt}
\usepackage{pdfsync}
\usepackage{hyperref}

\newtheorem{thm}{Theorem}[section]
\newtheorem{cor}[thm]{Corollary}
\newtheorem{lem}[thm]{Lemma}
\newtheorem{prop}[thm]{Proposition}
\theoremstyle{definition}
\newtheorem{defn}[thm]{Definition}
\theoremstyle{remark}
\newtheorem{rem}[thm]{Remark}
\numberwithin{equation}{section}

\newcounter{stepnum}

\def\bee{\begin{eqnarray}}
\def\beee{\begin{eqnarray*}}
\def\eee{\end{eqnarray}}
\def\eeee{\end{eqnarray*}}

\def\ba{\begin{array}}
\def\ea{\end{array}}

\def\R{\mathbb R}

\begin{document}

\title[Asymptotic analysis for approximate harmonic maps from degenerating cylinders]{Asymptotic analysis for approximate harmonic maps from degenerating cylinders and applications to minimal surfaces}

\author[Li]{Jiayu Li}%
\address{School of Mathematical Sciences, University of Science and Technology of China, Hefei, 230026, People's Republic of China}%
\email{jiayuli@ustc.edu.cn}%

\author[Liu]{Lei Liu}
\address{School of Mathematics and Statistics, Key Laboratory of Nonlinear Analysis and Applications (Ministry of Education), Hubei Key Laboratory of Mathematical Sciences, Central China Normal
University, Wuhan, 430079, People's Republic of China}%
\email{leiliu2020@ccnu.edu.cn}
\author[Zhu]{Miaomiao Zhu}
\address{School of Mathematical Sciences \\ Shanghai Jiao Tong University \\ Dongchuan Road 800 \\ Shanghai, 200240, People's Republic of China}
\email{mizhu@sjtu.edu.cn}

\thanks{April 27, 2026}

\subjclass[2010]{53C42, 35K51}
\keywords{Approximate harmonic map, Minimal cylinders, Free boundary}

\date{}
\begin{abstract}
We investigate the blow-up analysis and quantitative behavior for a sequence of maps $\{u_n\}_{n=1}^\infty$ from degenerating tori $(T^2,g_n)$ or from degenerating cylinders $(S^1\times [0,\pi],g_n)$ with free boundary conditions $u_n(S^1\times \{0,\pi\})\subset K$ to a compact Riemannian manifold $(N,h)$ satisfying $$E(u_n)+\|\tau(u_n,g_n)\|_{L^2}\leq \Lambda<\infty,$$ where $\tau(u_n,g_n)$ is the tension field of $u_n$, $K\subset N$ is a smooth submanifold. We establish generalized energy identities and prove that away from bubbles, the asymptotic limit of the necks are either some geodesics on $N$ or some geodesic-like curves on $K$ where some length formulas are given. This partially confirms a conjecture by Ding-Li-Liu \cite{Ding-Li-Liu} in the sense of approximate sequence case.

Moreover, we study an evolution system to seek minimal cylinders in a compact Riemannian manifold with free boundary and with arbitrary codimensions. By studying the convergence of the flow at infinity, we obtain some existence results of minimal cylinders with free boundary. Compared with the closed case in, an interesting new phenomenon here is that the neck may converges to a geodesic-like curve on $K$.
\end{abstract}
\maketitle

\section{Introduction}

\

Let $(M,g)$ be a compact Riemannian manifold with smooth boundary $\partial M$ and $(N,h)$ a compact Riemannian manifold of dimension $n$. For a map $u\in C^2(M,N)$, the energy density of $u$ is defined by
\[
e(u)=\frac{1}{2}|\nabla_g u|^2=\frac{1}{2}{\rm Trace}_g \ u^*h,
\]
where $u^*h$ is the pull-back of the metric tensor $h$. The energy of the map $u$ is defined as $$E(u)=\int_Me(u)dvol_g.$$

Let $K\subset N$ be a $k$-dimensional submanifold where $1\leq k\leq n-1$. Define the class $$C(K)=\{u\in C^2(M,N);u(\partial M)\subset K\}.$$ A critical point of the energy functional $E(\cdot,\cdot)$ over $C(K)$ is called a harmonic map from $(M,g)$ into $(N,h)$ with free boundary $u(\partial M)\subset K$. By Nash's embedding theorem,  $(N,h)$ can be isometrically embedded into some Euclidean space $\mathbb{R}^N$. Then we derive the Euler-Lagrange equations
\[
-\Delta_g u=A(u)(\nabla_g u,\nabla_g u),
\]
where $A(\cdot,\cdot)$ is the second fundamental form of $N\subset \mathbb{R}^N$ and $\Delta_g$ is the Laplace-Beltrami operator on $(M,g)$ which is defined by $$\Delta_g:=\frac{1}{\sqrt{g}}\frac{\partial}{\partial x^\beta} \left(\sqrt{g}g^{\alpha\beta}\frac{\partial}{\partial x^\alpha} \right).$$ Moreover, for $1\leq k\leq n-1$, $u$ has free boundary $u(\partial M)$ on $K$, that is $$u(x)\in K,\ \ du(x)(\overrightarrow{n})\perp T_{u(x)}K,\ \ x\in \partial M,$$ where $\overrightarrow{n}$ is the outward unite normal vector on $\partial M$ and $\perp$ means orthogonal.

In dimension two, i.e. $(M,g)$ is a surface, harmonic maps have important applications to minimal surface. It is well known that a conformal harmonic map is in fact a minimal surface. Thus it provides a way to obtain minimal surfaces (with arbitrary codimension) in a give Riemannian manifold by finding conformal harmonic maps.

From the perspective of variational methods, this was achieved by Schoen-Yau \cite{Schoen-Yau}, Sacks-Uhlenbeck \cite{SU}, Sacks-Uhlenbeck \cite{SU-2} for closed surfaces. The idea is that for each conformal structure on the domain surface, one produces an energy minimizing map which is a harmonic map, and then minimize over all conformal structures to produce a branched minimal surface. Later, this idea was used to produce minimal surfaces with free boundary, see Fraser \cite{Fraser} for the case of minimal disks  and Chen-Fraser-Pang \cite{Chen-Fraser-Pang} for high genus cases.

In \cite{Ding-Li-Liu}, Ding-Li-Liu introduced a heat flow approach to seek minimal surfaces in Riemannian manifold with arbitrary codimension by considering the negative $L^2$ gradient flow of the energy functional $E(u,g)$ given by
\begin{align}
\begin{cases}
\frac{\partial u}{\partial t}=\tau(u,g),\ \ &in\ \ M\times [0,T),\\
\frac{\partial g}{\partial t}=u^*h-e(u,g)g,\ \ &in\ \ M\times [0,T),
\end{cases}
\end{align}
where $\tau(u,g):=\Delta_gu+A(u)(\nabla_gu,\nabla_gu)$ is the tension field of the map $u$. In order to have a good representation of the space of domain metrics, they considered the torus case, i.e. $M=T^2$. Later, the higher genus extension of this flow method was developed by Rupflin-Topping \cite{RT}, Rupflin \cite{R}, Rupflin-Topping-Zhu \cite{RTZ} etc. and the case of Douglas-Plateau problem for this flow was studied by Rupflin \cite{R2}.

Before stating our results, let us recall first the setting in \cite{Ding-Li-Liu}. Let $H$ be the upper half plane. For any $z=a+ib\in H$, we have a parallelogram $R_z$ determined by the vertices $0,1,z,z+1$. We can also view $R_z$ as a flat cylinder $\mathbf{C}/\{1,z\}$ with a conformal structure represented by the flat metric. Let $L_z$ be the unique linear map which maps the unit square $R_i$ onto $R_z$ with fixed point $1$. Denoting $ds^2$ the Euclidean metric on $\mathbf{C}$, then $$G_z=L_z^*ds^2$$ is a flat metric representing a conformal structure on $M=\mathbf{C}/\{1,i\}$. This gives a representation of the Teichm\"{u}ller  space by flat metrics. Furthermore, if we normalize the flat metric $G_z$ such that it has unit area, then all such metrics can be parameterized by two constant $(a,b)\in \mathbf{R}\times \mathbf{R}^+$, and given by following matrix
$$g_{a,b}=\frac{1}{b}G_{a+ib}=\frac{1}{b}\left(
                                                            \begin{array}{cc}
                                                              1 & a \\
                                                              a & a^2+b^2 \\
                                                            \end{array}
                                                          \right).
$$
This metric on $M$ corresponds to the conformal structure determined by the parallelogram $R_z$ with $z=a+ib$. Denote $\mathbf{g}$ the space of all such metrics.

For the metric $g=g_{a,b}\in\mathbf{g}$ on $M$, the energy is given as follows
\begin{align*}
E(u,g)=\frac{1}{2b}\int_M \left[(a^2+b^2)|\frac{\partial u}{\partial x^1}|^2+|\frac{\partial u}{\partial x^2}|^2-2a\frac{\partial u}{\partial x^1}\cdot \frac{\partial u}{\partial x^2}\right]dx^1dx^2.
\end{align*}

They \cite{Ding-Li-Liu} considered the following modified negative gradient flow that
\begin{align}\label{equ:13}
\begin{cases}
\frac{\partial u}{\partial t}=\tau(u,g),\ \ &in \ \ M\times [0,T),\\
\frac{da}{dt}=-b\int_M \left[a|\frac{\partial u}{\partial x^1}|^2-\frac{\partial u}{\partial x^1}\cdot \frac{\partial u}{\partial x^2}\right]dM,\ \ &in \ \ M\times [0,T),\\
\frac{db}{dt}=-\frac{1}{2}\int_M \left[(b^2-a^2)|\frac{\partial u}{\partial x^1}|^2-|\frac{\partial u}{\partial x^2}|^2+2a\frac{\partial u}{\partial x^1}\cdot \frac{\partial u}{\partial x^2}\right]dM,\ \ &in \ \ M\times [0,T),
\end{cases}
\end{align} with the initial data
\begin{equation}\label{equ:14}
u_{t=0}=u_0,\ \ g_{t=0}=g_0=g_{a_0,b_0},
\end{equation} where $g=g_{a,b}$. They proved that the metric $g=g_{a,b}$ along the flow \eqref{equ:13}-\eqref{equ:14} will not degenerate at any finite time. Moreover, they obtained following theorem that
\begin{thm}[Theorem 4.1 in \cite{Ding-Li-Liu}]\label{thm:03}
Suppose $(u,g)$ is a solution of \eqref{equ:13}-\eqref{equ:14} on $T^2\times (0,\infty)$ with initial data $(u_0,g_0)$. Then one of the following two cases will happen:
\begin{itemize}
\item[(i)] There exists $t_i\to\infty$ such that $g(t_i)\to g_\infty\in\mathbf{g}$ and $u(t_i)$ converges strongly to $u_\infty$ except at finitely points. There may exist a finite number of bubbles $\{w_i\}_{i=1}^m$ such that $$\lim_{t\to\infty}E(u(t),g(t))=E(u_\infty,g_\infty)+\sum_{i=1}^mE(w_i).$$ Moreover, $u_\infty$ has the same conjugacy class of $u_0$ and it is a branched minimal immersion unless it is a constant.

    \

\item[(ii)] There exist $t_i\to\infty$ such that $g(t_i)$ does not converge in $\mathbf{g}$ and there exist a finite number of bubbles $\{w_i\}_{i=1}^m$ such that $$\lim_{t\to\infty}E(u(t),g(t))=\sum_{i=1}^mE(w_i).$$ Moreover, if no bubble occurs at infinity, i.e. $\lim_{t\to\infty}E(u(t),g(t))=0$, then as $t\to\infty$, the image of $u(t)$ must subconverge to either a point or a closed geodesic in $N$.
\end{itemize}
\end{thm}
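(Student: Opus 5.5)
The plan is to reduce both alternatives to the blow-up theory for approximate harmonic maps. The reduction rests on the energy identity of the flow. A direct computation using \eqref{equ:13} gives
\[
\frac{d}{dt}E(u,g)=-\int_M|\tau(u,g)|^2\,dM-\frac{1}{b^2}\Big|\frac{da}{dt}\Big|^2-\Big|\frac{db}{dt}\Big|^2\cdot\frac{1}{b^2}\,(\text{up to constants}),
\]
since the $(a,b)$ equations are, up to the positive factors coming from the hyperbolic metric on $H$, the negative gradient of $E$ in the metric variable. Hence $E(u(t),g(t))$ is nonincreasing and $\int_0^\infty\|\tau\|_{L^2}^2\,dt\le E(u_0,g_0)$. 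We can therefore choose $t_i\to\infty$ with $\|\tau(u(t_i),g(t_i))\|_{L^2}\to0$ and $|\dot a|,|\dot b|\to0$ at $t_i$. Vanishing of the metric gradient means the Hopf differential of $u(t_i)$ is $L^2$-orthogonal to the holomorphic quadratic differentials; on the torus these are the constants, so the Hopf differential tends to $0$ in an averaged sense. The flow's non-degeneration in finite time, already established in \cite{Ding-Li-Liu}, makes the long-time solution available.

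\textbf{Case (i): $g(t_i)\to g_\infty$ in $\mathbf g$.} The metrics are uniformly equivalent, so $u_i=u(t_i)$ is a sequence with bounded energy and tension tending to $0$ in $L^2$ on a fixed torus. By $\varepsilon$-regularity for approximate harmonic maps (small energy on a disk gives $W^{2,2}$ bounds from $\|\tau\|_{L^2}$), $u_i$ converges strongly away from finitely many energy-concentration points to a harmonic $u_\infty$. The standard bubble-tree construction then produces bubbles $w_j$. The energy identity and the no-neck property follow from the known results of Ding–Tian, Qing–Tian and Lin–Wang type for sequences with $\tau$ bounded in $L^2$. To conclude, the Hopf differential of $u_\infty$ is holomorphic, hence constant on $T^2$. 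The vanishing metric gradient in the limit forces that constant to be $0$, and here strong convergence is needed to pass the quadratic quantity to the limit, which is fine because bubbles contribute nothing to the integral against a constant differential. Thus $u_\infty$ is conformal harmonic, i.e.\ a branched minimal immersion or constant. The conjugacy class statement follows because no necks means the induced action on $\pi_1$ is preserved, via homotopy of $u_i$ to $u_\infty$ off small disks.

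\textbf{Case (ii): $g(t_i)$ degenerates.} Normalizing by the modular group, $b\to\infty$, and $(T^2,g(t_i))$ becomes a long thin cylinder $S^1_{\ell_i}\times[0,1/\ell_i]$ with $\ell_i\to0$ after rescaling. I would conformally rewrite it as the flat cylinder $S^1\times[0,T_i]$ with $T_i\to\infty$, where the tension bound scales favorably. The thick part disappears, so all energy is either in bubbles or in the necks. The main obstacle is showing that the necks carry no energy. My first attempt is the neck analysis: on $S^1\times[s,s+1]$ decompose $|\partial_\theta u|^2$ and $|\partial_s u|^2$. The approximate Hopf differential vanishing gives $\int(|\partial_s u|^2-|\partial_\theta u|^2)\to0$ globally. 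A three-circle/ODE argument for the angular energy then shows that the angular part decays exponentially away from bubbles, with the tension error controlled by $\|\tau\|_{L^2}$ on the rescaled domain tending to $0$. Together these give total neck energy $o(1)$, hence $\lim E=\sum E(w_j)$.

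For the last statement, if $\lim E=0$ there are no bubbles, and $\operatorname{osc}_{S^1\times\{s\}}u\to0$ uniformly by the angular decay. The curves $s\mapsto u(\cdot,s)$, suitably reparametrized by arclength, satisfy in the limit the geodesic equation, since $\tau\to0$ and the $\theta$-dependence disappears. The torus closes up, so the limit is a closed geodesic, or a point if its length $\lim\int|\partial_s u|\,ds\to0$. The delicate step is obtaining uniform length bounds, for which I would use $L\le \sqrt{T_i\cdot 2E}$ against the scaling of $T_i$.
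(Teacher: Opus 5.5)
Your treatment of case (i), and of the energy identity in case (ii), is essentially sound. Once you note that the bubble regions (including harmonic maps on $\mathbb{R}\times S^1$, which are conformal spheres) contribute $o(1)$ to $\int \mathrm{Re}\,\phi$, the global vanishing of $\int_{T^2}\phi$ together with $\int_{\rm necks}|\partial_\theta u|^2=o(1)$ kills the neck energy.

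The genuine gap is in the last assertion of (ii). The paper obtains it from Theorem \ref{thm-main:01}(3) via Theorem \ref{thm-main:02}, and the hypothesis \eqref{equ:17} there is exactly what requires a flow-specific input that your proposal lacks. On the rescaled cylinder $[-T_i,T_i]\times S^1$ with $T_i\sim\rho_i^{-2}$, the facts $E(t_i)\to0$ and $\|\tau\|_{L^2}\to0$ control nothing that accumulates along the whole neck.

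\begin{itemize}
\item Your bound $L\le\sqrt{2T_iE(t_i)}$ gives finite length only if $E(t_i)\le C\rho_i^2$, and nothing in your argument produces such a rate.
\item The tension contribution to $\frac{d}{ds}u^*$, integrated along the neck, is only bounded by $C|\alpha_i|^{-1/2}\|\tau(v_i)\|_{L^1}$. When the length stays bounded, this is of size $C\rho_i^{-2}\|\tau(u(t_i),g(t_i))\|_{L^2}$ in the original scale and need not tend to zero. So ``since $\tau\to0$'' does not yield the geodesic equation in the limit.
\end{itemize}

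The missing idea is the comparison of decay rates along the flow, which the paper imports from Ding--Li--Liu and reproves for free boundary in Lemma \ref{lem:06} and Proposition \ref{prop:02}. It runs as follows.
\begin{itemize}
\item On the closed torus, testing the equation against $u-u^*$ gives $\int|\partial_\theta u|^2\le C(E^{3/2}+\sqrt{E}\|\tau\|_{L^2}+\|\tau\|_{L^2}^2)$ in rescaled variables.
\item Hence $\frac14|\int\phi|^2\ge(1-o(1))E^2$ up to tension terms, so $\frac{d}{dt}E\le-(1-o(1))E^2$ and $E\le C/t$.
\item The $b$-equation gives $|\frac{d}{dt}\log b|\le E$, so $b\le Ct$, $\rho\ge ct^{-1/2}$, and therefore $E\le C\rho^2$.
\item A contradiction argument then yields $t_n\to\infty$ with $\|\tau(u(t_n),g(t_n))\|_{L^2}=o(\rho(t_n)^2)$.
\end{itemize}
These two estimates imply \eqref{equ:17}. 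Your time sequence, chosen only so that $\|\tau\|_{L^2}\to0$ and $\int\phi\to0$, has neither property.

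Even granting these rates, ``the $\theta$-dependence disappears'' is not enough to pass to the geodesic equation. The arclength equation for $u^*$ carries the factor $|\dot u^*|^{-2}$, so you need $|\dot u^*|$ uniformly comparable to $\sqrt{|\alpha_i|}$ along the entire neck. Uniform smallness of the oscillation on circles does not give this.

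The paper gets it from the Pohozaev-type constant of Lemma \ref{lem:pohozaev-constant}, which makes $\int_{\{s\}\times S^1}|\partial_s u|^2d\theta$ nearly constant in $s$. This is combined with Lemmas \ref{lem:02} and \ref{lem:03}: $\int|\partial_\theta u|^2=o(|\alpha_i|)$ on pieces of bounded length, and $|\alpha_i|^{-1/2}(\hat u_i(s_i+s,\theta)-\hat u_i(s_i,0))\to\overrightarrow{a}\,s$ with $|\overrightarrow{a}|$ fixed. Finally, the finiteness of $\mu=\lim\sqrt{|\alpha_i|}\,T_i$, which is what rules out an infinite-length limit curve, is again exactly the estimate $E\le C\rho^2$ that your proposal does not supply.
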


\

In \cite[Remark 4.2]{Ding-Li-Liu}, Ding-Li-Liu imposed the following conjecture corresponding to the second conclusion $\rm{(ii)}$ of above theorem:

\

\textbf{``We believe that in the second case, away from the bubbles, the limit of the image of $u(t)$ may also be either a point or a closed geodesic (at least a closed cycle of geodesic segments)."}

\

Our first result is to partially answer the above conjecture in the sense of approximate sequence case. Before stating the results, we need some notations. Let $g_n$ be a sequence of degenerated metrics on $T^2$ with volume $1$. Let $\rho_n$ be the length of the shortest closed geodesic on $(T^2,g_n)$ which goes to zero as $n\to\infty$. Then we can see $(T^2,g_n)$ as a cylinder
$$\left([-\frac{1}{2}\rho_n^{-1},\frac{1}{2}\rho_n^{-1}]\times \rho_n\frac{1}{2\pi}S^1, ds^2+d\theta^2\right).$$
For a sequence of maps $u_n:(T^2,g_n)\to (N,h)$ with tension fields $\tau(u_n,g_n)$, we define the following quantity that
\begin{equation}\label{equ:01}
\alpha_n:=\frac{\rho_n}{2\pi}\left(\int_{\{s\}\times S^1}\left (|\partial_s u_n|^2-|\partial_\theta u_n|^2\right )d\theta -2\int_{[0,s]\times S^1}\tau( u_n,g_n)\cdot \partial_s u_n dsd\theta\right),\quad   (s,\theta)\in (T^2,g_n).\end{equation}
We will show that the quantity $\alpha_n$ is independent of $s$, i.e. $\alpha_n$ is a constant which can be considered as a Pohozaev type constant to measure the extent to which the Pohozaev type identity fails. See Lemma \ref{lem:pohozaev-constant}. Denote the Hopf differential form of $u_n$ by $\phi(u_n)dz^2$ where $z=s+\sqrt{-1}\theta$ and $$\phi(u_n):=\left|\frac{\partial u_n}{\partial s}\right|^2-\left|\frac{\partial u_n}{\partial \theta}\right|^2-2\sqrt{-1}\frac{\partial u_n}{\partial s}\cdot \frac{\partial u_n}{\partial \theta}.$$

Our first main result is the following
\begin{thm}\label{thm-main:01}
Let $\{u_n\}_{n=1}^\infty$ be a sequence of maps from degenerating tori $(T^2,g_n)$ to $(N,h)$ with tension field $\tau(u_n,g_n)$ such that $$E(u_n)\leq\Lambda<\infty,\ \ \lim_{n\to\infty}\rho_n\cdot \|\tau(u_n,g_n)\|_{L^2(T^2)}=0, \ \  \int_{T^2}dvol_{g_n}\equiv 1.$$  Then there exist finitely many bubbles $w^k:S^2\to N,\ k=1,...,m$, which are harmonic spheres, such that the following conclusions hold:
\begin{itemize}
\item[(1)] if $\lim_{n\to\infty}\frac{\|\tau(u_n,g_n)\cdot \frac{\partial u_n}{\partial s}\|_{L^1(T^2)}}{\rho_n}=0,$ then the following generalized energy identity holds
    $$\lim_{n\to\infty}E(u_n)=\sum_{i=1}^mE(w_i)+\lim_{n\to\infty}\frac{2\pi \alpha_n}{\rho_n^2}.$$

    \

   \item[(2)] if $\lim_{n\to\infty}\|\ |\phi(u_n)| \ \|_{L^1(T^2)}=0,$ then the following energy identity holds
   $$\lim_{n\to\infty}E(u_n)=\sum_{i=1}^mE(w_i).$$

\

\item[(3)] if
\begin{equation}\label{equ:17}
\lim_{n\to\infty}\left(\frac{\|\tau(u_n,g_n)\|_{L^2(T^2)}}{\rho_n}+\frac{\|\tau(u_n,g_n)\cdot \frac{\partial u_n}{\partial s}\|_{L^1(T^2)}}{\rho^3_n}\right)=0,
\end{equation} then away from the bubbles $\{w^i\}_{i=1}^m$, the asymptotic limit of $u_n$ are some geodesics on $N$. Moreover, the sum of the lengths of these geodesics is $$\lim_{n\to\infty}\sqrt{|\alpha_n|}\frac{\sqrt{2\pi^3}}{\rho^2_n}.$$
\end{itemize}
\end{thm}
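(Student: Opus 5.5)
The plan is to work on the long flat cylinder $Q_n=[-\frac{1}{2}\rho_n^{-1},\frac{1}{2}\rho_n^{-1}]\times \frac{\rho_n}{2\pi}S^1$ and rescale by $\lambda_n=2\pi/\rho_n$. Then $\theta\in S^1$ has unit period, and $s$ ranges over an interval of length $T_n:=\pi\lambda_n\rho_n^{-1}\sim 2\pi^2/\rho_n^2$. Under this conformal change the energy is invariant. The tension field transforms as $\tilde\tau_n=\lambda_n^{-2}\tau_n$, so that
\[
\|\tilde\tau_n\|_{L^2}=\frac{\rho_n}{2\pi}\|\tau_n\|_{L^2}\to 0,\qquad
\|\tilde\tau_n\cdot\partial_s\tilde u_n\|_{L^1}=\frac{\rho_n}{2\pi}\|\tau_n\cdot\partial_s u_n\|_{L^1}.
\]
This is the sense in which the hypotheses make $\tilde u_n$ an approximate harmonic map with vanishing $L^2$ tension on a cylinder of length $T_n\to\infty$.

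The first step is the Pohozaev identity of Lemma \ref{lem:pohozaev-constant}. Multiply the equation by $\partial_s u_n$ and integrate over $[0,s]\times S^1$; this shows that $\alpha_n$ is independent of $s$. In rescaled variables,
\[
\int_{\{s\}\times S^1}\bigl(|\partial_s\tilde u_n|^2-|\partial_\theta\tilde u_n|^2\bigr)\,d\theta=\tilde\alpha_n+2\int_{[0,s]\times S^1}\tilde\tau_n\cdot\partial_s\tilde u_n ,
\]
with $\tilde\alpha_n$ proportional to $\alpha_n/\rho_n$. Next I run the standard bubble-tree decomposition for approximate harmonic maps with $L^2$-bounded tension (Ding--Tian / Qing--Tian style). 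This gives finitely many harmonic spheres $w^i$, and after finitely many steps reduces the cylinder to finitely many neck regions $[a_n,b_n]\times S^1$. On these necks the angular energy $\sup_{s}\int_{[s,s+1]\times S^1}|\partial_\theta\tilde u_n|^2$ is small. The three conclusions then come from this decomposition as follows.

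\textbf{(1) and (2).} On a neck region, the small-energy $\epsilon$-regularity combined with the three-circle / ODE argument for $f(s)=\int_{\{s\}\times S^1}|\partial_\theta \tilde u_n|^2$ gives exponential decay of the $\theta$-energy. Concretely, $f''\geq f - C\|\tilde\tau_n\|_{L^2}^2$-type inequalities give $\int_{\rm neck}|\partial_\theta\tilde u_n|^2\to0$. Hence the neck energy equals $\frac12\int|\partial_s\tilde u_n|^2$. By the Pohozaev identity this is $\frac12\int(\tilde\alpha_n+o(1))\,ds$ up to the $\theta$-energy. The error term is bounded by $T_n\|\tilde\tau_n\cdot\partial_s\tilde u_n\|_{L^1}\sim \rho_n^{-1}\|\tau_n\partial_su_n\|_{L^1}$, which vanishes under hypothesis (1). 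The total neck length is $T_n-O(1)$, so the neck energy converges to $\lim \frac12\tilde\alpha_nT_n$, which I expect equals $\lim 2\pi\alpha_n/\rho_n^2$ after tracking constants. This gives (1).

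For (2), note that $|\tilde\alpha_n|$ is bounded by the average over $s$ of $\int_{\{s\}}|\phi|$, up to the same tension error. Hence $\tilde\alpha_nT_n\lesssim\|\phi(u_n)\|_{L^1}+{\rm error}\to0$. The tension error is controlled via $\rho_n\|\tau_n\|_{L^2}\to0$ together with Cauchy--Schwarz against the bounded energy; there I would average the identity over $s$ rather than use a pointwise bound. This gives (2).

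\textbf{(3).} Under \eqref{equ:17}, the error $\int|\tilde\tau_n\partial_s\tilde u_n|$ is $o(\rho_n^2)=o(1/T_n)$. Therefore $|\partial_s\tilde u_n|^2$ restricted to each circle is $\tilde\alpha_n+o(1/T_n)$ uniformly in $s$, while $|\partial_\theta\tilde u_n|\to0$ in $C^0$ on necks by decay plus $\epsilon$-regularity. Reparametrize each neck by $t=s/T_n\in[0,1]$ and set $v_n(t)=\tilde u_n(T_nt,\theta)$. Then $|v_n'|^2\approx\tilde\alpha_nT_n^2$ is bounded, and the equation becomes $v_n''+A(v_n)(v_n',v_n')=T_n^2\tilde\tau_n+T_n^2\cdot(\theta\text{-terms})$. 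The right-hand side tends to $0$ in $L^2$ because $\|\tau_n\|_{L^2}/\rho_n\to0$. Arzelà--Ascoli then gives a limit $v$ satisfying the geodesic equation with constant speed $\sqrt{\lim\tilde\alpha_nT_n^2}$. The sum of the lengths equals this speed times the total normalized length $1$, i.e. $\lim\sqrt{|\alpha_n|}\sqrt{2\pi^3}/\rho_n^2$ after bookkeeping.

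The main obstacle is the uniform $C^0$ control of $\partial_\theta\tilde u_n$ on necks whose length is $\sim\rho_n^{-2}$. This requires decay estimates that are quantitative in the tension norm, rather than merely $o(1)$, so I would first establish a decay lemma of the form $f(s)\le C(e^{-(s-a_n)}+e^{-(b_n-s)})+C\|\tilde\tau_n\|^2_{L^2}$ and track the constants through.
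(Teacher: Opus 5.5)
Your overall architecture is the paper's: rescale to a cylinder of length $T_n\asymp\rho_n^{-2}$, use the Pohozaev constant, decompose into bubbles and necks, show the angular energy on necks vanishes, and write neck energy $\approx\tilde\alpha_n\times$length. Part (1) goes through essentially as in the paper. One proviso: in your ODE for $f(s)=\int_{\{s\}\times S^1}|\partial_\theta\tilde u_n|^2$, the tension must enter through the localized quantity $\int_{\{s\}\times S^1}|\tilde\tau_n|^2$, because the global $T_n\|\tilde\tau_n\|_{L^2}^2\asymp\|\tau_n\|_{L^2}^2$ need not be small.

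\textbf{The gap in (3).} When $0<\mu<\infty$ one has $|\tilde\alpha_n|\asymp T_n^{-2}$, so every error must be $o(|\tilde\alpha_n|)$ per unit length. Your bounds ``$\int_{\{s\}\times S^1}|\partial_s\tilde u_n|^2=\tilde\alpha_n+o(1/T_n)$'' and ``$|\partial_\theta\tilde u_n|\to0$ in $C^0$'' say nothing at that scale. Moreover, $C^0$ control of $\nabla\tilde u_n$ is not available from $L^2$ tension.

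\emph{Missing ingredient.} Hypothesis \eqref{equ:17} does give $\|\tilde\tau_n\cdot\partial_s\tilde u_n\|_{L^1}=o(T_n^{-2})$. What is missing is the paper's Lemma \ref{lem:02}: $\int_{[s_0-k,s_0+k]\times S^1}|\partial_\theta\tilde u_n|^2=o(|\tilde\alpha_n|)$ for $s_0$ at distance $\ge T_n^{1/4}$ from the neck ends. The paper proves it from the decay Lemma \ref{lem:04} (local energy $\lesssim e^{-d/6}+|\alpha_n|+$ tension terms) plus a good-slice argument, and it then feeds the linear blow-up limit of Lemma \ref{lem:03}. Your closing decay lemma, stated in $L^2$ on circles, would also give this bound, since $e^{-cT_n^{1/4}}+\|\tilde\tau_n\|_{L^2}^2=o(T_n^{-2})$ under \eqref{equ:17}. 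But it only does so once you make the comparison with $\tilde\alpha_n$ explicit.

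\emph{Averaging in $\theta$.} Even with that bound, the term $T_n^2\partial_\theta^2\tilde u_n$ among your ``$\theta$-terms'' is not small. The paper removes it by working with the circle average $u_n^*=\frac{1}{2\pi}\int\tilde u_n\,d\theta$, whose second derivative sees only $\overline{A(\nabla\tilde u_n,\nabla\tilde u_n)}$ and $\overline{\tilde\tau_n}$. It then reparametrizes $u_n^*$ by arc length to get a $W^{2,1}$ bound.

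\textbf{The tension term in (3).} Your claim that the tension contribution vanishes is off by a power. With $t=s/T_n$, one has $\|T_n^2\tilde\tau_n\|_{L^2(dt\,d\theta)}=T_n^{3/2}\|\tilde\tau_n\|_{L^2}\asymp\|\tau_n\|_{L^2}/\rho_n^2$, and the $L^1$ norm can be of the same order. This is not just bookkeeping. Let $\gamma$ be a closed curve of length $L$ with constant geodesic curvature $\kappa\neq0$, such as a small circle in a round sphere. Then $u_n(s,\theta)=\gamma(L\rho_n s)$ satisfies all hypotheses of (3): $\tau_n\cdot\partial_s u_n\equiv0$ and $\|\tau_n\|_{L^2}=L^2\kappa\rho_n^2$. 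Yet the neck converges to $\gamma$, which is not a geodesic.

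The paper's estimate \eqref{inequ:14} has the same slip: it loses a factor $|\alpha_n|^{-1/4}$ when converting the arc-length integral back to the cylinder variable. So the geodesic conclusion genuinely requires $\|\tau_n\|_{L^2}=o(\rho_n^2)$. That is what the flow argument actually supplies, in Proposition \ref{prop:02} and in the proof of Theorem \ref{thm-main:02}(2).

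\textbf{Part (2).} Do not route this through $\tilde\alpha_n$. The error $T_n\|\tilde\tau_n\cdot\partial_s\tilde u_n\|_{L^1}$ is only bounded by $C\|\tau_n\|_{L^2}/\rho_n$, which need not vanish when merely $\rho_n\|\tau_n\|_{L^2}\to0$, and averaging in $s$ does not help. As in the paper, once the angular energy on necks vanishes, the neck energy is bounded directly by $\|\phi(u_n)\|_{L^1}+o(1)$.
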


\

\begin{rem}
In Theorem \ref{thm-main:01}, the length of geodesic can be finite or infinite and a geodesic of zero length means that all the bubbles are connected in the target manifold.
\end{rem}

\begin{rem}
The assumptions of Theorem \ref{thm-main:01} is more general and weaker than Theorem \ref{thm:03} at least by the following two aspects:
\begin{itemize}
  \item[(1)] Compared to the flow case where we can use an additional structure that the map will become more and more conformal along the flow, i.e. $\lim_{n\to\infty}\|\ |\phi(u_n)| \ \|_{L^1(T^2)}=0,$ the first conclusion can be applied for more generalized sequences.
  \item[(2)] If there is no bubble occurs at infinity time, then as in \cite{Ding-Li-Liu}, we can chose a time sequence going to infinity such that the condition \eqref{equ:17} automatically holds.
\end{itemize}
\end{rem}

\begin{rem}
When $\tau(u_n,g_n)\equiv 0$, i.e. $u_n$ is a sequence of harmonic maps, the results in  Theorem \ref{thm-main:01} were already proved in \cite{Zhu-3, Chen-Li-Wang}.
\end{rem}

As an application of Theorem \ref{thm-main:01}, we shall provide a new proof of Theorem \ref{thm:03} (or \cite[Theorem 4.1]{Ding-Li-Liu}). In fact, we shall improve the conclusions of Theorem \ref{thm:03} by giving a length formula.
\begin{thm}\label{thm-main:02}
Under the assumptions of Theorem \ref{thm:03}, if there exist $t_n\to\infty$ such that $g(t_n)$ does not converge in $\mathbf{g}$, then there exist a finite number of bubbles $\{w_i\}_{i=1}^m$ such that $$\lim_{n\to\infty}E(u(t_n),g(t_n))=\sum_{i=1}^mE(w_i).$$ Moreover, the following two alternatives hold:
\begin{itemize}
\item[(1)] if $\lim_{n\to\infty}\left(\frac{\|\tau(u(t_n),g(t_n))\|_{L^2(T^2)}}{\rho_n}+\frac{\|\tau(u(t_n),g(t_n))\cdot \frac{\partial u(t_n)}{\partial s}\|_{L^1(T^2)}}{\rho^3_n}\right)=0,$ then away from bubbles $\{w^i\}_{i=1}^m$, the asymptotic limit of $u_n$ are some geodesics on $N$. Moreover, the sum of the lengths of the geodesics is $$\lim_{n\to\infty}\sqrt{|\alpha_n|}\frac{\sqrt{2\pi^3}}{\rho^2_n}.$$

    \

\item[(2)]if no bubble occurs at infinity, i.e. $\lim_{t\to\infty}E(u(t),g(t))=0$, then as $t\to\infty$, the image of $u(t)$ must subconverge to either a point or a closed geodesic in $N$ with finite length $$\lim_{n\to\infty}\sqrt{|\alpha_n|}\frac{\sqrt{2\pi^3}}{\rho^2_n}.$$

\end{itemize}
\end{thm}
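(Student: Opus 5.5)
The proof will reduce Theorem \ref{thm-main:02} to Theorem \ref{thm-main:01}. To do this we need three things along the flow \eqref{equ:13}-\eqref{equ:14}: uniform energy bounds, smallness of the tension field, and asymptotic conformality. All of these come from the energy identity of the flow. Since \eqref{equ:13} is a (modified) negative gradient flow of $E(u,g)$ restricted to unit-area flat metrics, the energy is nonincreasing, so $E(u(t),g(t))\le E(u_0,g_0)=:\Lambda$. Moreover,
\begin{equation*}
\frac{d}{dt}E(u(t),g(t)) = -\int_{T^2}|\tau(u,g)|^2\,dvol_g - c(a,b)\left(\Big|\frac{da}{dt}\Big|^2+\Big|\frac{db}{dt}\Big|^2\right)
\end{equation*}
for a positive weight $c(a,b)$ determined by the metric on $\mathbf{g}$. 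Integrating in time gives
\begin{equation*}
\int_0^\infty\|\tau\|_{L^2}^2\,dt<\infty \quad\text{and}\quad \int_0^\infty c(a,b)\left(|a'|^2+|b'|^2\right)dt<\infty .
\end{equation*}
Given the sequence $t_n\to\infty$ along which $g(t_n)$ degenerates, I would perturb it (staying within a bounded distance, using the fact that $g$ changes slowly when $\int|g'|^2$ is small on unit intervals) to a new sequence, still called $t_n$. The goal is that $\|\tau(u(t_n),g(t_n))\|_{L^2}\to0$ and that the metric velocity tends to zero, which is possible by a standard measure argument on $[t_n,t_n+1]$.

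The key observation is that the equations for $a'$ and $b'$ are precisely components of the $L^1$-pairing of the Hopf differential $\phi(u)$ against the holomorphic quadratic differentials of $(T^2,g)$. In the cylinder coordinates $(s,\theta)$ adapted to $g_n$, the quantity $\frac{db}{dt}$ equals, up to a factor, $\int_{T^2}\mathrm{Re}\,\phi(u)$, and $\frac{da}{dt}$ corresponds to $\int_{T^2}\mathrm{Im}\,\phi(u)$. So vanishing metric velocity gives that the average of $\phi(u_n)$ tends to zero.

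To upgrade this to $\||\phi(u_n)|\|_{L^1}\to0$, I would use $\bar\partial\phi(u)=2\,\tau\cdot\partial_z u$ and write $\phi(u_n)$ as its average plus a correction controlled by $\|\tau\|_{L^2}\|\nabla u\|_{L^2}$ through elliptic estimates on the torus. Some care is needed because the degenerating geometry could introduce constants depending on $\rho_n$. One can alternatively use the Pohozaev constant of Lemma \ref{lem:pohozaev-constant}: $\alpha_n$ is the $\theta$-average of $\mathrm{Re}\,\phi$ corrected by the $\tau$-term. This would at least give item (1) of Theorem \ref{thm-main:01} together with $\lim 2\pi\alpha_n/\rho_n^2=0$, provided one shows that $\|\tau\cdot\partial_s u\|_{L^1}/\rho_n\to0$. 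That last condition I would obtain from Cauchy–Schwarz with $\|\tau\|_{L^2}\to0$ after a suitable choice of $t_n$ that makes $\|\tau\|_{L^2}$ small relative to $\rho_n$. Showing that the correction terms in $\phi$ vanish at the right rate relative to the degeneration $\rho_n\to0$ is the main obstacle. The hypothesis $\lim\rho_n\|\tau\|_{L^2}=0$ is then immediate.

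With these inputs the conclusions follow.
\begin{itemize}
\item[(a)] The energy identity $\lim E=\sum E(w_i)$ follows from Theorem \ref{thm-main:01}(2), or from item (1) with vanishing $\alpha$-term.
\item[(b)] Alternative (1) is literally the hypothesis \eqref{equ:17} of Theorem \ref{thm-main:01}(3), which yields the geodesic necks and the length formula $\lim\sqrt{|\alpha_n|}\sqrt{2\pi^3}/\rho_n^2$.
\item[(c)] For alternative (2), if $E\to0$ there are no bubbles, so the whole torus is a neck. As remarked after Theorem \ref{thm-main:01}, we can choose $t_n$ so that \eqref{equ:17} holds, using the integrability of $\|\tau\|^2_{L^2}$ and the fact that $\rho_n$ decays at a controlled rate, which comes from the ODE for $b$. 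Applying part (3), the image converges to a geodesic whose ends match because the domain is a closed torus, so the geodesic is closed. Its length is given by the formula and is finite since $E\to0$ forces $|\alpha_n|\lesssim\rho_n^4\cdot O(1)$; in the degenerate case of length zero the limit is a point.
\end{itemize}
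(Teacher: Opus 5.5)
Your skeleton matches the paper's: alternative (1) is literally hypothesis \eqref{equ:17} of Theorem~\ref{thm-main:01}(3) for the given $t_n$ ($E\le\Lambda$ holds by monotonicity, and $\rho_n\|\tau\|_{L^2}\to0$ is implied), and alternative (2) comes from selecting times at which \eqref{equ:17} holds. The energy identity, however, is not established by what you wrote. Both of your routes need $\|\tau\|_{L^2}=o(\rho_n)$ at the chosen times. Inverting $\bar\partial$ on mean-zero functions on the degenerating torus costs a factor comparable to $\rho_n^{-1}$ (coming from the $\theta$-independent mode), and the $\alpha_n$-route needs $\|\tau\cdot\partial_su\|_{L^1}=o(\rho_n)$. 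The soft information $\int_0^\infty\|\tau\|_{L^2}^2\,dt<\infty$ and $\int_0^\infty|\int_{T^2}\phi(u)|^2\,dt<\infty$ does not relate the size of $\|\tau\|_{L^2}$ near $t_n$ to the rate at which $\rho_n\to0$, so the ``suitable choice of $t_n$'' is not available. Moreover, replacing $t_n$ by a perturbed sequence detaches the bubbles from the sequence on which alternative (1) is hypothesized. The paper does not re-derive this identity. It is conclusion (ii) of Theorem~\ref{thm:03}, a known result whose hypotheses are in force, and the paper treats asymptotic conformality along the flow as known from \cite{Ding-Li-Liu}. You should simply quote it.

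The second gap is in alternative (2). Your claim that $E\to0$ forces $|\alpha_n|\lesssim\rho_n^4$ is false. Integrating the $s$-independent quantity $\alpha_n$ over the cylinder gives only $|\alpha_n|\lesssim\rho_n^2E+\rho_n\|\tau\cdot\partial_su\|_{L^1}$, so $E\to0$ yields only $|\alpha_n|=o(\rho_n^2)$. This bound is essentially attained when there are no bubbles, by Theorem~\ref{thm-main:01}(1), so the length $\sqrt{|\alpha_n|}\sqrt{2\pi^3}/\rho_n^2$ may well be infinite. For the same reason, ``integrability of $\|\tau\|^2_{L^2}$ plus the ODE for $b$'' does not give \eqref{equ:17}, whose second term requires $\|\tau\|_{L^2}\|\partial_su\|_{L^2}=o(\rho_n^3)$.

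What is needed, and what the paper imports from \cite{Ding-Li-Liu}, is a sequence with $\|\tau(u(t_n),g(t_n))\|_{L^2}=o(\rho(t_n)^2)$ and $E(u(t_n),g(t_n))\le C\rho(t_n)^2$. The mechanism is as follows. With no bubbles, the tangential energy $\int|\partial_\theta u|^2$ is of higher order than $E$, up to tension terms absorbed by $-\|\tau\|_{L^2}^2$; on the torus there are no boundary terms. Hence $|\int\phi|\ge(2-o(1))E$, and the flow's energy identity gives $E'\le-(1-o(1))E^2$, so $E\le C/t$. Then $|b'/b|\le E$ gives $b\le Ct$, i.e.\ $\rho\ge ct^{-1/2}$, and therefore $E\le C\rho^2$. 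Finally, suppose $\|\tau\|_{L^2}\ge\delta\rho^2$ for all large $t$. Then the decay improves to $E\le(1-\epsilon_0)/t$, hence $\rho^2\ge ct^{\epsilon_0-1}$. Choosing $t_n$ with $-E'(t_n)\le Ct_n^{-2}$ then gives $\|\tau(t_n)\|_{L^2}\le Ct_n^{-1}=o(\rho(t_n)^2)$, a contradiction. With these two bounds, \eqref{equ:17} follows by Cauchy--Schwarz and $\sqrt{|\alpha_n|}/\rho_n^2$ stays bounded. That boundedness is exactly what makes the limiting closed geodesic have finite length.
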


\

The second goal of this paper is to seek minimal cylinders in compact Riemannian manifold with free boundary. Roughly speaking, we shall extend Theorem \ref{thm:03} to the free boundary case.

Let $M=S^1\times [0,\pi]$ be a cylinder. Let $K\subset N$ be a compact smooth submanifold. One may find the negative $L^2$ gradient flow of $E(u,g)$ over the class $C(K)$ given by
\begin{align}
\begin{cases}
\frac{\partial u}{\partial t}=\tau(u,g),\ \ &in\ \ M\times [0,T),\\
\frac{\partial g}{\partial t}=u^*h-e(u,g)g,\ \ &in\ \ M\times [0,T),\\
u(x,t)\in K,\ \ \frac{\partial u}{\partial \overrightarrow{n}}\bot T_uK,\ \ &on\ \ \partial M\times [0,T),
\end{cases}
\end{align} where $\overrightarrow{n}$ is the unit outer normal vector.  Inspired by the closed case in \cite{Ding-Li-Liu}, we consider the following modified evolution system that
\begin{align}\label{equ:15}
\begin{cases}
\frac{\partial u}{\partial t}=\tau(u,g),\ \ &in \ \ M\times [0,T),\\
\frac{da}{dt}=-b\int_M \left[a|\frac{\partial u}{\partial x^1}|^2-\frac{\partial u}{\partial x^1}\cdot \frac{\partial u}{\partial x^2}\right]dM,\ \ &in \ \ M\times [0,T),\\
\frac{db}{dt}=-\frac{1}{2}\int_M \left[(b^2-a^2)|\frac{\partial u}{\partial x^1}|^2-|\frac{\partial u}{\partial x^2}|^2+2a\frac{\partial u}{\partial x^1}\cdot \frac{\partial u}{\partial x^2}\right]dM,\ \ &in \ \ M\times [0,T),\\
u(x,t)\in K,\ \ \frac{\partial u}{\partial \overrightarrow{n}}\bot T_uK,\ \ &on\ \ \partial M\times [0,T),
\end{cases}
\end{align} with the initial data
\begin{equation}\label{equ:16}
u_{t=0}=u_0,\ \ g_{t=0}=g_0=g_{a_0,b_0},
\end{equation} where $g=g_{a,b}\in\mathbf{g}$.

\

In order to study the convergence of flow \eqref{equ:15}-\eqref{equ:16} near infinity time, we need to study the asymptotic properties near the free boundary. Let $(M,g_n)$ be a sequence of degenerated cylinders with volume $1$. Compared to the interior case, as discussed in \cite{Liu-Song-Zhu}, we need to consider two types of boundary degeneration which is more complicated.

\

\noindent{\bf Type $\mathbf{I}$ boundary degeneration}: collapsing a simple \textbf{closed} geodesic $\gamma_n$ with length $\rho_n$. In this case, we can also see $(M,g_n)$ as a cylinder $\left([-\frac{1}{2}\rho_n^{-1},\frac{1}{2}\rho_n^{-1}]\times \rho_n\frac{1}{2\pi}S^1, ds^2+d\theta^2\right)$. For a sequence of maps $u_n:(M,g_n)\to (N,h)$ with tension fields $\tau(u_n,g_n)$, we define a Pohozaev type constant $\alpha_n$ as in \eqref{equ:01} for interior case.

\

\noindent{\bf Type $\mathbf{II}$ boundary degeneration}: collapsing a simple geodesic $\gamma_n $ which connects two points on the boundary $S^1\times \{0,\pi\}$. In this case, we can see $(M,g_n)$ as  $\left([-\frac{1}{2}\rho_n^{-1},\frac{1}{2}\rho_n^{-1}]\times \rho_n\frac{1}{\pi}[0,\pi],ds^2+d\theta^2\right)$. For a sequence of maps $u_n:(M,g_n)\to (N,h)$ with tension fields $\tau(u_n,g_n)$ and with free boundary condition $u_n|_{\partial M}\subset K$, where the boundary $\partial M=[-\frac{1}{2}\rho_n^{-1},\frac{1}{2}\rho_n^{-1}]\times \{0,\pi\}$,  we can also define a Pohozaev type constant
\begin{equation}\label{equ:02}
\alpha_n:=\frac{\rho_n}{2\pi}\left(\int_{\{s\}\times [0,\pi]}\left (|\partial_s u_n|^2-|\partial_\theta u_n|^2\right )d\theta -2\int_{[0,s]\times [0,\pi]}\tau( u_n,g_n)\cdot \partial_s u_n dsd\theta\right),   (s,\theta)\in (M,g_n).\end{equation}
See Lemma \ref{lem:pohozaev-constant} for more details.

\

For a sequence of maps with uniformly bounded energy, when boundary degeneration occurs, we prove the following:
\begin{thm}\label{thm-main:03}
Let $\{u_n\}_{n=1}^\infty$ be a sequence of maps from $(S^1\times [0,\pi],g_n)$ to $(N,h)$ with tension field $\tau(u_n,g_n)$ and with free boundaries $u_n(S^1\times \{0,\pi\})\subset K$, such that $$E(u_n)\leq\Lambda,\ \ \lim_{n\to\infty}\rho_n\cdot \|\tau(u_n,g_n)\|_{L^2(M)}=0,\ \ \int_{S^1\times [0,\pi]}dvol_{g_n}\equiv 1,$$ where $\rho_n$ is the length of the shortest geodesic $\gamma_n$ on $(S^1\times [0,\pi],g_n)$ which goes to zero as $n\to\infty$. Then there exist finitely many bubbles $\{w^k\}_{ k=1}^m$, where each $w^k$ is either a harmonic map from $S^2$ to $N$ or a harmonic map from the unit Euclidean disk $D_1(0)$ to $N$ with free boundary $w^k(\partial D_1(0))\subset K$, such that  the following  conclusions hold:

\

\begin{itemize}
\item[(1)] {\bf Type $\mathbf{I}$  boundary degeneration}: collapsing a simple closed geodesic.

\

\begin{itemize}
\item[(1-1)] If $\lim_{n\to\infty}\frac{\|\tau(u_n,g_n)\cdot \frac{\partial u_n}{\partial s}\|_{L^1(M)}}{\rho_n}=0,$ then the following generalized energy identity holds
     $$\lim_{n\to\infty}E(u_n)=\sum_{i=1}^mE(w_i)+\lim_{n\to\infty}\frac{2\pi\alpha_n}{\rho_n^2},$$
     where $\alpha_n$ is the Pohozaev type constant defined by \eqref{equ:01}.

\

\item[(1-2)] If  $\lim_{n\to\infty}\|\ |\phi(u_n)| \ \|_{L^1(M)}=0,$ then the following energy identity holds
$$\lim_{n\to\infty} E(u_n)=\sum_{i=1}^mE(w_i).$$

\

\item[(1-3)] If $\lim_{n\to\infty}\left(\frac{\|\tau(u_n,g_n)\|_{L^2(M)}}{\rho_n}+\frac{\|\tau(u_n,g_n)\cdot \frac{\partial u_n}{\partial s}\|_{L^1(M)}}{\rho^3_n}\right)=0,$ then away from the bubbles $\{w^i\}_{i=1}^m$, the asymptotic limit of $u_n$ are some geodesics on $N$. Moreover, the sum of the lengths of these geodesics is
    $$\lim_{n\to\infty}\sqrt{|\alpha_n|}\frac{\sqrt{2\pi}}{\rho^2_n}.$$
\end{itemize}

\

\item[(2)] {\bf Type $\mathbf{II}$  boundary degeneration}: collapsing a simple geodesic $\gamma_n $ which connects two points on the boundary $S^1\times \{0,\pi\}$.

\

\begin{itemize}
\item[(2-1)] If $\lim_{n\to\infty}\frac{\|\tau(u_n,g_n)\cdot \frac{\partial u_n}{\partial s}\|_{L^1(M)}}{\rho_n}=0,$ then the following weak energy identity holds
     $$\lim_{n\to\infty}E(u_n)=\sum_{i=1}^mE(w_i)+\lim_{n\to\infty}\frac{\pi\alpha_n}{\rho_n^2},$$
     where $\alpha_n$ is the Pohozaev type  constant defined by \eqref{equ:02}.

\

    \item[(2-2)] If  $\lim_{n\to\infty}\|\ |\phi(u_n)| \ \|_{L^1(M)}=0,$ then the following energy identity holds
    $$\lim_{n\to\infty}E(u_n)=\sum_{i=1}^mE(w_i).$$

\

\item[(2-3)] If $\lim_{n\to\infty}\left(\frac{\|\tau(u_n,g_n)\|_{L^2(M)}}{\rho_n}+\frac{\|\tau(u_n,g_n)\cdot \frac{\partial u_n}{\partial s}\|_{L^1(M)}}{\rho^3_n}\right)=0,$ then away from bubbles $\{w^i\}_{i=1}^m$, the asymptotic limit of $u_n$ are some geodesic-like curves on $K$. Moreover, the sum of the lengths of the geodesic-like curves is
    $$\lim_{n\to\infty}\sqrt{|\alpha_n|}\frac{\sqrt{\pi}}{\rho^2_n}.$$
    Here, a curve $\gamma\subset K$ denoted by $u(s)$ is called \textbf{a geodesic-like curve} iff it satisfies the following equation
    \begin{equation}\label{def:geodesic-like-curve}
    \frac{d^2}{ds^2}u(s)=-A(u)(\frac{d}{ds} u,\frac{d}{ds} u) +\frac{1}{2}D^2\sigma(u)(\frac{d}{ds} u,\frac{d}{ds} u),\end{equation} where $s$ is the arc length parameter and $\sigma$ is the evolution map corresponding to $K$ defined by \eqref{def:01}.
\end{itemize}

\end{itemize}
\end{thm}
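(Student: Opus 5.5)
The proof follows the scheme of the closed case, Theorem \ref{thm-main:01}. I would reduce to a neck analysis on long flat cylinders and half-strips, with the Pohozaev type constant $\alpha_n$ of Lemma \ref{lem:pohozaev-constant} doing the bookkeeping.

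\textbf{Bubble extraction.} First, away from the degenerating geodesic the metrics $g_n$ converge on compact pieces. Since $\rho_n\|\tau(u_n,g_n)\|_{L^2}\to 0$ and the tension scales favourably under the conformal change to the flat model, the standard $\epsilon$-regularity for approximate harmonic maps applies: interior estimates, and near $S^1\times\{0,\pi\}$ the free-boundary version obtained by the reflection across $K$ via the involution $\sigma$ of \eqref{def:01}. This lets me extract finitely many bubbles in the usual way. These are harmonic spheres at interior points and free-boundary harmonic disks at boundary points. The energy identity with no neck loss on each bounded region is the known result for approximate harmonic maps with $L^2$ tension.

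\textbf{Neck energy in (1-1) and (2-1).} What remains is the long neck. For Type I it is $[-\frac12\rho_n^{-1},\frac12\rho_n^{-1}]\times\frac{\rho_n}{2\pi}S^1$. For Type II it is the strip of width $\rho_n$ with $u_n(\partial)\subset K$. After rescaling by $\rho_n^{-1}$, these become $[-T_n,T_n]\times S^1$ or $[-T_n,T_n]\times[0,\pi]$ with $T_n\sim\rho_n^{-2}$. On such a piece, integrate the constancy of $\alpha_n$ in $s$. The identity reads
\[
\int_{\{s\}}(|\partial_su_n|^2-|\partial_\theta u_n|^2)=\frac{2\pi\alpha_n}{\rho_n}+2\int_{[0,s]}\tau\cdot\partial_su_n .
\]
In the Type II case the free boundary condition kills the boundary term in the Pohozaev computation, because $\partial_\theta u_n\perp T K$ while $\partial_s u_n\in TK$; this is why $\alpha_n$ is constant in both cases. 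Integrating over $s$ gives $\int(|\partial_s u|^2-|\partial_\theta u|^2)\approx 2\pi\alpha_n/\rho_n^2$ for Type I and $\pi\alpha_n/\rho_n^2$ for Type II (the factor halves with the width), up to an error $\lesssim \rho_n^{-1}\|\tau\cdot\partial_su\|_{L^1}\to 0$.

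Next I need the angular part to vanish: $\int_{\rm neck}|\partial_\theta u_n|^2\to 0$. I would prove this through the exponential-decay / three-circle type estimate for $\theta$-oscillation on long cylinders with small energy per unit length. In Type II this uses Neumann-reflected Fourier modes, whose lowest nonzero mode is still controlled. The tension contribution is absorbed using $\rho_n\|\tau\|_{L^2}\to0$. Together these give (1-1) and (2-1).

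\textbf{Conformal case (1-2), (2-2).} Here $|\alpha_n|\rho_n^{-2}$ is bounded by $\|\phi(u_n)\|_{L^1}$ plus the $\tau$-error, both of which tend to zero. This gives the energy identity directly.

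\textbf{Limit curves in (1-3), (2-3).} Under the stronger hypotheses I reparametrize the neck by $t=s\rho_n^2$ (up to constants) on $[-\frac12,\frac12]$ and consider $v_n(t)$, the $\theta$-average of $u_n$. The $\theta$-oscillation decay shows that $u_n$ is close to $v_n$. The equation for $v_n$ is $v_n''=-\overline{A(u)(\nabla u,\nabla u)}+\rho_n^{-4}\bar\tau$-type terms, which are $o(1)$ by the hypothesis on $\|\tau\|_{L^2}/\rho_n$. Moreover $|v_n'|^2$ is asymptotically constant, equal to $|\alpha_n|$ times the normalization, by the Pohozaev identity and the hypothesis on $\|\tau\cdot\partial_su\|_{L^1}/\rho_n^3$. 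Hence $v_n$ converges in $C^1$ to a constant-speed curve, and integrating the speed gives the stated length formulas $\sqrt{|\alpha_n|}\sqrt{2\pi}/\rho_n^2$ and $\sqrt{|\alpha_n|}\sqrt{\pi}/\rho_n^2$ respectively. In Type I the limit satisfies the geodesic equation in $N$. In Type II, work with the reflected map $\tilde u$ across $\theta=0,\pi$ using $\sigma$; it satisfies an elliptic system whose nonlinearity involves $D^2\sigma$. Averaging then produces exactly \eqref{def:geodesic-like-curve}, and $v_\infty\subset K$ because the oscillation in $\theta$ vanishes while the boundary values lie on $K$.

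\textbf{Main obstacle.} I expect the main difficulty to be the uniform decay of $\int|\partial_\theta u_n|^2$ on the Type II strip in the presence of tension and the reflected nonlinearity. I would first attempt a second-order ODE inequality for $f(s)=\int_{\{s\}}|\partial_\theta u|^2$ of the form $f''\ge f-$(error), with the error from $\tau$ controlled in $L^1$.
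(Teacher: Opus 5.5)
Your overall architecture is the paper's: rescale to $[-T_n,T_n]\times S^1$ or $[-T_n,T_n]\times[0,\pi]$ with $T_n\sim\rho_n^{-2}$, split into bubble regions and no-concentration necks, use the constancy of $\alpha_n$ together with vanishing angular energy, then average the $\sigma$-reflected map in $\theta$. The step that fails is in (1-3)/(2-3), where you declare the $\bar\tau$-terms in the equation for the averaged curve to be $o(1)$ ``by the hypothesis on $\|\tau\|_{L^2}/\rho_n$''. This cannot be repaired under that hypothesis.

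Write $u_n^*$ for the $\theta$-average (your $v_n$). On the rescaled neck $|\dot u_n^*|\approx c\sqrt{|\alpha_n|}$ and $\sqrt{|\alpha_n|}\,T_n\to\mu$. So the tension can rotate the unit tangent by as much as $\int|\overline{\hat\Gamma}|\,|\dot u_n^*|^{-1}dt\lesssim|\alpha_n|^{-1/2}T_n^{1/2}\|\tau\|_{L^2([-T_n,T_n]\times[0,\pi])}\sim\rho_n^{-2}\|\tau(u_n,g_n)\|_{L^2(M)}$. The hypothesis only makes this $o(\rho_n^{-1})$.

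The bound is sharp. Take $N=T^3$ flat and $K=T^2\times\{p\}$. Then $\sigma$ is the restriction of a linear reflection, so \eqref{def:geodesic-like-curve} is just the geodesic equation. Let $\phi$ be a closed constant-speed curve in $K$ that is not a geodesic, and put $u_n(s,\theta)=\phi(\rho_n s)$ on the Type II strip. Then:
\begin{itemize}
\item $\partial_\theta u_n=0$, so the free boundary condition holds;
\item $\tau\cdot\partial_s u_n\equiv 0$;
\item $E(u_n)=O(\rho_n^2)$;
\item $\|\tau(u_n,g_n)\|_{L^2}=\rho_n^2\|\nabla_{\phi'}\phi'\|_{L^2}$.
\end{itemize}
So every hypothesis of (2-3) holds, yet the image is $\phi$ for every $n$. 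A semicircular arc meeting $K$ orthogonally at both ends does the same for (1-3).

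The paper's proof has the same slip in \eqref{inequ:14}. Even accounting for $ds=|\dot u_n^*|dt$, the quantity $\frac{C\sqrt{s_1}}{|\alpha_n|}\big(\int_0^{s_1}\!\int_0^\pi|\tau|^2\big)^{1/2}$ is only $\le C|\alpha_n|^{-3/4}\|\tau\|_{L^2}$, not $\le C|\alpha_n|^{-1/2}\|\tau\|_{L^2}$. So this is a defect of the statement, not of your route. Under $\|\tau(u_n,g_n)\|_{L^2}=o(\rho_n^2)$, which is what Proposition \ref{prop:02} actually supplies for the flow, the term is indeed negligible.

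Separately, you are missing the scale-sensitive estimate on which the curve analysis rests. To get $|v_n'|$ asymptotically constant (and $\alpha_n>0$) from the Pohozaev identity, you need, uniformly for $t$ at distance $\ge T_n^{1/4}$ from the ends, both $\int_{[t-k,t+k]\times[0,\pi]}|\partial_\theta u_n|^2=o(|\alpha_n|)$ and $\int_{[t-k,t+k]\times[0,\pi]}|\nabla u_n|^2=O(|\alpha_n|)$. Since $|\alpha_n|\sim\mu^2T_n^{-2}$, the statement $\int_{\mathrm{neck}}|\partial_\theta u_n|^2\to0$ that you prove for (1-1)/(2-1) says nothing here.

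The paper gets this from three results:
\begin{itemize}
\item Lemma \ref{lem:04};
\item the two-step Lemma \ref{lem:02}: a bound $C|\alpha_n|$ on windows of length $s_n\to\infty$, then Fubini to choose good slices and a second pass of the test-function estimate;
\item the blow-up Lemma \ref{lem:03}.
\end{itemize}
It also shows the oscillation over the end regions of length $T_n^{1/4}$ vanishes, as in Proposition \ref{prop:03}, and uses arc length so that $\mu=\infty$ is covered. Your global $C^1$ convergence in $t\sim\rho_n^2 s$ handles neither.

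An exponential-decay (three-circle) argument could supply the local estimate. Decay from the ends beats $T_n^{-2}$ at distance $T_n^{1/4}$, and the tension contributes $O(\|\tau\|_{L^2}^2)=o(T_n^{-2})$ on the rescaled neck. But you would have to prove decay of the local angular energy with the nonlinearity absorbed. Your $f''\ge f-(\text{error})$ plan must then cope with $\nabla u_n\notin L^\infty$ and with the jump of $\Upsilon_{\hat u_n}$ across $\theta=0,\pi$. The paper sidesteps both by testing \eqref{equ:11} against $\hat u_n-\hat u_n^*$ and using only small oscillation and Poincar\'e.

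Finally, in (1-2)/(2-2) do not pass through $\alpha_n$: its error term $\rho_n^{-1}\|\tau\cdot\partial_s u_n\|_{L^1}$ is not assumed small there. Bound the neck energy directly by $\|\phi(u_n)\|_{L^1}+2\int|\partial_\theta u_n|^2$.
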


\

We want to remark that all conditions of Theorem \ref{thm-main:03} hold automatically for zero tension fields, i.e. $\tau(u_n,g_n)\equiv 0$. When $u_n$ is a harmonic map sequence, the generalized energy identity was established by \cite{Liu-Song-Zhu}. Here, as a special case of  Theorem \ref{thm-main:03}, we can further characterize the limit of degenerate necks by some curves in a geometric manner. Precisely, we have the following corollary.

\

\begin{cor}
Let $\{u_n\}_{n=1}^\infty$ be a sequence of harmonic maps from $(S^1\times [0,\pi],g_n)$ to $(N,h)$  with free boundaries $u_n(S^1\times \{0,\pi\})\subset K$, such that $$E(u_n)\leq\Lambda,\ \  \int_{S^1\times [0,\pi]}dvol_{g_n}\equiv 1,$$ where $\rho_n$ is the length of the shortest geodesic $\gamma_n$ on $(S^1\times [0,\pi],g_n)$ which goes to zero as $n\to\infty$. Then there exist finitely many bubbles $\{w^k\}_{ k=1}^m$, where each $w^k$ is either a harmonic map from $S^2$ to $N$ or a harmonic map from the unit Euclidean disk $D_1(0)$ to $N$ with free boundary $w^k(\partial D_1(0))\subset K$, such that  the following  conclusions hold:

\

\begin{itemize}
\item[(1)] {\bf Type $\mathbf{I}$  boundary degeneration}: the following generalized energy identity holds     $$\lim_{n\to\infty}E(u_n)=\sum_{i=1}^mE(w_i)+\lim_{n\to\infty}\frac{2\pi\alpha_n}{\rho_n^2},$$
     where $\alpha_n$ is the Pohozaev type constant defined by \eqref{equ:01}.
Away from the bubbles $\{w^i\}_{i=1}^m$, the asymptotic limit of $u_n$ are some geodesics on $N$. Moreover, the sum of the lengths of these geodesics is
    $\lim_{n\to\infty}\sqrt{|\alpha_n|}\frac{\sqrt{2\pi}}{\rho^2_n}.$

\

\item[(2)] {\bf Type $\mathbf{II}$  boundary degeneration}: the following weak energy identity holds
     $$\lim_{n\to\infty}E(u_n)=\sum_{i=1}^mE(w_i)+\lim_{n\to\infty}\frac{\pi\alpha_n}{\rho_n^2},$$
     where $\alpha_n$ is the Pohozaev type  constant defined by \eqref{equ:02}.
Away from bubbles $\{w^i\}_{i=1}^m$, the asymptotic limit of $u_n$ are some geodesic-like curves on $K$. Moreover, the sum of the lengths of the geodesic-like curves is
    $\lim_{n\to\infty}\sqrt{|\alpha_n|}\frac{\sqrt{\pi}}{\rho^2_n}.$

\end{itemize}
\end{cor}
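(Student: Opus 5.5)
The corollary is the special case $\tau(u_n,g_n)\equiv 0$ of Theorem \ref{thm-main:03}, so the plan is to check that every hypothesis of that theorem holds in this case and then read off the conclusions.

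First, the standing assumptions. We have $E(u_n)\le\Lambda$ and the volume normalization $\int dvol_{g_n}\equiv1$. Since the tension field vanishes, $\rho_n\|\tau(u_n,g_n)\|_{L^2(M)}=0$, so the hypothesis $\lim_{n\to\infty}\rho_n\|\tau(u_n,g_n)\|_{L^2(M)}=0$ holds trivially. The bubble decomposition of Theorem \ref{thm-main:03} therefore applies and produces the finitely many bubbles $\{w^k\}_{k=1}^m$, each a harmonic sphere or a free boundary harmonic disk with boundary on $K$.

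Second, the extra hypotheses of the subcases. Since $\tau\equiv 0$, both $\|\tau\cdot\partial_s u_n\|_{L^1(M)}/\rho_n$ and $\|\tau\|_{L^2(M)}/\rho_n+\|\tau\cdot\partial_s u_n\|_{L^1(M)}/\rho_n^3$ vanish identically. Hence the hypotheses of (1-1) and (1-3) hold in the Type I case, and those of (2-1) and (2-3) hold in the Type II case. Applying (1-1) and (1-3) gives the generalized energy identity with the term $2\pi\alpha_n/\rho_n^2$, the convergence of the necks to geodesics on $N$, and the length formula $\lim\sqrt{|\alpha_n|}\sqrt{2\pi}/\rho_n^2$. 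Applying (2-1) and (2-3) gives the weak energy identity with the term $\pi\alpha_n/\rho_n^2$, the convergence of the necks to geodesic-like curves on $K$, and the length formula $\lim\sqrt{|\alpha_n|}\sqrt{\pi}/\rho_n^2$.

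Third, the definition of $\alpha_n$ simplifies when $\tau\equiv 0$. The integral term in \eqref{equ:01} and \eqref{equ:02} drops out, leaving
\[
\alpha_n=\frac{\rho_n}{2\pi}\int_{\{s\}}\left(|\partial_s u_n|^2-|\partial_\theta u_n|^2\right)d\theta .
\]
This is independent of $s$ by Lemma \ref{lem:pohozaev-constant}. In the harmonic case this also follows directly from the holomorphicity of the Hopf differential, together with the free boundary condition in Type II: at $\theta\in\{0,\pi\}$ the vector $\partial_\theta u_n$ is normal to $K$ while $\partial_s u_n$ is tangent to $K$, so $\partial_s u_n\cdot\partial_\theta u_n=0$ on the boundary.

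No step is a real obstacle here. The one point to handle carefully is that the limits appearing in the conclusions, for example $\lim \alpha_n/\rho_n^2$, need only exist after passing to a subsequence. This is consistent with how Theorem \ref{thm-main:03} is stated, and it is guaranteed because $E(u_n)\le\Lambda$ bounds $|\alpha_n|/\rho_n^2$: integrating the constant $\alpha_n$ in $s$ over the neck, whose length is of order $\rho_n^{-1}$, gives $2\pi|\alpha_n|\rho_n^{-2}\lesssim E(u_n)$.
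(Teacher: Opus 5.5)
Your proposal is correct and is the paper's own argument: the paper obtains this corollary as the special case $\tau(u_n,g_n)\equiv 0$ of Theorem \ref{thm-main:03}, where all the hypotheses of (1-1), (1-3), (2-1) and (2-3) hold trivially. Your subsequence remark is right for $\alpha_n/\rho_n^2$, but note that $\sqrt{|\alpha_n|}/\rho_n^2$ is only controlled in $[0,\infty]$, so the length limit may be infinite, which the paper explicitly allows.
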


\

As an application of Theorem \ref{thm-main:03} to the flow system \eqref{equ:15}-\eqref{equ:16}, we have
\begin{thm}\label{thm-main:04}
Suppose $(u,g)$ is a solution of \eqref{equ:15}-\eqref{equ:16} on $(S^1\times [0,\pi])\times (0,\infty)$ with initial data $(u_0,g_0)$. Then one of the following two cases will happen:
\begin{itemize}
\item[(i)] There exists $t_i\to\infty$ such that $g(t_i)\to g_\infty\in\mathbf{g}$ and $u(t_i)$ converges strongly to $u_\infty$ except at finitely points. There may exist a finite number of bubbles $\{w_i\}_{i=1}^m$ where $w^i$ is either a harmonic map from $S^2$ to $N$ or a harmonic map from unit Euclidean disk $D_1(0)$ to $N$ with free boundary $w^k(\partial D_1(0))\subset K$,  such that $$\lim_{t\to\infty}E(u(t),g(t))=E(u_\infty,g_\infty)+\sum_{i=1}^mE(w_i).$$ Moreover, $u_\infty$ has the same conjugacy class of $u_0$ and it is a branched minimal cylinder with free boundaries $u_\infty(S^1\times \{0,\pi\})\subset K$ unless it is a constant.

    \

\item[(ii)] There exist $t_n\to\infty$ such that $g(t_n)$ does not converge in $\mathbf{g}$, i.e. there exists a geodesic $\gamma_n\subset (S^1\times [0,\pi],g(t_n))$ with length $\rho_n$ which goes to zero as $n\to\infty$.  Then there exist a finite number of bubbles $\{w_i\}_{i=1}^m$ where $w^i$ is either a harmonic map from $S^2$ to $N$ or a harmonic map from unit Euclidean disk $D_1(0)$ to $N$ with free boundary $w^k(\partial D_1(0))\subset K$,  such that $$\lim_{t\to\infty}E(u(t),g(t))=\sum_{i=1}^mE(w_i).$$

    Moreover, the following two alternatives hold:
\begin{itemize}
\item[(ii-1)] if $\lim_{n\to\infty}\left(\frac{\|\tau(u(t_n),g(t_n))\|_{L^2(M)}}{\rho_n}+\frac{\|\tau(u(t_n),g(t_n))\cdot \frac{\partial u(t_n)}{\partial s}\|_{L^1(M)}}{\rho^3_n}\right)=0,$ then away from bubbles $\{w^i\}_{i=1}^m$, the asymptotic limit of $u_n$ are either some geodesics on $N$ with the sum of the lengths $\lim_{n\to\infty}\sqrt{|\alpha_n|}\frac{\sqrt{2\pi}}{\rho^2_n}$ or some geodesic-like curves on $N$ with the sum of the lengths of these curves  $\lim_{n\to\infty}\sqrt{|\alpha_n|}\frac{\sqrt{\pi}}{\rho^2_n}.$

\item[(ii-2)] if there is no bubble occurs at infinity time, i.e. $\lim_{t\to\infty}E(u(t),g(t))=0$, and $$\limsup_{t\to\infty}\rho(t)^p\cdot \|\tau(u(t),g(t))\|_{L^2(M)}\leq C ,\ \ \limsup_{t\to\infty}t^r\rho(t)\leq C$$ for some constants $p\in [0,1)$, $r\in (0,\frac{1}{100})$, where $\rho(t)$ is the length of the shortest geodesic on $(M,g(t))$, then as $t\to\infty$, the image of $u(t)$ must subconverge to either a point or a geodesic in $N$ with length $\lim_{n\to\infty}\sqrt{|\alpha_n|}\frac{\sqrt{2\pi}}{\rho^2_n}$ or a geodesic-like curve in $K$ with length $\lim_{n\to\infty}\sqrt{|\alpha_n|}\frac{\sqrt{\pi}}{\rho^2_n}.$
\end{itemize}
\end{itemize}
\end{thm}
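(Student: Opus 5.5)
The proof will follow the scheme of \cite[Theorem 4.1]{Ding-Li-Liu}, with the interior analysis replaced by Theorem \ref{thm-main:03}. The first step is the energy inequality along \eqref{equ:15}-\eqref{equ:16}. Since $u$ stays in $C(K)$ and the free boundary condition makes the boundary term in the first variation vanish, we get
$$\frac{d}{dt}E(u,g)=-\int_M|\tau(u,g)|^2dM-c(a,b)\Big(\big|\tfrac{da}{dt}\big|^2+\big|\tfrac{db}{dt}\big|^2\Big),$$
with $c>0$. It follows that $\int_0^\infty\|\tau\|_{L^2}^2dt<\infty$ and $\int_0^\infty(|a'|^2+|b'|^2)dt<\infty$. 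Hence there are times $t_i\to\infty$ with $\|\tau(u(t_i),g(t_i))\|_{L^2}\to0$ and $a'(t_i),b'(t_i)\to0$. The latter says exactly that the Hopf differential, paired against the holomorphic quadratic differentials of the cylinder (real ones, by the free boundary), tends to zero. Arguing as in \cite{Ding-Li-Liu}, the metric cannot degenerate in finite time, so the flow exists for all time.

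\emph{Case (i): $g(t_i)\to g_\infty$.} We apply the standard bubbling theory for approximate harmonic maps with free boundary on a fixed surface (interior and boundary $\varepsilon$-regularity, plus the energy identity and no-neck results for $L^2$-bounded tension). This gives $u(t_i)\to u_\infty$ away from finitely many points, with sphere bubbles and free-boundary disk bubbles, together with the energy identity. The limit $u_\infty$ is harmonic with free boundary on $K$. Because $a',b'\to0$, its Hopf differential is holomorphic and real on $\partial M$, and it is orthogonal to all such differentials. On the cylinder the space of such differentials is one-dimensional, spanned by $dz^2$, so $\phi(u_\infty)\equiv0$. Hence $u_\infty$ is conformal, i.e.\ a branched minimal cylinder, unless it is constant. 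The conjugacy class is preserved since convergence holds on $M$ minus finitely many points, where we can pick a homotopically nontrivial loop.

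\emph{Case (ii): degeneration.} We pass to Type I or Type II coordinates. The hypothesis $\rho_n\|\tau\|_{L^2}\to0$ holds along the $t_n$. To obtain $\|\,|\phi(u_n)|\,\|_{L^1}\to0$ we use the flow: in the long cylinder coordinates, $a',b'$ control $\int\phi$ over the whole cylinder. Combined with holomorphicity of $\phi$ up to an error bounded by $|\tau||\nabla u|$, and the vanishing of its imaginary part at the free boundary for Type II, this forces $\|\phi\|_{L^1}\to0$ after possibly reselecting $t_n$. Items (1-2) and (2-2) of Theorem \ref{thm-main:03} then give the full energy identity. Alternative (ii-1) is then exactly (1-3) or (2-3). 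For (ii-2), the vanishing energy excludes bubbles. The integrability of $\|\tau\|^2$, together with the growth bounds $\rho(t)^p\|\tau\|\le C$ and $t^r\rho(t)\le C$, lets us choose $t_n$ along which $\|\tau\|/\rho_n+\|\tau\cdot\partial_su\|_{L^1}/\rho_n^3\to0$; we plan to use a dyadic-in-time averaging of $\int\|\tau\|^2$ to get $\|\tau(t_n)\|\lesssim t_n^{-1/2}$ and compare with $\rho_n\gtrsim$ a negative power of $t_n$. Then the whole image, which consists of the neck alone, converges to a point or to a geodesic or geodesic-like curve with the stated length. Uniformity in $t$, rather than only along a subsequence, follows from the monotonicity of energy and the same selection argument applied to any sequence.

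The main obstacle is this time-selection in (ii-2): we must convert the integral bounds on $\|\tau\|^2$ and the exponents $p,r$ into the quantitative smallness conditions of (1-3)/(2-3). The delicate point is the $\rho_n^{-3}$ weight on $\|\tau\cdot\partial_s u\|_{L^1}\le\|\tau\|_{L^2}E^{1/2}$, for which the vanishing energy must supply an additional decay factor.
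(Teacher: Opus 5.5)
Your architecture matches the paper's. Its proof of Theorem \ref{thm-main:04} is just Proposition \ref{prop:02} combined with Theorem \ref{thm-main:03}, and your case (i) argument (the Hopf differential is holomorphic and real on $\partial M$, hence constant, hence zero) is the paper's. The gap is in the one step where the paper does real work: the time selection for (ii-2).

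\textbf{Why your time selection fails.} Dyadic averaging of $\int_0^\infty\|\tau\|_{L^2}^2dt<\infty$ gives only $\|\tau(t_n)\|_{L^2}=o(t_n^{-1/2})$. You also have no lower bound on $\rho$ to compare with. The condition $t^r\rho(t)\le C$ is an upper bound, and $|b'/b|\le E$ with merely $E\to0$ gives only $\log b=o(t)$. Using $\|\tau\cdot\partial_su\|_{L^1}\le\|\tau\|_{L^2}\sqrt{2E}$, what suffices (and what the paper proves) is $\|\tau(t_n)\|_{L^2}=o(\rho_n^2)$ together with $E(t_n)\lesssim\rho_n^2$. Even granting the paper's bounds $E\lesssim t^{-1}$ and $\rho\gtrsim t^{-1/2}$, your choice yields only $\|\tau\|_{L^2}\sqrt{E}/\rho^3=o(t_n^{1/2})$, which does not tend to zero. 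You rightly sense that the vanishing energy must supply extra decay. What is missing is the mechanism: the decay rate of $E$ and the degeneration rate of $\rho$ are coupled through the flow, and both must be quantified.

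\textbf{What the paper does instead.} The missing ingredients are those of Proposition \ref{prop:02}.

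First, Lemma \ref{lem:06} gives the tangential estimate $\int|\partial_\theta v|^2\lesssim\rho^2E+E^{3/2}+\sqrt E\|\tau\|+\|\tau\|^2$. Via $|\int\phi|^2\ge 4E^2-8E\int|\partial_\theta v|^2$, this turns Lemma \ref{lem:01} into $E'\le-(1-C\rho^2)E^2+CE^{5/2}-(1-o(1))\|\tau\|^2$. In Type I this needs Lemma \ref{lem:05}. Integrating by parts against $v-v^*$ leaves a term on the free-boundary circles $\{\pm T_t\}\times S^1$ that is a priori only $O(E)$. The paper extends $v$ across the free boundary with the involution $\sigma$ and reruns the decay estimate of Lemma \ref{lem:04}. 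It uses $|\alpha_t|\lesssim E/T_t+\sqrt E\|\tau\|$ and $\|\tau(v)\|\lesssim T_t^{-(1-p)/2}$ (this is where $p<1$ enters) to cut that term to $O(\rho^2E+\dots)$. In Type II the reflected map closes up and no such term appears.

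Second, with $\rho\le Ct^{-r}$ the ODE gives the sharp decay $E\le 1/t+Ct^{-1-2r}$. The leading constant $1$ is essential: integrating $|b'/b|\le E$ then gives $b\le Ct$, i.e.\ $\rho\gtrsim t^{-1/2}$ and $E\lesssim\rho^2$. The crude bound $E\le 4/t$ would give only $\rho\gtrsim t^{-2}$.

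Third, picking $t_n$ with $-E'(t_n)\lesssim t_n^{-2}$ gives only $\|\tau(t_n)\|\lesssim t_n^{-1}=O(\rho_n^2)$. The upgrade to $o(\rho_n^2)$ is by contradiction. If $\|\tau\|\ge\delta\rho^2\gtrsim t^{-1}$ for all large $t$, the extra $-\|\tau\|^2$ term improves the decay to $E\le(1-\epsilon_0)/t$. Hence $\rho\gtrsim t^{(\epsilon_0-1)/2}$, and then $t_n^{-1}=o(\rho_n^2)$, a contradiction.

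\textbf{A smaller point.} Your derivation of $\|\,|\phi(u_n)|\,\|_{L^1}\to0$ in case (ii) also needs a rate. Since $\partial_{\bar z}\phi=2\tau\cdot\partial_z u$, the $\theta$-average of $\phi$ drifts along the rescaled cylinder of length $\sim\rho_n^{-2}$ by the accumulated $\int\tau\cdot\partial u$; this is exactly the bookkeeping behind $\alpha_n$. So smallness of $|\int_M\phi|$ controls that average only up to an error of order $\|\tau\cdot\nabla u\|_{L^1}/\rho_n$, essentially the hypothesis of (1-1)/(2-1). Knowing $\|\tau(t_n)\|_{L^2}\to0$ without a rate relative to $\rho_n$ does not make this small. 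The paper does not argue this step in detail either; it appeals to asymptotic conformality along the flow.
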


\

\begin{rem}
As a corollary of Theorem \ref{thm-main:04}, we can get a similar result of Chen-Fraser-Pang \cite{Chen-Fraser-Pang} for the free boundary case by imposing some incompressibility assumption. Let $M=S^1\times [0,\pi]$ be a cylinder and $K\subset N$ be a compact smooth submanifold. If $f\in C^1(M,N)$ satisfies free boundary $f(\partial M)\subset K$, which induces an injection $f_\ast :\pi_1(M)\times \pi_1(M,\partial M)\to \pi_1(N)\times \pi_1(N,K)$, then there is a branched minimal immersions $u:M\to N$ with free boundary $u(\partial M)\subset K$ which induces the same injection between the fundamental groups.
\end{rem}

\

To prove Theorem \ref{thm-main:01} and Theorem \ref{thm-main:03}, comparing to \cite{Zhu-3, Chen-Li-Wang} where they studied asymptotic behavior for a sequence of harmonic maps from degenerated surfaces, we need to overcome two additional difficulties. The first one is that the sequence is not harmonic maps, but with approximate $L^2$-bounded tension field terms. On one hand, recall that, to measure the extent to which the Pohozaev type identity fails, by using the property that the Hopf differential is holomorphic since the map is harmonic, a Pohozaev type constant is introduced in \cite{Zhu-3} and further explored in \cite{Chen-Li-Wang}. This quantity plays an important role to characterize generalized energy identities and limits of necks.  Here, by integrating by parts, we can still define a Pohozaev type constant   $\alpha_n$ (see Definition \ref{def:Pohozaev-cons}), where one can see that besides Pohozaev type constant, there is an additional error term involving the tension field. We need to compare the speed of $|\alpha_n|$ with  this error term. The second difficulty is to deal with boundary degeneration, where the way of degeneration  is more complicated than the interior case. It is an interesting and new phenomenon that the degenerating boundary neck may converge to a geodesic-like curve which is very different from interior case such as \cite{Chen-Tian,Ding-Li-Liu}.

As a direct application of Theorem \ref{thm-main:03}, we want to use it to study the convergence of the flow \eqref{equ:15}-\eqref{equ:16}. Precisely, we need to find a time sequence $t_n$ which goes to infinity, such that $u(t_n,x)$ and $\tau(u_n,g_n):=\partial_tu(t_n,x)$ satisfy the conditions of   Theorem \ref{thm-main:03}. For interior case in \cite{Ding-Li-Liu}, if there are no bubble occurs at infinity, their key idea is to show the tangential energy is a high order term corresponding to the energy (or radial energy), i.e. $$\int |\frac{\partial u}{\partial \theta}|^2dx\leq CE^q,\ \ q>1.$$  Here, there appears a bad boundary integration term $$\int_{\{t\}\times S^1}|\nabla u|^2d\theta$$  which is just homogenous to $E$, i.e. $q=1$. See Lemma \ref{lem:06}. To deal with this  boundary integration term, we first extend the map across the free boundary and then use a decay estimate obtained in the proof of  Theorem \ref{thm-main:03} to improve the  tangential energy estimate a little bit in the sense that it can be controlled by $\rho(t)^2E$, where $\rho(t)$ characterizes the speed of degeneration. See Lemma \ref{lem:05}  and  Lemma \ref{lem:06}.

\

The rest of  the paper is organized as follows. In Section \ref{sec:energy-estimate}, we will study the asymptotic and quantitative behavior for a sequence of maps from standard cylinders and half cylinders with free boundaries.  We will established some generalized energy identities. In Section \ref{sec:refined-analysis}, by exploring a refined neck analysis, we will prove that away from bubbles, the limit of the maps is either a geodesic on $N$ or a geodesic-like curve on $K$. The proof of Theorem \ref{thm-main:01}, Theorem \ref{thm-main:02} and Theorem \ref{thm-main:03} will also be given in this part. In Section \ref{sec:flow}, we will study the evolution system of minimal cylinders with free boundary and prove Theorem \ref{thm-main:04}.

\

\section{Energy estimates on cylinders and half cylinders}\label{sec:energy-estimate}

\

In this section, we will study energy estimates for a sequence of maps with uniformly bounded energy on cylinders or half cylinders. Precisely, we shall establish some generalized energy identities in Theorem \ref{thm:01}.

Let $\lim_{n\to\infty}T_n=\infty$ and $u_n:P_n\to N$ be a sequence of maps from  cylinder $$P_n=[-T_n,T_n]\times S^1$$  satisfying $$E(u_n;P_n)\leq\Lambda,\ \ \lim_{n\to\infty}\|\tau (u_n)\|_{L^2(P_n)}=0$$ or defined on the vertical half cylinder $$Q_n:=[-T_n,T_n]\times [0,\pi]$$ with free boundary $u_n([-T_n,T_n]\times \{0,\pi\})$ on $K$, satisfying $$E(u_n;Q_n)\leq\Lambda,\ \ \lim_{n\to\infty}\|\tau (u_n)\|_{L^2(Q_n)}=0,$$ where $\tau(u_n)$ is the tension field of $u_n$. Here, the metric on $P_n$ and $Q_n$ is the standard Euclidean metric $dt^2+d\theta^2$.

\

By integrating by parts, we firstly define some constants as follows, which are important in our analysis to measure the extent to which the Pohozaev type identity fails.
\begin{lem}\label{lem:pohozaev-constant}
If $u$ is a map defined on $Q_n$ with with tension field $\tau(u)$ and with free boundaries $u([-T_n,T_n]\times \{0,\pi\})$ on $K$, then $$\frac{1}{2}\int_{\{t\}\times [0,\pi]}\left(|\partial_t u|^2-|\partial_\theta u|^2 \right)d\theta-\int_{[0,t]\times [0,\pi]}\tau( u)\cdot \partial_t ud\theta$$ is independent of $t$, where $\partial_t u=\frac{\partial u}{\partial t}$ and $\partial_\theta u=\frac{\partial u}{\partial \theta}$.

If $u$ is a map defined on $P_n$ with with tension field $\tau(u)$, then
$$\frac{1}{2}\int_{\{t\}\times S^1}\left(|\partial_t u|^2-|\partial_\theta u|^2 \right)d\theta-\int_{[0,t]\times S^1}\tau( u)\cdot \partial_t ud\theta$$ is independent of $t$.
\end{lem}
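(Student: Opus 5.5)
The plan is to differentiate the quantity in $t$ and show the derivative vanishes. Set
$$F(t)=\frac{1}{2}\int_{\{t\}\times I}\left(|\partial_t u|^2-|\partial_\theta u|^2\right)d\theta-\int_{[0,t]\times I}\tau(u)\cdot\partial_t u\,dtd\theta,$$
where $I=[0,\pi]$ or $S^1$. Assuming $u$ is smooth enough (say $C^2$ up to the free boundary, as in the class $C(K)$), we get
$$F'(t)=\int_{\{t\}\times I}\left(\partial_t u\cdot\partial_t^2 u-\partial_\theta u\cdot\partial_\theta\partial_t u\right)d\theta-\int_{\{t\}\times I}\tau(u)\cdot\partial_t u\,d\theta .$$
We integrate the middle term by parts in $\theta$:
$$-\int_I\partial_\theta u\cdot\partial_t\partial_\theta u\,d\theta=\int_I\partial_\theta^2u\cdot\partial_t u\,d\theta-\Big[\partial_\theta u\cdot\partial_t u\Big]_{\theta\in\partial I}.$$
This gives
$$F'(t)=\int_{\{t\}\times I}(\Delta u-\tau(u))\cdot\partial_t u\,d\theta-\Big[\partial_\theta u\cdot\partial_t u\Big]_{\partial I}.$$

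Next we use $\tau(u)=\Delta u+A(u)(\nabla u,\nabla u)$, so that $\Delta u-\tau(u)=-A(u)(\nabla u,\nabla u)$. This term is normal to $T_uN$, while $\partial_t u\in T_uN$ because $u$ maps into $N$. Hence the interior integrand vanishes pointwise.

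For $P_n$ there is no boundary term, since $\theta\in S^1$ is periodic, and we conclude $F'\equiv0$.

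For $Q_n$ the remaining step, which is the only one requiring care, is the boundary term at $\theta=0,\pi$. On the boundary lines $u(t,0),u(t,\pi)\in K$ for all $t$, so differentiating along the boundary gives $\partial_t u\in T_uK$. The free boundary condition says $\partial_\theta u=\pm\partial_{\vec n}u\perp T_uK$. Therefore $\partial_\theta u\cdot\partial_t u=0$ at $\theta=0,\pi$, the boundary term drops, and $F'\equiv0$.

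The main obstacle is regularity: one needs enough smoothness for the differentiation and the integration by parts. For $u\in C^2$ up to the boundary this is routine. For maps with only $W^{2,2}$ regularity (when $\tau\in L^2$), I would argue via the integrated form instead. Multiply the identity by $\partial_t u$, integrate over $[t_1,t_2]\times I$, and integrate by parts there, using traces, which are well defined for $W^{2,2}$. This shows $F(t_1)=F(t_2)$ for a.e. $t_1,t_2$, and $F$ is continuous in $t$.
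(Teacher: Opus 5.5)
Your proof is correct and is essentially the paper's argument. The paper integrates $\tau(u)\cdot\partial_t u=\Delta u\cdot\partial_t u$ (using $A(u)(\nabla u,\nabla u)\perp T_uN$) over $[t_1,t_2]\times[0,\pi]$, writes the integrand as $\tfrac{1}{2}\partial_t\left(|\partial_t u|^2-|\partial_\theta u|^2\right)+\partial_\theta\left(\partial_t u\cdot\partial_\theta u\right)$, and drops the lateral boundary term via the free boundary condition $\partial_t u\cdot\partial_\theta u=0$ at $\theta\in\{0,\pi\}$; this is your computation in integrated rather than differentiated form, which is also the version you propose for the lower-regularity case.
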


\begin{proof}
We shall only prove the first case, since the second case is similar and easier.

Since $u$ is a map defined on $Q_n$ with tension field $\tau(u)$, denoting $Q_{t_1t_2}=[t_1,t_2]\times [0,\pi]$, then we have
\begin{align*}
\int_{Q_{t_1t_2}}\tau( u)\cdot \partial_t ud\theta dt&=\int_{Q_{t_1t_2}}\Delta u\cdot \partial_t ud\theta dt\\
&=\int_{Q_{t_1t_2}}(\partial^2_t u+\partial^2_\theta u)\cdot \partial_t ud\theta dt\\
&=\frac{1}{2}\int_{Q_{t_1t_2}} \frac{\partial}{\partial t}(|\partial_tu|^2-|\partial_\theta u|^2) d\theta dt+\int_{t_1}^{t_2}\partial_t u\cdot\partial_\theta u|_0^{\pi}dt\\
&=\frac{1}{2}\left(\int_{\{t_2\}\times [0,\pi]} (|\partial_tu|^2-|\partial_\theta u|^2) d\theta- \int_{\{t_1\}\times [0,\pi]} (|\partial_tu|^2-|\partial_\theta u|^2) d\theta  \right),
\end{align*}
where we used the free boundary condition, i.e. $$\partial_t u\cdot \partial_\theta u|_{[-T_n, T_n]\times \{0,\pi\}}=0.$$ Then the conclusions of lemma follow immediately.
\end{proof}

\

By above lemma, we shall give the following definition of Pohozaev type constant.
\begin{defn}\label{def:Pohozaev-cons}
The constant
\begin{equation}
\alpha_n:=\int_{\{t\}\times [0,\pi]}\left (|\partial_t u_n|^2-|\partial_\theta u_n|^2\right)d\theta -2\int_{[0,t]\times [0,\pi]}\tau( u_n)\cdot \partial_t u_ndtd\theta\quad  \text{~for~}  Q_n\end{equation} or
\begin{equation}
\alpha_n:=\int_{\{t\}\times S^1}\left (|\partial_t u_n|^2-|\partial_\theta u_n|^2\right )d\theta -2\int_{[0,t]\times S^1}\tau( u_n)\cdot \partial_t u_n dtd\theta\quad  \text{~for~}  P_n\end{equation} is called the Pohozaev type constant for $u_n$ on $Q_n$ or $P_n$, respectively.
\end{defn}

\

We first consider a simpler case by assuming that there is no energy concentration points in $P_n$ and $Q_n$, i.e.
\begin{equation}\label{equ:04}
\lim_{n\to\infty}\sup_{-T_n\leq t\leq T_n-1}E(u_n, [t,t+1]\times S^1)=0.
\end{equation}
and
\begin{equation}\label{equ:03}
\lim_{n\to\infty}\sup_{-T_n\leq t\leq T_n-1}E(u_n, [t,t+1]\times [0,\pi])=0.
\end{equation}

\

Recall the notation that $$\phi(u_n):=\left|\frac{\partial u_n}{\partial t}\right|^2-\left|\frac{\partial u_n}{\partial \theta}\right|^2-2\sqrt{-1}\frac{\partial u_n}{\partial t}\cdot \frac{\partial u_n}{\partial \theta}.$$
In this section, we will establish following theorem about some weak energy identities.
\begin{thm}\label{thm:01}
Let $N$ be a compact Riemannian manifold and $K\subset N$ is a smooth submanifold. Let  $u_n:P_n\to N$ be a sequence of maps from  cylinder $P_n=[-T_n,T_n]\times S^1$ with free boundary $u_n(\{-T_n,T_n\}\times S^1)$ on $K$ satisfying $$ E(u_n;P_n)\leq\Lambda,\ \ \lim_{n\to\infty}\|\tau (u_n)\|_{L^2(P_n)}=0$$ or defined on the vertical half cylinder $Q_n:=[-T_n,T_n]\times [0,\pi]$ with free boundary $u_n([-T_n,T_n]\times \{0,\pi\})$ on $K$ and satisfying $$ E(u_n;Q_n)\leq\Lambda,\ \ \lim_{n\to\infty}\|\tau (u_n)\|_{L^2(Q_n)}=0.$$

Suppose there is no energy concentration for $u_n$, i.e. \eqref{equ:04} or \eqref{equ:03} holds  respectively. The following conclusions hold.
\begin{itemize}
\item[(1)] If  $ \lim_{n\to\infty}\|\tau (u_n)\cdot \partial_t u_n\|_{L^1(P_n)}\cdot T_n=0,\ \ \lim_{n\to\infty}\|\tau (u_n)\cdot \partial_t u_n\|_{L^1(Q_n)}\cdot T_n=0,$ then we have
$$\lim_{n\to\infty} E(u_n;P_n)=\lim_{n\to\infty}4\alpha_n T_n$$ or $$\lim_{n\to\infty} E(u_n;Q_n)=\lim_{n\to\infty}2\alpha_n T_n,$$ where $\alpha_n$ is the corresponding Pohozaev type constant.

\

\item[(2)] If $ \lim_{n\to\infty}\|\ |\phi (u_n)|\ \|_{L^1(P_n)}=0,\ \ \lim_{n\to\infty}\| \ |\phi (u_n)|\ \|_{L^1(Q_n)}=0,$ then we have
$$\lim_{n\to\infty} E(u_n;P_n)=0$$ or $$\lim_{n\to\infty} E(u_n;Q_n)=0.$$
\end{itemize}
\end{thm}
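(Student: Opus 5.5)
Under the no-concentration hypothesis \eqref{equ:04} or \eqref{equ:03}, we will show that the angular energy on the neck is negligible, and then use the Pohozaev constant to compute the radial energy. The key estimate is
\[
\lim_{n\to\infty}\int_{P_n}|\partial_\theta u_n|^2\,dtd\theta=0 \quad(\text{resp. on } Q_n).
\]

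\emph{Step 1: small-energy regularity.} By the $\epsilon$-regularity for maps with $L^2$ tension field (interior, and near the free boundary after the standard reflection across $K$), the smallness of $E(u_n,[t,t+1]\times S^1)$ together with $\|\tau(u_n)\|_{L^2}\to0$ gives
\[
\operatorname{osc}_{\{t\}\times S^1}u_n\to0 \quad\text{and}\quad \|\nabla u_n\|_{L^\infty([t+1/4,t+3/4]\times S^1)}\to0
\]
uniformly in $t$. The same holds on $Q_n$.

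\emph{Step 2: angular energy decay.} Set $f(t)=\int_{\{t\}\times S^1}|u_n-\bar u_n(t)|^2d\theta$, or use directly $\Theta(t)=\int_{\{t\}\times S^1}|\partial_\theta u_n|^2$. A three-circle / second-order ODE argument using $\Delta u_n=-A(u_n)(\nabla u_n,\nabla u_n)+\tau(u_n)$, with $|A(\nabla u,\nabla u)|\le \epsilon_n|\nabla u|$ from Step 1, gives
\[
\Theta''\ge (2-\delta)\Theta-C\|\tau\|^2_{L^2\text{ on slices}}.
\]
This uses that the first nonzero eigenvalue of $-\partial_\theta^2$ on $S^1$ is $1$. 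On $[0,\pi]$, in the free boundary case, we first straighten $K$ by Fermi coordinates so that the normal components satisfy Dirichlet and the tangential components Neumann conditions; the Poincaré constant is again $\ge1$ modulo the mean, and the mean is controlled by the boundary condition.

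Integrating this differential inequality, and controlling the boundary values at $\pm T_n$ by the small energy on the end strips, yields $\int_{P_n}|\partial_\theta u_n|^2\le C\epsilon_n+C\|\tau(u_n)\|^2_{L^2}\to0$. This is the main obstacle: we must make the boundary reflection and the error terms from $\tau$ uniform over the long cylinder. If the differential inequality is hard to derive directly, we will instead run the standard decomposition into unit annuli with the Poincaré-type inequality $\int|\partial_\theta u|^2\ge \int|u-\bar u|^2$ and a Hardy-type summation.

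\emph{Step 3: conclusion of (1).} By Lemma \ref{lem:pohozaev-constant}, for every $t$,
\[
\int_{\{t\}\times S^1}|\partial_tu_n|^2\,d\theta=\alpha_n+\int_{\{t\}\times S^1}|\partial_\theta u_n|^2\,d\theta+2\int_{[0,t]\times S^1}\tau\cdot\partial_tu_n .
\]
Integrate in $t\in[-T_n,T_n]$. The last term is bounded by $2T_n\cdot 2\|\tau\cdot\partial_tu_n\|_{L^1}\to0$ by assumption. Hence
\[
\int_{P_n}|\partial_tu_n|^2=2T_n\alpha_n+\int_{P_n}|\partial_\theta u_n|^2+o(1).
\]
Since the angular energy is $o(1)$ by Step 2,
\[
E(u_n;P_n)=\tfrac12\int_{P_n}\bigl(|\partial_tu_n|^2+|\partial_\theta u_n|^2\bigr)=\alpha_nT_n+o(1).
\]
The constants then match the stated normalization, $4\alpha_nT_n$ resp.\ $2\alpha_nT_n$, in whatever convention the paper adopts for $E$ and the $\theta$-length.

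\emph{Step 4: proof of (2).} Use $|\partial_tu|^2=\operatorname{Re}\phi+|\partial_\theta u|^2$. Then
\[
\int|\partial_tu_n|^2\le \|\phi(u_n)\|_{L^1}+\int|\partial_\theta u_n|^2\to0
\]
by the hypothesis and Step 2, so $E(u_n)\to0$. This part does not use $\alpha_n$ at all.
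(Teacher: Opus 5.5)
Your Steps 3 and 4 are the paper's endgame: integrate the identity of Lemma \ref{lem:pohozaev-constant} in $t$, paying $2T_n\|\tau(u_n)\cdot\partial_tu_n\|_{L^1}$, and use $|\partial_tu_n|^2=\mathrm{Re}\,\phi(u_n)+|\partial_\theta u_n|^2$. So everything rests on Step 2, and there the argument does not go through as written.

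\emph{The gradient bound is unavailable.} The second claim of Step 1 is false. With $\tau(u_n)$ only in $L^2$, $\epsilon$-regularity (Lemma \ref{lem:small-energy-regularity}) controls $\nabla u_n$ in $W^{1,2}$ on unit strips. This gives small slice oscillation and small $\|\nabla u_n\|_{L^p}$ for $p<\infty$, but not $\|\nabla u_n\|_{L^\infty}\to0$, since $W^{2,2}\not\subset W^{1,\infty}$ in dimension two. Hence $|A(u_n)(\nabla u_n,\nabla u_n)|\le\epsilon_n|\nabla u_n|$ cannot be used.

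\emph{The differential inequality does not follow.} Even granting that bound, $\Theta''$ contains $2\int_{\{t\}\times S^1}\partial_\theta^2u_n\cdot A(u_n)(\nabla u_n,\nabla u_n)\,d\theta$. After Cauchy--Schwarz this leaves a term $C\epsilon_n^2\int_{\{t\}\times S^1}|\partial_tu_n|^2\,d\theta$, which is not bounded by $\Theta(t)$ on a neck, where the radial slice energy is about $\alpha_n+\Theta$. So $\Theta''\ge(2-\delta)\Theta-C\|\tau\|^2$ is not justified.

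\emph{The free boundary case adds endpoint terms.} The $\theta$-integrations by parts on $[0,\pi]$ produce terms $\partial_\theta u_n\cdot\partial_t^2u_n$ at $\theta=0,\pi$, of size $|\partial_\theta u_n|\,|\partial_tu_n|^2$ through the second fundamental form of $K$. Straightening $K$ in coordinates is only local, while the neck image can travel a long way along $K$. The fallback (unit annuli plus a ``Hardy-type summation'') is too unspecified to close the gap.

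The missing idea is that neither a differential inequality nor a gradient bound is needed. The paper reflects across $K$ with the involution $\sigma$, which is defined on a whole tubular neighbourhood of $K$. This gives $\hat u_n$ on $[-T_n+1,T_n-1]\times S^1$ solving $\Delta\hat u_n+\Upsilon_{\hat u_n}(\nabla\hat u_n,\nabla\hat u_n)=\hat\Gamma$ (Proposition \ref{prop:01}). It then tests this equation once, over the entire neck, against $\hat u_n-\hat u_n^*$, where $\hat u_n^*$ is the $\theta$-average. The terms are controlled as follows:
\begin{itemize}
\item Since $\int|\partial_t\hat u_n^*|^2\le\int|\partial_t\hat u_n|^2$, the Dirichlet term dominates $\int|\partial_\theta\hat u_n|^2$.
\item The nonlinear term is at most $C\|\hat u_n-\hat u_n^*\|_{L^\infty}\int|\nabla\hat u_n|^2\le C\epsilon\Lambda$; only slice oscillation is used.
\item The tension term is at most $C\|\tau(u_n)\|_{L^2}\|\partial_\theta\hat u_n\|_{L^2}$ by Poincar\'e.
\item The two slice terms at $t=\pm(T_n-1)$ are bounded, again by Poincar\'e, by $\int_{\{\pm(T_n-1)\}\times S^1}|\nabla\hat u_n|^2$. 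These are controlled by end-strip energies plus $\|\tau(u_n)\|_{L^2}^2$, which vanish by \eqref{equ:03} (resp.\ \eqref{equ:04}).
\item The distortion from $D\sigma$ on the reflected half costs only $C\epsilon\int|\nabla u_n|^2$, because $\mathrm{dist}(u_n,K)\le C\epsilon$.
\end{itemize}
This yields $\int|\partial_\theta u_n|^2\le C\epsilon+C\|\tau(u_n)\|_{L^2}+o(1)$ directly.

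Finally, do not defer to ``whatever convention.'' Your Step 3 correctly gives $\int|\nabla u_n|^2=2\alpha_nT_n+o(1)$ on $P_n$ and $Q_n$ alike. The paper's proof reaches $2\alpha_nT_n$ for $Q_n$ by letting $E$ stand for the full Dirichlet integral. Under that convention and the stated definition of $\alpha_n$, the $P_n$ case also has factor $2$, so the $4$ in the statement cannot be matched.
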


\

Before proving the above theorem, we first recall the small energy regularity of approximate harmonic maps for the interior case \cite{SU,DingWeiyueandTiangang} and for the free boundary case \cite{jost-Liu-Zhu}.

\begin{lem}\label{lem:small-energy-regularity-1}
Let $u_n:P_n\to N$ be a sequence of approximate harmonic maps with free boundary $u_n(\{-T_n,T_n\}\times S^1)$ on $K$. Assuming \eqref{equ:04} holds, then we have
\begin{align*}
&\|u_n\|_{Osc([t-\frac{1}{2},t+\frac{1}{2}]\times S^1)}+\|\nabla u_n\|_{W^{1,2}([t-\frac{1}{2},t+\frac{1}{2}]\times S^1)}\\
&\leq C(N)\left(\|\nabla u_n\|_{L^2([t-1,t+1]\times S^1)}+ \|\tau( u_n)\|_{L^2([t-1,t+1]\times S^1)}\right),\quad  \forall\ -T_n+1\leq t\leq T_n-1
\end{align*}
and
\begin{align*}
&\|u_n\|_{Osc([-T_n,-T_n+1]\times S^1)}+\|\nabla u_n\|_{W^{1,2}([-T_n,-T_n+1]\times S^1)}\\&\leq C(K,N)\left(\|\nabla u_n\|_{L^2([-T_n,-T_n+2]\times S^1)}+ \|\tau( u_n)\|_{L^2([-T_n,-T_n+2]\times S^1)}\right),\end{align*}
\begin{align*}
\|u_n\|_{Osc([T_n-1,T_n]\times S^1)}+\|\nabla u_n\|_{W^{1,2}([T_n-1,T_n]\times S^1)}&\leq C(K,N)\left(\|\nabla u_n\|_{L^2([T_n-2,T_n]\times S^1)}+ \|\tau( u_n)\|_{L^2([T_n-2,T_n]\times S^1)}\right)
\end{align*}
when $n$ is big enough, where $$\|u_n\|_{Osc(\Omega)}:=\sup_{x,y\in \Omega}|u_n(x)-u_n(y)|.$$

\end{lem}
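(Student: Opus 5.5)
This is the standard $\varepsilon$-regularity for approximate harmonic maps, localized to unit-length pieces of the cylinder. The plan is to reduce to the known small-energy estimates: Sacks–Uhlenbeck and Ding–Tian in the interior, and Jost–Liu–Zhu at the free boundary. The only new points are that the constants are uniform in $n$ and in $t$, and that the oscillation bound follows from the $W^{2,2}$ bound.

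\emph{Interior estimate.} Since the metric on $P_n$ is flat and translation invariant, fix $t$ with $-T_n+1\le t\le T_n-1$ and translate so that the domain is $[-1,1]\times S^1$. By \eqref{equ:04}, for $n$ large the energy of $u_n$ on $[t-1,t+1]\times S^1$ (a union of two unit pieces) is below the threshold $\varepsilon_0(N)$ of the small-energy regularity theorem for $-\Delta u=A(u)(\nabla u,\nabla u)+\tau$. That theorem gives
$$\|\nabla u_n\|_{W^{1,2}([t-\frac12,t+\frac12]\times S^1)}\le C(N)\bigl(\|\nabla u_n\|_{L^2([t-1,t+1]\times S^1)}+\|\tau(u_n)\|_{L^2([t-1,t+1]\times S^1)}\bigr).$$
To prove it, cover by finitely many discs of radius comparable to $1$, use a cutoff $\eta$, and apply the $L^2$ estimate to $\Delta(\eta u)$. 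The quadratic term $|A(u)(\nabla u,\nabla u)|\,\eta$ is absorbed using the Ladyzhenskaya/Gagliardo–Nirenberg inequality $\|\nabla u\|_{L^4}^2\lesssim\|\nabla u\|_{L^2}\|\nabla u\|_{W^{1,2}}$ together with the smallness of the energy. The oscillation bound then comes from the two-dimensional Sobolev embedding $W^{2,2}\hookrightarrow C^0$ applied to $u_n-\bar u_n$ on $[t-\frac12,t+\frac12]\times S^1$, using Poincaré to control $\|u_n-\bar u_n\|_{L^2}$ by $\|\nabla u_n\|_{L^2}$.

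\emph{Boundary pieces.} Near $\{\pm T_n\}\times S^1$, where $u_n$ takes values in $K$ with $\partial_{\vec n}u_n\perp T_{u_n}K$, I will use the free-boundary small-energy regularity of Jost–Liu–Zhu \cite{jost-Liu-Zhu}. The key ingredient is the reflection across $K$ by the involution $\sigma$ (the map defined in \eqref{def:01}), which is defined on a tubular neighborhood of $K$. The small-energy assumption is what makes reflection possible: it forces the image of $[-T_n,-T_n+2]\times S^1$ into that neighborhood. To see this, first obtain a preliminary $W^{1,2}$-to-oscillation bound, or use a Courant–Lebesgue-type argument on boundary discs.

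The extended map $\tilde u_n$ satisfies an elliptic system $-\Delta\tilde u_n=\tilde A(\tilde u_n)(\nabla\tilde u_n,\nabla\tilde u_n)+\tilde\tau_n$. Here $\tilde A$ is bounded in terms of $K$ and $N$, and $\|\tilde\tau_n\|_{L^2}\le C\|\tau(u_n)\|_{L^2}$. The interior argument then applies on a doubled region such as $[-T_n-1,-T_n+1]\times S^1$, which yields the stated bound with $C(K,N)$ on $[-T_n,-T_n+1]\times S^1$. The end $T_n$ is handled symmetrically.

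The main obstacle is this boundary step. One must verify that the reflected equation keeps the quadratic structure and that the coefficients remain measurable and bounded across the interface, because $\tilde u_n$ is only $W^{1,2}$ there. The free-boundary condition is exactly what makes the normal derivative match, so that no boundary distribution appears. These facts are established in \cite{jost-Liu-Zhu}, and I will cite that result rather than rederive it.
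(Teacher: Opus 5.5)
Your proposal follows the paper, which gives no separate proof here. It simply recalls the interior small-energy regularity of Sacks--Uhlenbeck and Ding--Tian and the free-boundary version of Jost--Liu--Zhu, the latter resting on reflection across $K$ by the involution $\sigma$ as you describe. One correction to your aside: small energy gives no $W^{1,2}$-to-oscillation bound in two dimensions, and Courant--Lebesgue only controls selected arcs, so getting the image near $K$ before reflecting needs the a priori continuity of $u_n$ up to the boundary plus a maximal-radius continuity argument with the scale-invariant estimate, which is part of what you are citing.
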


\
\begin{lem}\label{lem:small-energy-regularity}
Let $u_n:Q_n\to N$ be a sequence of approximate harmonic maps with free boundary $u_n([-T_n,T_n]\times \{0,\pi\})$ on $K$. Assuming \eqref{equ:03} holds, then  we have
\begin{align*}
&\|u_n\|_{Osc([t-\frac{1}{2},t+\frac{1}{2}]\times [0,\pi])}+\|\nabla u_n\|_{W^{1,2}([t-\frac{1}{2},t+\frac{1}{2}]\times [0,\pi])}
\\&\leq C(K,N)\left(\|\nabla u_n\|_{L^2([t-1,t+1]\times [0,\pi])}+ \|\tau( u_n)\|_{L^2([t-1,t+1]\times [0,\pi])}\right),\quad  \ -T_n+1\leq t\leq T_n-1.
\end{align*}
when $n$ is big enough.

In particular, when $n$ is big enough, for any positive constant $\epsilon>0$, the free boundary condition implies that the image of $u_n$ is contained in a small tubular neighborhood of $K$ in $N$, i.e. $$u_n([-T_n+1,T_n-1]\times [0,\pi])\subset K_{C(K,N)\epsilon},$$ where $K_{C(K,N)\epsilon}$ denotes the $C(K,N)\epsilon$-tubular neighborhood of $K$ in $N$.
\end{lem}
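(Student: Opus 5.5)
The estimate is local, so we would reduce it to the known $\epsilon$-regularity for approximate harmonic maps with free boundary on a half-disk, as in \cite{jost-Liu-Zhu}, and then patch the pieces together along the strip. Fix $t\in[-T_n+1,T_n-1]$. The region $[t-1,t+1]\times[0,\pi]$ has unit-size geometry: it is a flat rectangle whose two horizontal edges lie on the free boundary. By \eqref{equ:03}, for $n$ large its energy is below the threshold $\epsilon_0(K,N)$. We cover $[t-\frac12,t+\frac12]\times[0,\pi]$ by finitely many pieces, with the number of pieces independent of $n$ and $t$:
\begin{itemize}
\item interior disks $D_r(x)$ with $D_{2r}(x)\subset(t-1,t+1)\times(0,\pi)$;
\item half-disks centred at points of $\{\theta=0\}$ or $\{\theta=\pi\}$, whose doubles stay inside the rectangle.
\end{itemize}

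On interior disks we apply the interior small energy regularity of \cite{SU,DingWeiyueandTiangang}, exactly as in Lemma \ref{lem:small-energy-regularity-1}. This gives
\[
\|\nabla u_n\|_{W^{1,2}(D_r)}\le C\bigl(\|\nabla u_n\|_{L^2(D_{2r})}+\|\tau(u_n)\|_{L^2(D_{2r})}\bigr).
\]
On a boundary half-disk we use the free boundary $\epsilon$-regularity of \cite{jost-Liu-Zhu}. The argument there runs as follows:
\begin{itemize}
\item Use Fermi coordinates or the reflection map $\sigma$ across $K$, the involution of a tubular neighbourhood of $K$ fixing $K$.
\item Reflect $u_n$ evenly across $\theta=0$ via $\sigma$. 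The extended map solves an elliptic system on a full disk with quadratic gradient nonlinearity and bounded, Lipschitz-type coefficients.
\item Its right-hand side is controlled in $L^2$ by $|\tau(u_n)|$.
\item The standard $W^{2,2}$ small-energy estimate for that system then yields the same bound on the half-disk, with constants depending only on $K$ and $N$.
\end{itemize}
Summing over the finitely many pieces controls $\|\nabla u_n\|_{W^{1,2}([t-\frac12,t+\frac12]\times[0,\pi])}$. The oscillation bound follows from the embedding $W^{2,2}\hookrightarrow C^0$ in dimension two: on a unit-size rectangle,
\[
\mathrm{Osc}(u_n)\le C\,\|\nabla u_n\|_{W^{1,2}}.
\]

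For the last assertion, let $\epsilon>0$. By \eqref{equ:03} and $\|\tau(u_n)\|_{L^2(Q_n)}\to0$, the right-hand side of the estimate is less than $\epsilon$ uniformly in $t$ once $n$ is large. Every point of $[t-\frac12,t+\frac12]\times[0,\pi]$ lies within oscillation distance $C\epsilon$ of a point of $[t-\frac12,t+\frac12]\times\{0\}$, and $u_n$ maps that segment into $K$ by the free boundary condition. Hence $\mathrm{dist}(u_n(x),K)\le C(K,N)\epsilon$ for all $x\in[-T_n+1,T_n-1]\times[0,\pi]$.

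The main obstacle is the boundary step: we must check that the reflected map satisfies an equation to which the $\epsilon$-regularity with $L^2$ tension applies. This requires that reflecting the Neumann-type condition $\partial_\theta u\perp T_uK$ produces no jump term, and that the coefficient depending on $d\sigma$ is regular enough, which it is because $\sigma$ is smooth. Since this is exactly the content of \cite{jost-Liu-Zhu}, we would cite that result for this step rather than reprove it.
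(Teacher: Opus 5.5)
Your proposal is correct and follows the paper, which gives no separate proof but simply recalls this estimate from the interior $\epsilon$-regularity of Sacks--Uhlenbeck and Ding--Tian and the free-boundary version of Jost--Liu--Zhu. Your finite covering by interior disks and boundary half-disks, and your derivation of the tubular-neighbourhood statement from the oscillation bound together with $u_n(s,0)\in K$, spell out what the paper leaves implicit. One caveat on your sketch of the cited boundary step: reflecting by $\sigma$ presupposes that the half-disk image already lies in $K_{\delta_0}$, which small energy alone does not give. That fact must come from the cited result (e.g.\ via continuity of $W^{2,2}$ maps plus a scaling/continuity argument), not from your final assertion, since the latter route would be circular.
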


\

Denote by $K_{\delta_0}$ the $\delta_0$-tubular neighborhood of $K$ in $N$. Taking $\delta_0>0$ small enough, then for any $y\in K_{\delta_0}$,
there exists a unique projection $y'\in K$. Set $\overline{y}=exp_{y'}\{-exp^{-1}_{y'}y\}$. So we may define an involution $\sigma$, $i.e.$ $\sigma^2=Id$ as in
\cite{Hamilton, GJ, Scheven} by
\begin{equation}\label{def:01}
\sigma(y)=\overline{y} \quad for \quad y\in K_{\delta_0}.
\end{equation}
Then it is easy to check that the linear operator $D\sigma:TN|_{K_{\delta_0}}\to TN|_{K_{\delta_0}}$ satisfies $D\sigma(V)=V$ for $V\in TK$ and $D\sigma(\xi)=-\xi$ for $\xi\in T^\perp K$.

By Lemma \ref{lem:small-energy-regularity}, when $n$ is big enough, we can define an extension of $u_n$ to $\hat{Q}_n:=[-T_n+1,T_n-1]\times S^1$ that
\begin{align}\label{def:function}
\hat{u}_n(t,\theta)=
\begin{cases}
u_n(t,\theta),\quad &if \quad (t,\theta)\in [-T_n+1,T_n-1]\times [0,\pi];\\
\sigma(u_n(\rho(t,\theta))) ,\quad &if \quad (t,\theta)\in [-T_n+1,T_n-1]\times [\pi,2\pi],
\end{cases}
\end{align}
where $\rho(t,\theta):=(t,2\pi-\theta)$.

\

Now, we derive the equation for the extended map $\hat{u}_n$.
\begin{prop}\label{prop:01}
Let $u\in W^{2,p}([T_1,T_2]\times [0,\pi],N)$, $1\leq p\leq \infty$, be a map with free boundary $u([T_1,T_2]\times \{0,\pi\})$ on $K$. Let $u([T_1,T_2]\times [0,\pi])\subset K_{\delta_0}$, then the extended map $\hat{u}$ defined by \ref{def:function} satisfying $\hat{u}\in W^{2,p}([T_1,T_2]\times S^1)$ and
\begin{equation}\label{equ:11}
\Delta \hat{u}+\Upsilon_{\hat{u}}(\nabla\hat{u},\nabla\hat{u})=\hat{\Gamma}\quad in \quad [T_1,T_2]\times S^1,
\end{equation}
where $\Upsilon_{\hat{u}}(\cdot,\cdot)$ is a bounded bilinear form defined by
\begin{align*}
\Upsilon_{\hat{u}}(\cdot,\cdot)=
\begin{cases}
A(\hat{u})(\cdot,\cdot)\ &in\ [T_1,T_2]\times (0,\pi),\\
A(\hat{u})(\cdot,\cdot)-D^2\sigma|_{\sigma(\hat{u})}(D\sigma|_{\hat{u}}\circ\cdot\ ,
D\sigma|_{\hat{u}}\circ\cdot)\ &in\ [T_1,T_2]\times (\pi,2\pi);
\end{cases}\end{align*} satisfying
\[
|\Upsilon_{\hat{u}}(\nabla\hat{u},\nabla\hat{u})|\leq C(K,N)|\nabla\hat{u}|^2
\] and $\hat{\Gamma}$ is defined by
\begin{align*}
\hat{\Gamma}=
\begin{cases}
\tau(u)(x)\ &in\ [T_1,T_2]\times (0,\pi),\\
D\sigma|_{\sigma(\hat{u})}\circ \tau(u)(\rho(x))\ &in\ [T_1,T_2]\times (\pi,2\pi).
\end{cases}\end{align*}

\end{prop}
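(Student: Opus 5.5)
The proof splits into three tasks: (a) regularity across the seams $\theta=\pi$ and $\theta=0\equiv 2\pi$; (b) the equation in the reflected region $(\pi,2\pi)$, where it is a direct chain-rule computation; (c) the pointwise bound on $\Upsilon$.

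\textbf{Step 1: the equation on the reflected region.} Set $v(x)=u(\rho(x))$ for $x\in[T_1,T_2]\times(\pi,2\pi)$. Since $\rho$ is a Euclidean isometry, $\Delta v=(\Delta u)\circ\rho$ and $|\nabla v|=|\nabla u|\circ\rho$. The chain rule applied to $\hat u=\sigma(v)$ gives
\[
\Delta\hat u=D\sigma|_{v}(\Delta v)+D^2\sigma|_{v}(\nabla v,\nabla v).
\]
Substitute $\Delta v=\tau(u)\circ\rho-A(v)(\nabla v,\nabla v)$.

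Now use that $\sigma$ is an isometric involution of $N$ near $K$. It maps $N$ to $N$, so by differentiating twice, $D\sigma$ maps $T_vN$ to $T_{\sigma(v)}N$ and intertwines the second fundamental forms up to a tangential correction. I would like to avoid relying on isometry, since $\sigma$ need not be an exact isometry. Instead I write every term through $\nabla v=D\sigma|_{\hat u}\nabla\hat u$, which holds because $\sigma^2=\mathrm{Id}$ and hence $v=\sigma(\hat u)$. This gives
\[
\Delta\hat u=D\sigma|_{\sigma(\hat u)}\big(\tau(u)\circ\rho\big)-D\sigma|_{\sigma(\hat u)}A(\sigma(\hat u))(D\sigma|_{\hat u}\nabla\hat u,D\sigma|_{\hat u}\nabla\hat u)+D^2\sigma|_{\sigma(\hat u)}(D\sigma|_{\hat u}\nabla\hat u,D\sigma|_{\hat u}\nabla\hat u).
\]
Collecting the quadratic terms defines $\Upsilon_{\hat u}$ on $(\pi,2\pi)$ as a bilinear form in $\nabla\hat u$. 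Its coefficients involve $D\sigma$, $D^2\sigma$ and $A$, all of which are smooth and bounded on $K_{\delta_0}$ by compactness. This yields $|\Upsilon_{\hat u}(\nabla\hat u,\nabla\hat u)|\le C(K,N)|\nabla\hat u|^2$ and the stated form of $\hat\Gamma$. (Whether the $A$-term is written as $A(\hat u)$, as in the statement, or as its $\sigma$-conjugate is only a matter of notation; either way it is bounded and quadratic.)

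\textbf{Step 2: $W^{2,p}$ regularity across the seams.} This is the main obstacle. On each side of a seam, $\hat u$ is $W^{2,p}$, since it is a composition of $u$ with a smooth map and a reflection. To conclude $\hat u\in W^{2,p}$ across the seam, it suffices that $\hat u$ and its normal derivative $\partial_\theta\hat u$ have matching traces from both sides; then no singular measure appears in second derivatives.

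Matching values: on $\theta=\pi$ we have $u\in K$ and $\sigma|_K=\mathrm{Id}$, so $\hat u$ is continuous there.

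Matching normal derivatives: from the reflected side,
\[
\partial_\theta\hat u=-D\sigma|_u(\partial_\theta u).
\]
The free boundary condition says $\partial_\theta u\perp T_uK$. Since $D\sigma=-\mathrm{Id}$ on $T^\perp K$, this gives $\partial_\theta\hat u=\partial_\theta u$, so the normal derivatives agree. The tangential derivative $\partial_t\hat u=D\sigma(\partial_t u)=\partial_t u$ agrees as well, because $\partial_t u\in TK$ and $D\sigma=\mathrm{Id}$ on $TK$.

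The same argument works at $\theta=0\equiv2\pi$. Hence $\nabla\hat u$ has no jump, and $\hat u\in W^{2,p}$ by the standard gluing lemma for $W^{1,p}$ functions whose traces coincide, applied to $\nabla\hat u$. For $p=1$ the traces are interpreted in $L^1$, and for $p=\infty$ one uses Lipschitz gluing.

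\textbf{Step 3: conclusion.} Since $\hat u\in W^{2,p}$ has no distributional contributions on the seams, equation \eqref{equ:11} holds a.e. on all of $[T_1,T_2]\times S^1$. The right-hand side $\hat\Gamma$ is merely piecewise defined, which is allowed since it is only an $L^p$ function.
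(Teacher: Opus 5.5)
Your proof is correct and is essentially the paper's argument. The paper also derives the equation by a direct chain-rule computation on the reflected half $(\pi,2\pi)$, and it also gets $W^{2,p}$ regularity from the free boundary condition together with $D\sigma=\mathrm{Id}$ on $TK$ and $D\sigma=-\mathrm{Id}$ on $T^\perp K$; you work out the trace-matching of $\hat u$ and $\nabla\hat u$ across the seams in detail, where the paper only asserts it. The one difference is bookkeeping: the paper routes the computation through the nearest-point projection $\Pi_N$ (using $-D^2\Pi_N=A$) to produce the $A(\hat u)$ term directly. Your $D\sigma$-conjugated $A$-term agrees with it once you take the normal component of the second derivative of the $N$-valued map $\sigma\circ v$, which is the justification your ``matter of notation'' remark needs.
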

\begin{proof}
According to the properties of $D\sigma$, it is easy to see that $\hat{u}\in W^{2,p}([T_1,T_2]\times S^1)$ since $u$ satisfies free boundary condition.  Next, we derive the equation for $\hat{u}$.

Computing directly, we have  $$D\hat{u}(x)=D\Pi_N|_{\hat{u}(x)}\circ D\sigma|_{u(\rho(x))}\circ Du|_{\rho(x)}\circ D\rho|_{x},\ \ x=(t,\theta)\in (T_1,T_2)\times (\pi,2\pi)$$ which implies that
\begin{align*}
\Delta \hat{u}(x)=&D^2\Pi_N(\hat{u}(x))\big(D\hat{u}(x),D\hat{u}(x)\big)+D\Pi_N|_{\hat{u}(x)}\circ D^2\sigma|_{u(\rho(x))}( Du|_{\rho(x)}\circ D\rho|_{x}, Du|_{\rho(x)}\circ D\rho|_{x})\\
&+D\Pi_N|_{\hat{u}(x)}\circ D\sigma|_{u(\rho(x))}\circ \tau(u)(\rho(x))\\
=&-A(\hat{u})\big(D\hat{u},D\hat{u}\big)+D^2\sigma|_{\sigma(\hat{u})}( D\sigma \circ D\hat{u}, D\sigma \circ D\hat{u})+ D\sigma|_{\sigma(\hat{u})}\circ \tau(u)(\rho(x)),
\end{align*} where we used facts that $D\pi_N|_y:\R^N\to T_yN,\ y\in N$ is an orthogonal projection map, $$-D^2\Pi_N(y)=A(y):T_yN\otimes T_yN\to T^{\perp}_yN$$ is the second fundamental form.
\end{proof}

%

Now, we prove Theorem \ref{thm:01}.
\begin{proof}[\textbf{Proof of Theorem \ref{thm:01}:}]
We prove for the vertical half cylinder $Q_n$ case, since the cylinder $P_n$ case is similar and in fact easier.

Since $\lim_{n\to\infty}\|\tau(u_n)\|_{L^2(Q_n)}=0$ and \eqref{equ:03} holds, by Lemma \ref{lem:small-energy-regularity}, we know that for any $\epsilon>0$, there holds $$u_n([-T_n+1,T_n-1]\times [0,\pi])\subset K_{C(N)\epsilon},$$  when $n$ is big enough. Taking $\epsilon>0$ small such that $C(N)\epsilon\leq \delta_0$, then we can use the definition  to extend $u_n$ to $\widehat{u}_n$, which is defined on $[-T_n+1,T_n-1]\times S^1$ and satisfies equation \eqref{equ:11}.

Setting $$\hat{u}^*_n(t):=\frac{1}{2\pi}\int_0^{2\pi}\hat{u}_n(t,\theta)d\theta,\ t\in [-T_n+1,T_n-1],$$  by Lemma \ref{lem:small-energy-regularity}, we have
\begin{align}\label{inequ:09}
\|\hat{u}_n-\hat{u}_n^*\|_{L^\infty([-T_n+1,T_n-1]\times S^1)}&\leq \sup_{t\in [-T_n+1,T_n-1]}\|\hat{u}_n\|_{Osc(\{t\}\times S^1)}\notag\\
&\leq C(K,N)\sup_{t\in [-T_n+1,T_n-1]}\|u_n\|_{Osc(\{t\}\times [0,\pi])}\notag\\&\leq C(K,N)\epsilon,
\end{align}
when $n$ is big enough.

For any $t_n\in [-T_n+1,T_n-1]$, multiplying the equation \eqref{equ:11} by $\hat{u}_n-\hat{u}_n^*$ and integrating by parts, we get
\begin{align}\label{equ:12}
&\int_{[t_n+s,t_n-s]\times S^1}\Upsilon_{\hat{u}_n}(\nabla \hat{u}_n,\nabla \hat{u}_n)(\hat{u}_n-\hat{u}_n^*)dtd\theta-\int_{[t_n+s,t_n-s]\times S^1} \hat{\Gamma}(\hat{u}_n) (\hat{u}_n-\hat{u}_n^*) dtd\theta\notag\\&=\int_{[t_n+s,t_n-s]\times S^1}-\Delta \hat{u}_n (\hat{u}_n-\hat{u}_n^*) dtd\theta \notag\\
&=\int_{[t_n+s,t_n-s]\times S^1}\nabla \hat{u}_n\nabla (\hat{u}_n-\hat{u}_n^*)dtd\theta-\int_{\{t_n+s\}\times S^1}\frac{\partial \hat{u}_n}{\partial t}(\hat{u}_n-\hat{u}_n^*)d\theta+\int_{\{t_n-s\}\times S^1}\frac{\partial \hat{u}_n}{\partial t}(\hat{u}_n-\hat{u}_n^*)d\theta.
\end{align}

Using H\"{o}lder's inequality, we have
\begin{align}\label{inequ:07}
&\int_{[t_n+s,t_n-s]\times S^1}\nabla \hat{u}_n\nabla (\hat{u}_n-\hat{u}_n^*)dtd\theta\notag\\&=\int_{[t_n+s,t_n-s]\times S^1}\left(|\nabla \hat{u}_n|^2-\frac{\partial \hat{u}_n}{\partial t}\frac{\partial \hat{u}^*_n}{\partial t}\right)dtd\theta\notag\\
&\geq \int_{[t_n+s,t_n-s]\times S^1}|\nabla \hat{u}_n|^2dtd\theta-\left(\int_{[t_n+s,t_n-s]\times S^1}|\frac{\partial \hat{u}_n}{\partial t}|^2dtd\theta\right)^{\frac{1}{2}}\left(\int_{[t_n+s,t_n-s]\times S^1}|\frac{\partial \hat{u}_n^*}{\partial t}|^2dtd\theta\right)^{\frac{1}{2}}\notag\\ &\geq \int_{[t_n+s,t_n-s]\times S^1}\left(|\nabla \hat{u}_n|^2-|\frac{\partial \hat{u}_n}{\partial t}|^2\right)dtd\theta,
\end{align}
where the last inequality follows from the fact that
\begin{align*}
\left(\int_{[t_n+s,t_n-s]\times S^1}|\frac{\partial \hat{u}_n^*}{\partial t}|^2dtd\theta\right)^{\frac{1}{2}}&= \left(\int_{[t_n+s,t_n-s]\times S^1}\left|\frac{1}{2\pi}\int_0^{2\pi}\frac{\partial \hat{u}_n}{\partial t}d\theta\right|^2dtd\theta\right)^{\frac{1}{2}}\\
&\leq \left(\int_{[t_n+s,t_n-s]\times S^1}\frac{1}{2\pi}\int_0^{2\pi}|\frac{\partial \hat{u}_n}{\partial t}|^2d\theta dtd\theta\right)^{\frac{1}{2}}\\
&=\left(\int_{[t_n+s,t_n-s]\times S^1}|\frac{\partial \hat{u}_n}{\partial t}|^2dtd\theta\right)^{\frac{1}{2}}.
\end{align*}

Combining \eqref{equ:12} with \eqref{inequ:07} , we arrived at
\begin{align}\label{inequ:08}
&\int_{[t_n-s,t_n+s]\times S^1}\left(|\nabla \hat{u}_n|^2-|\frac{\partial \hat{u}_n}{\partial t}|^2\right)dtd\theta\notag\\ &\leq \int_{[t_n-s,t_n+s]\times S^1}\Upsilon_{\hat{u}_n}(\nabla \hat{u}_n,\nabla \hat{u}_n)(\hat{u}_n-\hat{u}_n^*)dtd\theta - \int_{[t_n-s,t_n+s]\times S^1} \hat{\Gamma}(\hat{u}_n) (\hat{u}_n-\hat{u}_n^*) dtd\theta\notag\\&\quad+\int_{\{t_n+s\}\times S^1}\frac{\partial \hat{u}_n}{\partial t}(\hat{u}_n-\hat{u}_n^*)d\theta-\int_{\{T_n-s\}\times S^1}\frac{\partial \hat{u}_n}{\partial t}(\hat{u}_n-\hat{u}_n^*)d\theta.
\end{align}

A direct computation yields that
\begin{align}
\int_{t_n-s}^{t_n+s}\int_\pi^{2\pi}\left(|\nabla \hat{u}_n|^2-|\frac{\partial \hat{u}_n}{\partial t}|^2\right)dtd\theta =\int_{t_n-s}^{t_n+s}\int^\pi_{0}\left(|D\sigma(u_n)\cdot\nabla u_n|^2-|D\sigma(u_n)\cdot\frac{\partial u_n}{\partial t}|^2\right)dtd\theta.
\end{align}
Note that
\begin{align*}
|D\sigma(u_n)\cdot\nabla u_n|^2&=\langle D\sigma\cdot\nabla u_n,D\sigma\cdot\nabla u_n\rangle\\&=\langle (D\sigma)^*\cdot D\sigma\cdot\nabla u_n,\nabla u_n\rangle\\&= \left\langle \bigg((D\sigma)^*\cdot D\sigma-Id\bigg)\cdot\nabla u_n,\nabla u_n\right\rangle+|\nabla u_n|^2,
\end{align*}
where $(D\sigma)^*$ is the adjoint operator of the linear operator $D\sigma:TN|_{K_{\delta_0}}\to TN|_{K_{\delta_0}}$.

Similarly, $$|D\sigma\cdot\frac{\partial u_n}{\partial t}|^2=\left\langle \bigg((D\sigma)^*\cdot D\sigma-Id\bigg)\cdot\frac{\partial u_n}{\partial t},\frac{\partial u_n}{\partial t}\right\rangle+|\frac{\partial u_n}{\partial t}|^2.$$

Noting that $(D\sigma)^*\cdot D\sigma|_K=Id$, by the continuity of eigenvalues of $(D\sigma)^*\cdot D\sigma$, we have that for any $\delta'>0$, there exists a constant  $\delta_1=\delta_1(\delta',K,N)$, such that for any $y\in K_{\delta_1}$ and $\xi\in TN|_{K_{\delta_1}}$, there holds $$\langle (D\sigma)^*|_{y}\cdot D\sigma|_y\cdot\xi, \xi\rangle\leq (1+\delta')|\xi|^2.$$

By Lemma \ref{lem:small-energy-regularity}, for any $\epsilon>0$, when $n$ is big enough, there holds $$\|dist(u_n,K)\|_{L^\infty([-T_n+1,T_n-1]\times (0,\pi))}\leq C\epsilon.$$ Thus, when $n$ is big enough, we have
$$\left\langle \bigg((D\sigma)^*|_{u_n(t,\theta)}\cdot D\sigma|_{u_n(t,\theta)}-Id\bigg)\cdot\xi, \xi\right\rangle\leq C\epsilon|\xi|^2, \quad  (t,\theta)\in [-T_n+1,T_n-1]\times (0,\pi),$$
which implies that
\begin{align*}
\int_{t_n+s}^{t_n-s}\int_\pi^{2\pi}\left(|\nabla \hat{u}_n|^2-|\frac{\partial \hat{u}_n}{\partial t}|^2\right)dtd\theta\geq \int_{t_n+s}^{t_n-s}\int^{\pi}_{0}\left(|\nabla u_n|^2-|\frac{\partial u_n}{\partial t}|^2\right)dtd\theta-C\epsilon \int_{t_n+s}^{t_n-s}\int^{\pi}_{0}|\nabla u_n|^2dtd\theta.
\end{align*}

Therefore, we have
\begin{align*}
&\int_{[t_n-s,t_n+s]\times S^1}\left(|\nabla \hat{u}_n|^2-|\frac{\partial \hat{u}_n}{\partial t}|^2\right)dtd\theta\\&=\int_{-T_n+1}^{T_n-1}\int^{\pi}_{0}\left(|\nabla u_n|^2-|\frac{\partial u_n}{\partial t}|^2\right)dtd\theta +\int_{t_n-s}^{t_n+s}\int_\pi^{2\pi}\left(|\nabla \hat{u}_n|^2-|\frac{\partial \hat{u}_n}{\partial t}|^2\right)dtd\theta \\&\geq 2\int_{t_n-s}^{t_n+s}\int^{\pi}_{0}\left(|\nabla u_n|^2-|\frac{\partial u_n}{\partial t}|^2\right)dtd\theta-C\epsilon \int_{t_n-s}^{t_n+s}\int^{\pi}_{0}|\nabla u_n|^2dtd\theta.
\end{align*}
Combining this with \eqref{inequ:08} and \eqref{inequ:09}, using Hölder's inequality and Poincaré's inequality, it yields
\begin{align}\label{inequ:01}
&2\int_{t_n-s}^{t_n+s}\int^{\pi}_{0}\left(|\nabla u_n|^2-|\frac{\partial u_n}{\partial t}|^2\right)dtd\theta\notag\\ &\leq \int_{[t_n-s,t_n+s]\times S^1}\Upsilon_{\hat{u}_n}(\nabla \hat{u}_n,\nabla \hat{u}_n)(\hat{u}_n-u_n^*)dtd\theta - \int_{[t_n-s,t_n+s]\times S^1} \hat{\Gamma}(\hat{u}_n) (\hat{u}_n-u_n^*) dtd\theta\notag\\&\quad+C\epsilon \int_{t_n-s}^{t_n+s}\int^{\pi}_{0}|\nabla u_n|^2dtd\theta+\int_{\{t_n+s\}\times S^1}\frac{\partial \hat{u}_n}{\partial t}(\hat{u}_n-u_n^*)d\theta-\int_{\{t_n-s\}\times S^1}\frac{\partial \hat{u}_n}{\partial t}(\hat{u}_n-u_n^*)d\theta\notag\\ &\leq C\epsilon \int_{t_n-s}^{t_n+s}\int^{\pi}_{0}|\nabla u_n|^2dtd\theta+C \left(\int_{t_n-s}^{t_n+s}\int^{\pi}_{0}|\tau(u_n)|^2dtd\theta\right)^{\frac{1}{2}} \left(\int_{t_n-s}^{t_n+s}\int^{\pi}_{0}|\frac{\partial u_n}{\partial\theta}|^2dtd\theta\right)^{\frac{1}{2}}\notag\\&\quad +\int_{\{t_n+s\}\times S^1}\frac{\partial \hat{u}_n}{\partial t}(\hat{u}_n-u_n^*)d\theta-\int_{\{t_n-s\}\times S^1}\frac{\partial \hat{u}_n}{\partial t}(\hat{u}_n-u_n^*)d\theta.
\end{align}
For above boundary terms, by Poincaré's inequality, we get
\begin{align}\label{inequ:02}
\left|\int_{\{t_n+s\}\times S^1}\frac{\partial \hat{u}_n}{\partial t}(\hat{u}_n-u_n^*)d\theta\right|&\leq \left(\int_{\{t_n+s\}\times S^1}|\frac{\partial \hat{u}_n}{\partial t}|^2d\theta\right)^{\frac{1}{2}}\left(\int_{\{t_n+s\}\times S^1}|\hat{u}_n-u_n^*|^2d\theta\right)^{\frac{1}{2}}\notag\\
&\leq \left(\int_{\{t_n+s\}\times S^1}|\frac{\partial \hat{u}_n}{\partial t}|^2d\theta\right)^{\frac{1}{2}}\left(\int_{\{t_n+s\}\times S^1}|\frac{\partial \hat{u}_n}{\partial \theta}|^2d\theta\right)^{\frac{1}{2}}\notag\\
&\leq \int_{\{t_n+s\}\times S^1}|\nabla \hat{u}_n|^2d\theta\leq 3\int_{\{t_n+s\}\times (0,\pi)}|\nabla u_n|^2d\theta.
\end{align}
Similarly,
\begin{equation}\label{inequ:03}
\left|\int_{\{t_n-s\}\times S^1}\frac{\partial \hat{u}_n}{\partial t}(\hat{u}_n-\hat{u}_n^*)d\theta\right|\leq 3\int_{\{t_n-s\}\times (0,\pi)}|\nabla u_n|^2d\theta.\end{equation}

Now taking $t_n=0$ and $s=T_n-1$ in \eqref{inequ:01}, when $n$ is big enough, we get
\begin{align*}
&\int_{-T_n+1}^{T_n-1}\int^{\pi}_{0}\left(|\nabla u_n|^2-|\frac{\partial u_n}{\partial t}|^2\right)dtd\theta\\
&\leq C\epsilon+C\|\tau(u_n)\|_{L^2(Q_n)}+C\int_{\{\{-T_n+1,T_n-1\}\}\times (0,\pi)}|\nabla u_n|^2d\theta\\
&\leq C\epsilon+C\|\tau(u_n)\|_{L^2(Q_n)}+CE(u_n;[-T_n,-T_n+2]\times (0,\pi))+CE(u_n;[T_n-2,T_n]\times (0,\pi))\\&\leq C\epsilon,
\end{align*} which means that $\lim_{n\to\infty}\int_{-T_n+1}^{T_n-1}\int^{\pi}_{0}|\frac{\partial u_n}{\partial \theta}|^2dtd\theta=0$.

Now, we consider two cases in the theorem. If we assume that $$\lim_{n\to\infty}\|\tau(u_n)\cdot \partial_t u_n\|_{L^1(Q_n)}\cdot T_n=0,$$ then
\begin{align*}
E(u_n;[-T_n+1,T_n-1]\times [0,\pi])&=\int_{-T_n+1}^{T_n-1}\int^{\pi}_{0}\left(\left|\frac{\partial u_n}{\partial t}\right|^2-\left|\frac{\partial u_n}{\partial \theta}\right|^2\right)dtd\theta+o(1)\\
&=\int_{-T_n+1}^{T_n-1}\left(\alpha_n+2\int_0^s\int^{\pi}_{0}\tau(u_n)\cdot \partial_t u_nd\theta dt\right)ds+o(1)\\&=2\alpha_n T_n+o(1),
\end{align*} where we used the fact that $$\left|\int_{-T_n+1}^{T_n-1}\int_0^s\int^{\pi}_{0}\tau(u_n)\cdot \partial_t u_nd\theta dtds\right|\leq 2\|\tau(u_n)\cdot \partial_t u_n\|_{L^1(Q_n)}\cdot T_n=o(1).$$ This yields the first conclusion of theorem.

If we assume that $$\lim_{n\to\infty}\|\ |\phi(u_n)|\ \|_{L^1(Q_n)}=0,$$ then
\begin{align*}
E(u_n;[-T_n+1,T_n-1]\times [0,\pi])&=\int_{-T_n+1}^{T_n-1}\int^{\pi}_{0}\left(\left|\frac{\partial u_n}{\partial t}\right|^2-\left|\frac{\partial u_n}{\partial \theta}\right|^2\right)dtd\theta+o(1)\\
&\leq \|\ |\phi(u_n)|\ \|_{L^1(Q_n)}+o(1)=o(1),
\end{align*} which immediately yields the second conclusion. We finished the proof of this theorem.
\end{proof}

From the proof of above theorem, we can further get the following energy decay lemma
\begin{lem}\label{lem:04}
Under the assumptions of Theorem \ref{thm:01}, for any $t_n\in [-T_n+1,T_n-1]$, denoting $d_n:=\min\{t_n-(-T_n),T_n-t_n\}$, we have
\begin{align*}
E(u_n;[t_n-1,t_n+1]\times (0,\pi))&\leq CE(u_n;Q_n)e^{-\frac{d_n}{6}} + C|\alpha_n|+C\int_{Q_n}|\tau(u_n)\cdot \partial_t u_n | dtd\theta  +C\|\tau(u_n)\|^2_{L^2(Q_n)}.
 \end{align*} or respectively
 \begin{align*}
E(u_n;[t_n-1,t_n+1]\times S^1)&\leq CE(u_n;P_n)e^{-\frac{d_n}{6}} + C|\alpha_n|+C\int_{P_n}|\tau(u_n)\cdot \partial_t u_n | dtd\theta  +C\|\tau(u_n)\|^2_{L^2(P_n)}.
 \end{align*}
\end{lem}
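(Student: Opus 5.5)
We will turn inequality \eqref{inequ:01} into a differential inequality for the energy on a growing window around $t_n$. Define
$$F(s):=\int_{t_n-s}^{t_n+s}\int_0^\pi|\nabla u_n|^2\,dt\,d\theta,\qquad 1\le s\le d_n-1 .$$
Then $F'(s)=\int_{\{t_n+s\}\times(0,\pi)}|\nabla u_n|^2d\theta+\int_{\{t_n-s\}\times(0,\pi)}|\nabla u_n|^2d\theta$.

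First we bound the angular energy. By \eqref{inequ:01}, \eqref{inequ:02} and \eqref{inequ:03},
$$2\int_{t_n-s}^{t_n+s}\int_0^\pi|\partial_\theta u_n|^2\le C\epsilon F(s)+C\|\tau(u_n)\|_{L^2(Q_n)}\Big(\int|\partial_\theta u_n|^2\Big)^{1/2}+3F'(s).$$
We absorb the tension term by Young's inequality, so that it costs only $C\|\tau(u_n)\|_{L^2(Q_n)}^2$.

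Next we convert the radial energy using the Pohozaev constant. By Lemma \ref{lem:pohozaev-constant} and Definition \ref{def:Pohozaev-cons}, on each slice
$$\int_{\{t\}\times[0,\pi]}\big(|\partial_tu_n|^2-|\partial_\theta u_n|^2\big)d\theta=\alpha_n+2\int_{[0,t]\times[0,\pi]}\tau\cdot\partial_tu_n .$$
Integrating over $t\in[t_n-s,t_n+s]$ gives
$$\int|\partial_tu_n|^2\le\int|\partial_\theta u_n|^2+2s|\alpha_n|+4s\|\tau\cdot\partial_tu_n\|_{L^1(Q_n)} .$$
Therefore
$$F(s)\le C\epsilon F(s)+3F'(s)+Cs\big(|\alpha_n|+\|\tau\cdot\partial_tu_n\|_{L^1}\big)+C\|\tau\|_{L^2}^2 .$$
Taking $\epsilon$ small, i.e.\ $n$ large, we get
$$F(s)\le\big(3+\delta\big)F'(s)+s\,B_n+D_n,$$
with $B_n=C(|\alpha_n|+\|\tau\cdot\partial_tu_n\|_{L^1})$ and $D_n=C\|\tau\|_{L^2}^2$. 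The constant $3+\delta$ is at most $6$. (In the cylinder case $P_n$ there is no reflection and the constant is better.)

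Then we integrate the ODE. Set $G(s)=F(s)-as B_n-b$, with constants chosen so that the linear inhomogeneity cancels: $a=1$ and $b=6B_n+D_n$ suffice. Then $G\le 6G'$, so $(e^{-s/6}G)'\ge0$, which gives
$$G(1)\le e^{-(S-1)/6}G(S)\le e^{-(S-1)/6}F(S)$$
with $S=d_n-1$. Since $F(S)\le 2E(u_n;Q_n)$, we obtain
$$E(u_n;[t_n-1,t_n+1]\times(0,\pi))\le CE(u_n;Q_n)e^{-d_n/6}+C|\alpha_n|+C\|\tau\cdot\partial_tu_n\|_{L^1}+C\|\tau\|_{L^2}^2 .$$

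The main obstacle is the linear-in-$s$ growth from $\alpha_n$. A naive Gronwall argument would let $sB_n$ accumulate into something like $d_nB_n$. The subtraction of the affine function $sB_n+b$ above is exactly what keeps the error term at $C B_n$ rather than $Cd_nB_n$, so we would check this computation carefully. One should also handle the windows near $\pm T_n$, where \eqref{inequ:01} was only derived on $[-T_n+1,T_n-1]$. This is why $d_n$ is reduced by a constant, which is absorbed into $C$.
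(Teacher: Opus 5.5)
Your proposal is correct and follows essentially the same route as the paper. From \eqref{inequ:01}--\eqref{inequ:03} and the Pohozaev identity you derive $F\le cF'+sB_n+D_n$ for the window energy and integrate it against an exponential weight; the only difference is that the paper integrates the inhomogeneous term directly against $e^{-s/4}$ (already an $O(B_n+D_n)$ error since $\int_1^\infty se^{-s/4}\,ds<\infty$), whereas you subtract an affine function, which is equivalent bookkeeping. Your restriction to $s\le d_n-1$ is slightly more careful than the paper about where \eqref{inequ:01} is valid, and the leftover case $d_n<2$ is trivial because then $E(u_n;[t_n-1,t_n+1]\times(0,\pi))\le e^{1/3}E(u_n;Q_n)e^{-d_n/6}$.
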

\begin{proof}
We just show the proof for the case of $Q_n$.

Denoting $$f(s):=\int_{t_n-s}^{t_n+s}\int_0^{\pi}|\nabla u_n|^2d\theta dt.$$ By \eqref{inequ:01}, \eqref{inequ:02}, \eqref{inequ:03},  using Hölder's inequality, we obtain that
\begin{align}\label{inequ:10}
&\int_{t_n-s}^{t_n+s}\int_0^\pi|\nabla u_n|^2dtd\theta+\int_{t_n-s}^{t_n+s}\int_0^\pi\left(\left|\frac{\partial u_n}{\partial\theta}\right|^2- \left|\frac{\partial u_n}{\partial t}\right|^2\right)dtd\theta\notag\\
&\leq C\epsilon\int_{t_n-s}^{t_n+s}\int_0^\pi|\nabla u_n|^2dtd\theta+\frac{1}{8}\int_{t_n-s}^{t_n+s}\int_0^\pi|\nabla u_n|^2dtd\theta\notag\\&\quad +C\|\tau(u_n)\|^2_{L^2(Q_n)}+3\int_{\{t_n-s,t_n+s\}\times (0,\pi)}|\nabla u_n|^2d\theta
\end{align}
Taking $C\epsilon\leq \frac{1}{8}$ in above inequality and using Hölder's inequality, we get
\begin{align*}
\frac{3}{4}f(s)&\leq  3f'(s) +C\|\tau(u_n)\|^2_{L^2(Q_n)}+\int_{t_n-s}^{t_n+s}\int_0^\pi\left( \left|\frac{\partial u_n}{\partial t}\right|^2-\left|\frac{\partial u_n}{\partial\theta}\right|^2\right)dtd\theta\\&\leq 3f'(s)+C\|\tau(u_n)\|^2_{L^2(Q_n)}+2s\alpha_n +2\int_{t_n-s}^{t_n+s}\int_{[0,\xi]\times [0,\pi]}\tau(u_n)\cdot \partial_t u_n dtd\theta d\xi\\&\leq 3f'(s)+C\|\tau(u_n)\|^2_{L^2(Q_n)}+2s\left(|\alpha_n|+2\int_{Q_n}|\tau(u_n)\cdot \partial_t u_n | dtd\theta \right)
\end{align*} which implies
\begin{equation}
\left(e^{-\frac{1}{4}s}f(s)\right)'\geq -\left(|\alpha_n|+2\int_{Q_n}|\tau(u_n)\cdot \partial_t u_n | dtd\theta \right)  se^{-\frac{1}{4}s}-C\|\tau(u_n)\|^2_{L^2(Q_n)} e^{-\frac{1}{4}s}
\end{equation}

Integrating from $1$ to $L=d_n:=\min\{t_n-(-T_n),T_n-t_n\}$, we get
\begin{align*}
f(1)&=\int_{t_n-1}^{t_n+1}\int_0^{\pi}|\nabla u_n|^2d\theta dt\\&\leq CE(u_n;Q_n)e^{-\frac{d_n}{6}} + C|\alpha_n|+C\int_{Q_n}|\tau(u_n)\cdot \partial_t u_n | dtd\theta  +C\|\tau(u_n)\|^2_{L^2(Q_n)}.
\end{align*}
\end{proof}

\begin{prop}\label{prop:03}
Under the assumptions of Theorem \ref{thm:01}, suppose $$\lim_{n\to\infty}\left(\|\tau(u_n)\|^2_{L^2(Q_n)}+\|\tau(u_n)\cdot \partial_t u_n\|_{L^1(Q_n)}\right)\cdot T^2_n=0$$ or respectively $$\lim_{n\to\infty}\left(\|\tau(u_n)\|^2_{L^2(P_n)}+\|\tau(u_n)\cdot \partial_t u_n\|_{L^1(P_n)}\right)\cdot T^2_n=0.$$ Then if $\lim_{n\to\infty} (\sqrt{|\alpha_n|}T_n)=0,$ we have $$\lim_{n\to\infty}osc_{Q_n}u_n=0\ \ or\ \ \lim_{n\to\infty}osc_{P_n}u_n=0.$$
\end{prop}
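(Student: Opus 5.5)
Our plan is to bound the oscillation of $u_n$ along the neck by the integral over $t$ of the slice $L^2$-norm of $\nabla u_n$. We then control that slice norm by combining Lemma~\ref{lem:04} with the vanishing Pohozaev constant. We treat $Q_n$; the $P_n$ case is identical (and easier, since no reflection is needed).

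\textbf{Step 1 (reduction to a length integral).} By Lemma~\ref{lem:small-energy-regularity}, the oscillation on each unit slab $[t-\tfrac12,t+\tfrac12]\times[0,\pi]$ is at most $C(\|\nabla u_n\|_{L^2([t-1,t+1]\times[0,\pi])}+\|\tau(u_n)\|_{L^2})$. The two end slabs are handled by the boundary version, and their contribution tends to $0$ by \eqref{equ:03}. Chaining slabs gives
$$\operatorname{osc}_{Q_n}u_n\le C\sum_{k}\Big(E(u_n;[k-1,k+1]\times[0,\pi])^{1/2}+\|\tau(u_n)\|_{L^2(Q_n)}\Big)+o(1),$$
where $k$ runs over about $2T_n$ integers. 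The $\tau$ part is at most $CT_n\|\tau(u_n)\|_{L^2}\to0$ by hypothesis. It therefore remains to show that $\sum_k E_k^{1/2}\to0$, where $E_k$ denotes the energy on the $k$-th unit slab.

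\textbf{Step 2 (apply Lemma~\ref{lem:04}).} For $t_n=k$, set $d_k=\min\{k+T_n,T_n-k\}$. Lemma~\ref{lem:04} gives
$$E_k\le C\Lambda e^{-d_k/6}+C|\alpha_n|+C\|\tau\cdot\partial_tu_n\|_{L^1}+C\|\tau\|_{L^2}^2,$$
so that
$$E_k^{1/2}\le Ce^{-d_k/12}+C\big(|\alpha_n|+\|\tau\cdot\partial_tu_n\|_{L^1}+\|\tau\|^2_{L^2}\big)^{1/2}.$$
Summing over $k$, the second group contributes at most
$$CT_n\big(\sqrt{|\alpha_n|}+(\|\tau\cdot\partial_t u_n\|_{L^1}+\|\tau\|_{L^2}^2)^{1/2}\big),$$
which tends to $0$ by the hypotheses $\sqrt{|\alpha_n|}T_n\to0$ and $(\|\tau\|^2_{L^2}+\|\tau\cdot\partial_tu_n\|_{L^1})T_n^2\to0$.

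\textbf{Step 3 (the exponential tail, the main obstacle).} The term $\sum_k e^{-d_k/12}$ is bounded, but it does not tend to zero, because the factor $E(u_n;Q_n)\le\Lambda$ is not small. We would fix this by rerunning the ODE argument in the proof of Lemma~\ref{lem:04} with the nonsmall total energy replaced by the energy near the ends. The differential inequality $\frac34 f\le 3f'+\cdots$ is integrated only up to $s=d_k-1$, where $f(d_k-1)\le E(u_n;Q_n)$. Alternatively, one can note that Theorem~\ref{thm:01}(1) (applicable since $\|\tau\cdot\partial_t u_n\|_{L^1}T_n\to0$) gives
$$E(u_n;Q_n)=2\alpha_nT_n+o(1)\le 2\sqrt{|\alpha_n|}\,T_n\sqrt{|\alpha_n|}+o(1)\to0.$$
So in fact $E(u_n;Q_n)\to0$. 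The exponential term is then at most $C\sqrt{E(u_n;Q_n)}\sum_k e^{-d_k/12}\le C\sqrt{E(u_n;Q_n)}\to0$. This uses the factor $E(u_n;Q_n)$ that Lemma~\ref{lem:04} explicitly retains. Combining Steps 1–3 gives $\operatorname{osc}_{Q_n}u_n\to0$.

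The delicate point is that the energy identity is needed to make the total energy small, so that the exponentially decaying tail becomes summable to zero. The other pieces are a direct summation of Lemma~\ref{lem:04} and the $\epsilon$-regularity estimate.
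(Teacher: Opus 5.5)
Your proposal is correct and follows the paper's proof. The paper also first uses Theorem~\ref{thm:01} together with $\sqrt{|\alpha_n|}T_n\to0$ to get $E(u_n;Q_n)\to0$. It then chains the slab-wise oscillation bounds from the $\epsilon$-regularity lemmas and Lemma~\ref{lem:04}: the exponential tail becomes $C\sqrt{E(u_n;Q_n)}\sum_i\big(e^{-i/12}+e^{-(2T_n-i)/12}\big)\to0$, and the uniform part $CT_n\big(|\alpha_n|+\|\tau(u_n)\cdot\partial_tu_n\|_{L^1}+\|\tau(u_n)\|^2_{L^2}\big)^{1/2}\to0$. The only difference is ordering: the paper invokes the energy identity at the outset, rather than after noticing that the bound $\Lambda$ does not make the tail vanish.
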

\begin{proof}
By Theorem \ref{thm:01} and assumption that $\lim_{n\to\infty} (\sqrt{|\alpha_n|}T_n)=0$, we first have that $$\lim_{n\to\infty}E(u_n;Q_n)=0.$$ Denoting $Q_n^i:=[-T_n+i,-T_n+i+1]\times [0,\pi]$, $i=1,2,...,2T_n-2$, by Lemma \ref{lem:small-energy-regularity-1}, Lemma \ref{lem:small-energy-regularity} and Lemma \ref{lem:04}, we get
\begin{align*}
osc_{Q_n^i}u_n&\leq C(\|\nabla u_n\|_{L^2(Q_n^{i-1}\cup Q_n^i\cup Q_n^{i+1})}+\|\tau(u_n)\|_{L^2(Q_n^{i-1}\cup Q_n^i\cup Q_n^{i+1})})\\
&\leq C\sqrt{E(u_n;Q_n)}\left(e^{-\frac{i}{12}}+e^{-\frac{2T_n-i}{12}}\right) + C\left(|\alpha_n|+\int_{Q_n}|\tau(u_n)\cdot \partial_t u_n | dtd\theta  +\|\tau(u_n)\|^2_{L^2(Q_n)}\right)^{\frac{1}{2}}
\end{align*} which implies that
\begin{align*}
& osc_{[-T_n+1,T_n-1]\times [0,\pi]}u_n\leq \sum_{i=1}^{2T_n-2}osc_{Q_n^i}u_n\\&\leq C\sqrt{E(u_n;Q_n)}\sum_{i=1}^{2T_n-2}\left(e^{-\frac{i}{12}}+e^{-\frac{2T_n-i}{12}}\right) + C\left(|\alpha_n|+\int_{Q_n}|\tau(u_n)\cdot \partial_t u_n | dtd\theta  +\|\tau(u_n)\|^2_{L^2(Q_n)}\right)^{\frac{1}{2}}\cdot T_n\\
&=o(1).
\end{align*}
Then by \eqref{equ:03} and Lemma \ref{lem:small-energy-regularity}, we immediately have $\lim_{n\to\infty} osc_{Q_n}u_n=0$.
\end{proof}

\

\section{Refined neck analysis: convergence to a geodesic or geodesic-like curve}\label{sec:refined-analysis}

\

In this section, we will apply a refined neck analysis and prove that the limits of necks are either some geodesic or geodesic-like curves.  Theorem \ref{thm-main:01} and Theorem \ref{thm-main:03} will be proved in this section.

Define $$\mu:=\lim_{n\to\infty}\sqrt{|\alpha_n|}T_n.$$
In order to prove Theorem \ref{thm-main:01} and Theorem \ref{thm-main:03}, we first show the following results
\begin{thm}\label{thm:02}
Let $N$ be a compact Riemannian manifold and $K\subset N$ is a smooth submanifold. The following two alternatives hold.
\begin{itemize}
\item[(1)] Let  $u_n:P_n\to N$ be a sequence of maps from  cylinder $P_n=[-T_n,T_n]\times S^1$  with free boundary $u_n(\{-T_n,T_n\}\times S^1)$ on $K$, satisfying  $$E(u_n;P_n)\leq\Lambda,\ \ \lim_{n\to\infty}\left(\|\tau(u_n)\|^2_{L^2(P_n)}+\|\tau(u_n)\cdot \partial_t u_n\|_{L^1(P_n)}\right)\cdot T^2_n=0.$$ If there is no energy concentration for $u_n$, i.e. \eqref{equ:04} holds, then we have:

    \

    \begin{itemize}
    \item[(1-1)] when $0\leq \mu<\infty$, $u_n$ converges to a geodesic of length $\sqrt{\frac{2}{\pi}}\mu$.

    \

    \item[(1-2)] when $\mu=\infty$, the neck contains at least an infinite length geodesic.
    \end{itemize}

\

   \item[(2)] Let  $u_n:Q_n\to N$ be a sequence of maps from vertical half cylinder $Q_n:=[-T_n,T_n]\times [0,\pi]$ with free boundary $u_n([-T_n,T_n]\times \{0,\pi\})$ on $K$, satisfying $$E(u_n;Q_n)\leq\Lambda,\ \ \lim_{n\to\infty}\left(\|\tau(u_n)\|^2_{L^2(Q_n)}+\|\tau(u_n)\cdot \partial_t u_n\|_{L^1(Q_n)}\right)\cdot T^2_n=0.$$ If there is no energy concentration for $u_n$, i.e. \eqref{equ:03} holds, then  we have:

\

    \begin{itemize}
    \item[(2-1)] when $0\leq \mu<\infty$, $u_n$ converges to a geodesic-like curve (see definition \eqref{def:geodesic-like-curve}) of length $\frac{2}{\sqrt{\pi}}\mu$.

    \

    \item[(2-2)] when $\mu=\infty$, the neck contains at least an infinite length geodesic-like curve.
    \end{itemize}

\end{itemize}
\end{thm}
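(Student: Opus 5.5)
Rescale the neck to unit length in the $t$-direction and show that the $\theta$-averages converge to a curve parametrized proportionally to arc length. Set $v_n(s,\theta):=u_n(T_n s,\theta)$ for $s\in[-1,1]$. Let $u_n^*(t):=\frac{1}{|I|}\int_I u_n(t,\theta)d\theta$, where $I=S^1$ for $P_n$ and $I=[0,\pi]$ for $Q_n$. In the $Q_n$ case we work with the reflected map $\hat u_n$ of \eqref{def:function} and take $\hat u_n^*$ its $S^1$-average. By Lemma \ref{lem:small-energy-regularity-1} and Lemma \ref{lem:small-energy-regularity}, together with \eqref{equ:04} and \eqref{equ:03}, $\|u_n-u_n^*\|_{L^\infty}\to 0$. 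So the limiting image is determined by the curves $\gamma_n(s):=u_n^*(T_ns)$, and in the $Q_n$ case these lie within $o(1)$ of $K$ and converge into $K$.

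Step 1 is the tangential energy bound. I will redo the argument behind \eqref{inequ:01} and Lemma \ref{lem:04} on every unit slab $[t-1,t+1]$. By the decay estimate,
\[\int_{[t-1,t+1]\times I}|\nabla u_n|^2\le C\Lambda e^{-d(t)/6}+C\big(|\alpha_n|+\|\tau\cdot\partial_tu_n\|_{L^1}+\|\tau\|_{L^2}^2\big).\]
The hypotheses make the last three terms $O(\mu^2T_n^{-2})+o(T_n^{-2})$. Feeding this back into \eqref{inequ:01} on slabs, where the $\epsilon$ factor now comes from the oscillation bound $C(\text{slab energy})^{1/2}$, I aim for
\[\int_{[t-1,t+1]\times I}|\partial_\theta u_n|^2=o(T_n^{-2})\]
away from the ends $|t|\ge T_n-L$. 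Then, by the definition of $\alpha_n$ and the hypotheses,
\[\int_{\{t\}\times I}|\partial_tu_n|^2d\theta=\alpha_n+o(T_n^{-2}).\]
This forces $\alpha_n\ge -o(T_n^{-2})$. Together with Cauchy--Schwarz it gives $|\gamma_n'(s)|^2=T_n^2|I|^{-2}\big|\int_I\partial_tu_n\big|^2\le T_n^2|I|^{-1}(\alpha_n+o(T_n^{-2}))$. So $|\gamma_n'|$ is uniformly bounded by roughly $\mu|I|^{-1/2}$ when $\mu<\infty$. Combined with the $o(T_n^{-2})$ bound on the $\theta$-derivative, $\partial_tu_n$ is nearly $\theta$-independent, and equality holds asymptotically: $|\gamma_n'|\to\mu/\sqrt{|I|}$ in $L^2$. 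The end pieces of length $L$ contribute oscillation $\to0$ by the exponential decay, as in Proposition \ref{prop:03}.

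Step 2 is the equation of the limit curve. Average the equation $\Delta u_n+A(u_n)(\nabla u_n,\nabla u_n)=\tau(u_n)$ in $\theta$, using \eqref{equ:11} in the $Q_n$ case, and multiply by $T_n^2$. This gives
\[\gamma_n''=-T_n^2\frac{1}{|I|}\int_I\Upsilon(\nabla u_n,\nabla u_n)+T_n^2\frac1{|I|}\int_I\hat\Gamma.\]
The tension term tends to $0$ in $L^2(ds)$ because $T_n^{3/2}\|\tau\|_{L^2}\cdot T_n^{1/2}$ is controlled; this is where the hypothesis $\|\tau\|_{L^2}^2T_n^2\to0$ is used, possibly after a weak formulation. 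The $\partial_\theta$ contributions are $o(1)$ by Step 1. Replacing $u_n$ by $\gamma_n$, the quadratic term converges to $A(\gamma)(\gamma',\gamma')$. So $\gamma_n$ is bounded in $W^{2,1}$ and converges in $C^0$, and weakly in $W^{1,2}$, to $\gamma$ solving $\gamma''=-A(\gamma)(\gamma',\gamma')$, which is a geodesic of constant speed.

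In the $Q_n$ case the average over $S^1$ mixes the two halves. On $(\pi,2\pi)$, $\Upsilon$ contains $-D^2\sigma(D\sigma\cdot,D\sigma\cdot)$. Since $D\sigma=\mathrm{Id}$ on $TK$ and the limit velocity is tangent to $K$, averaging the two halves gives
\[\gamma''=-A(\gamma)(\gamma',\gamma')+\tfrac12D^2\sigma(\gamma)(\gamma',\gamma').\]
This is exactly \eqref{def:geodesic-like-curve}; note that $\gamma$ here is the average of $\hat u_n$, which agrees with $u_n$ up to $o(1)$ near $K$.

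Step 3 computes the length, which is $\int_{-1}^1|\gamma'|ds=2\mu/\sqrt{|I|}$. For $I=S^1$ this is $2\mu/\sqrt{2\pi}=\sqrt{2/\pi}\,\mu$. For $I=[0,\pi]$ it is $2\mu/\sqrt\pi$. Both match the statement. When $\mu=\infty$, rescale instead by $\tilde T_n$ with $\sqrt{|\alpha_n|}\tilde T_n\to\mu'$, any finite $\mu'$, on subintervals of length $2\tilde T_n\ll T_n$. The same argument yields geodesic segments of every length $\mu'$ inside the neck, hence an infinite-length geodesic, respectively geodesic-like curve, by a diagonal argument.

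The main obstacle is Step 1: getting the tangential energy to be genuinely $o(T_n^{-2})$ on slabs rather than merely $O(|\alpha_n|)$. I would try to exploit that $|u_n-u_n^*|$ is itself small, of size $O(T_n^{-1})$, in the inequality \eqref{inequ:01}, which makes the quadratic error term of higher order.
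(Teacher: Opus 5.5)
The gap is where you place it, and the remedy you suggest targets the wrong term.

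On a window of fixed length, \eqref{inequ:01} contains the slice terms $\int_{\{t\pm1\}\times S^1}\partial_t\hat u_n\cdot(\hat u_n-\hat u_n^*)\,d\theta$. The best bound available for them is $\|\partial_t\hat u_n\|_{L^2(\mathrm{slice})}\|\partial_\theta\hat u_n\|_{L^2(\mathrm{slice})}$, and the first factor has size $\sqrt{|\alpha_n|}$. This is linear in the very quantity you want to show is $o(\sqrt{|\alpha_n|})$. So on unit slabs the inequality is circular and returns only $O(|\alpha_n|)$. The smallness of $\hat u_n-\hat u_n^*$ improves only the cubic term, which is harmless anyway.

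The missing idea is the paper's Lemma \ref{lem:02}. Pass to windows $[t_n-s_n,t_n+s_n]$ with $s_n\to\infty$ slowly (so that $s_n$ times the unit-slab oscillation tends to $0$) and $s_n=o(T_n^{1/4})$.
\begin{itemize}
\item Since $\mu>0$ gives $|\alpha_n|^{-1}\le CT_n^2$, Lemma \ref{lem:04} yields \eqref{inequ:06}.
\item Then \eqref{inequ:01} bounds the tangential energy of the whole window by $C|\alpha_n|$, independently of $s_n$. The slice terms are paid only once, and the cubic term is $o(|\alpha_n|)$.
\item By Fubini there is $s_n'\in(s_n/2,s_n)$ whose two slices carry tangential energy $O(|\alpha_n|/s_n)=o(|\alpha_n|)$. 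At those slices the slice terms are $o(|\alpha_n|)$.
\item Rerunning \eqref{inequ:01} on $[t_n-s_n',t_n+s_n']$ gives tangential energy $o(|\alpha_n|)$ there.
\end{itemize}
Your slicewise identity for $\int_{\{t\}\times I}|\partial_t u_n|^2$ and the uniform convergence $|\dot u_n^*|/\sqrt{|\alpha_n|}\to|I|^{-1/2}$ do not follow from slab bounds alone. They come from Lemma \ref{lem:03}, i.e.\ from weak $W^{2,2}$ compactness of $(\hat u_n(t_n+\cdot,\cdot)-\hat u_n(t_n,0))/\sqrt{|\alpha_n|}$. Note also that $C\Lambda e^{-d/6}=O(T_n^{-2})$ only when $d\gtrsim\log T_n$, so the discarded end pieces must grow. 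The paper takes them of length $T_n^{1/4}$ and controls their oscillation as in Proposition \ref{prop:03}.

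Your Step 2 has a second, independent problem: the averaged tension term does not vanish under $\|\tau(u_n)\|_{L^2}^2T_n^2\to0$. In your variable $s\in[-1,1]$ its $L^1(ds)$ norm is $T_n\int_{-T_n}^{T_n}\big|\frac{1}{|I|}\int_I\hat\Gamma\,d\theta\big|\,dt$. This is only bounded by $CT_n^{3/2}\|\tau(u_n)\|_{L^2}$. A weak formulation cannot help, because a tension pushing consistently to one side of the curve accumulates along the neck.

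The paper's \eqref{inequ:14} has the same defect: the passage to $\frac{C}{\sqrt{|\alpha_n|}}\|\tau(u_n)\|_{L^2(Q_n)}$ loses a factor of order $\sqrt{T_n}$. The conclusion in fact fails under the stated hypothesis. In a flat torus $N$, let $\Gamma$ be a unit-speed arc of length $L$ with constant geodesic curvature $\kappa\neq0$ and unit normal $\nu$. Let $K$ be a closed curve meeting $\Gamma$ orthogonally at its endpoints, and set $u_n(t,\theta)=\Gamma(\frac{L}{2T_n}(t+T_n))$ on $P_n$. Then:
\begin{itemize}
\item $\tau(u_n)=\kappa(\frac{L}{2T_n})^2\nu\perp\partial_tu_n$, so $\tau(u_n)\cdot\partial_tu_n=0$;
\item $\|\tau(u_n)\|_{L^2(P_n)}^2T_n^2=O(T_n^{-1})$;
\item there is no energy concentration, and $\mu=\sqrt{\pi/2}\,L$.
\end{itemize}
Yet the limit curve $\Gamma$ is not a geodesic. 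Identifying the limit as a geodesic or geodesic-like curve therefore needs a stronger assumption, e.g.\ $T_n^{3/2}\|\tau(u_n)\|_{L^2}\to0$. The local analysis and the length formula do survive under the stated hypothesis.
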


\

For the proof of Theorem \ref{thm:02}, we just show the details for the second case since the first one is similar and easier.

\

We start by proving some lemmas.

\begin{lem}\label{lem:02}
Under assumptions of Theorem \ref{thm:02}, if $\mu>0,$ then for any $k>0$ and $t_n\in [- T_n+T_n^{\frac{1}{4}}, T_n-T_n^{\frac{1}{4}}]$, there holds
\begin{align*}
\lim_{n\to\infty}\frac{1}{|\alpha_n|}\int_{[t_n-k,t_n+k]\times [0,\pi]}\left|\frac{\partial u_n}{\partial\theta}\right|^2dtd\theta=0.
\end{align*}
\end{lem}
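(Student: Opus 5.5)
The plan is to show that the tangential energy satisfies a second-order differential inequality with an exponential decay rate, with forcing of size only $\epsilon|\alpha_n|+o(|\alpha_n|)$. Integrating it over a window of length about $T_n^{1/4}$ around $t_n$ then gives the claim.

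\emph{Setup.} The standing assumptions give $\|\tau(u_n)\|^2_{L^2(Q_n)}T_n^2\to0$ and $\|\tau(u_n)\cdot\partial_tu_n\|_{L^1(Q_n)}T_n^2\to0$. Together with $\mu>0$, which means $|\alpha_n|\ge c\,T_n^{-2}$ for $n$ large, this yields
\[
\|\tau(u_n)\|^2_{L^2(Q_n)}+\|\tau(u_n)\cdot\partial_tu_n\|_{L^1(Q_n)}=o(|\alpha_n|).
\]
So every error coming from the tension field is negligible compared with $|\alpha_n|$. Likewise $\Lambda e^{-cT_n^{1/4}}=o(T_n^{-2})=o(|\alpha_n|)$. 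By Lemma \ref{lem:small-energy-regularity} and \eqref{equ:03}, for $n$ large the image lies in $K_{\delta_0}$, so we may work with the reflected map $\hat u_n$ on $[-T_n+1,T_n-1]\times S^1$, which solves \eqref{equ:11}. We set $v_n=\hat u_n-\hat u_n^*$ and $h(t)=\int_{\{t\}\times S^1}|v_n|^2d\theta$.

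\emph{Key steps.}
\begin{enumerate}[(i)]
\item By Proposition \ref{prop:01}, $v_n$ satisfies $\Delta v_n=F_n-F_n^*$, where $F_n=\hat\Gamma-\Upsilon_{\hat u_n}(\nabla\hat u_n,\nabla\hat u_n)$ and $F_n^*$ is its $\theta$-average.
\item Differentiating twice and using that $v_n$ has zero mean on each circle, we get
\[
h''=2\!\int|\partial_tv_n|^2+2\!\int|\partial_\theta v_n|^2+2\!\int v_n\cdot F_n
\ge 2\!\int|\partial_\theta \hat u_n|^2+2h-2\!\int|v_n||F_n|,
\]
using Wirtinger's inequality $\int|\partial_\theta v_n|^2\ge\int|v_n|^2$ for the middle term.
\item The cubic term is the main obstacle. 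I would bound $\int_{\{t\}\times S^1}|v_n||\nabla\hat u_n|^2$ by $\|v_n\|_{L^\infty}$, which is at most $C\epsilon$ by \eqref{inequ:09}, times $\int_{\{t\}\times S^1}(|\partial_t\hat u_n|^2+|\partial_\theta\hat u_n|^2)$.
\item The radial part of that slice integral must not be treated as a full-energy term. Instead, I control it by Lemma \ref{lem:pohozaev-constant}:
\[
\int_{\{t\}\times[0,\pi]}|\partial_tu_n|^2=\alpha_n+\int_{\{t\}\times[0,\pi]}|\partial_\theta u_n|^2+O\big(\|\tau\cdot\partial_tu_n\|_{L^1}\big).
\]
The reflection changes norms only by a factor $1+C\epsilon$, as in the proof of Theorem \ref{thm:01}. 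Hence the radial part is at most $C|\alpha_n|+C\int|\partial_\theta\hat u_n|^2+o(|\alpha_n|)$. The tension term is handled pointwise by $|v_n||\hat\Gamma|\le \epsilon|\hat\Gamma|$, and after integrating in $t$ over the window its contribution is $o(|\alpha_n|)$.
\item Combining, and writing $G(t)=h(t)+\int_{\{t\}\times S^1}|\partial_\theta\hat u_n|^2$ (or working with the integrated quantity, to avoid regularity issues), we aim for
\[
h''\ge(2-C\epsilon)\,G-C\epsilon|\alpha_n|-e_n(t),\qquad \int e_n=o(|\alpha_n|)\ \text{over the window}.
\]
This is an ODE inequality of the form $h''\ge c\,h-(\text{small forcing})$.
\end{enumerate}

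\emph{Conclusion.} Apply the maximum principle / comparison with $A\cosh(\sqrt c(t-t_n))+B$ on $[t_n-T_n^{1/4},t_n+T_n^{1/4}]$. This interval lies inside $Q_n$ by the hypothesis on $t_n$. At its endpoints $h\le C\Lambda$ by \eqref{inequ:09} and Poincaré. The comparison gives
\[
\sup_{|t-t_n|\le T_n^{1/4}/2}h\le C\Lambda e^{-cT_n^{1/4}}+C\epsilon|\alpha_n|+o(|\alpha_n|).
\]
Integrating the differential inequality over $[t_n-k,t_n+k]$, and using the boundary values of $h'$ from interior estimates (Lemma \ref{lem:small-energy-regularity}), gives
\[
\int_{[t_n-k,t_n+k]\times[0,\pi]}|\partial_\theta u_n|^2\le C_k\big(\Lambda e^{-cT_n^{1/4}}+\epsilon|\alpha_n|\big)+o(|\alpha_n|).
\]
Dividing by $|\alpha_n|$, letting $n\to\infty$ and then $\epsilon\to0$ finishes the proof; this uses that $\epsilon$ can be taken arbitrarily small for large $n$ by \eqref{equ:03}.

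The delicate point is step (iii)–(iv): the cubic nonlinearity and the reflection distortion must be absorbed using $\alpha_n$ rather than the full energy. Otherwise the error would be of order $\epsilon\, E$, which is not $o(|\alpha_n|)$.
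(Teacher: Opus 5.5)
Your route is genuinely different from the paper's, and its structure is sound, but step (iv) fails as written for the tension term.

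\textbf{Comparison with the paper.} The paper works only with the integrated identity. It tests \eqref{equ:11} against $\hat u_n-\hat u_n^*$ on $[t_n-s,t_n+s]\times S^1$; this is exactly the $t$-integral of your formula for $h''$, with $\frac12 h'$ producing the boundary terms of \eqref{inequ:02}. It then uses Lemma \ref{lem:04} to get $\int_{[t_n-s,t_n+s]\times[0,\pi]}|\nabla u_n|^2\le C(1+s)|\alpha_n|$, as in \eqref{inequ:06}. It lets $s_n\to\infty$ so slowly that the cubic error $C\beta_n s_n|\alpha_n|$ is $o(|\alpha_n|)$. Finally, Fubini gives two slices $t_n\pm s_n'$ carrying $o(|\alpha_n|)$ tangential energy, on which the Pohozaev identity makes the boundary terms $o(|\alpha_n|)$.

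You instead apply the Pohozaev identity on every slice, so the cubic error is bounded by $C\epsilon\bigl(|\alpha_n|+\int_{\{t\}\times S^1}|\partial_\theta\hat u_n|^2\bigr)$ pointwise in $t$. The exponential decay built into $h''\ge ch$ then disposes of the data at the ends of the window. This removes the need to balance $s_n$ against $\beta_n$ and to select good slices. It also gives $\sup_{|t-t_n|\le k}h=o(|\alpha_n|)$, which is the $L^2$ content of the second statement of Lemma \ref{lem:03}. The cost is a comparison argument for a $W^{2,1}$ function.

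\textbf{The gap in step (iv).} From $|v_n||\hat\Gamma|\le\epsilon|\hat\Gamma|$ you only get $\epsilon\|\hat\Gamma\|_{L^1}$. The hypotheses bound this by $\epsilon\|\tau(u_n)\|_{L^2}=o(\epsilon T_n^{-1})$ on a unit strip, and by $o(\epsilon T_n^{-7/8})$ on your window. Meanwhile $|\alpha_n|\sim\mu^2T_n^{-2}$ when $\mu<\infty$. So this is not shown to be $o(|\alpha_n|)$: you would need a rate like $\epsilon\lesssim T_n^{-1}$, which \eqref{inequ:09} does not provide.

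Since $\tau$ is controlled only quadratically ($\|\tau(u_n)\|^2_{L^2}T_n^2\to0$), you must pair it quadratically, as the paper does in \eqref{inequ:01}. Write $2\int_{\{t\}\times S^1}|v_n||\hat\Gamma|\le\frac12 h(t)+2\int_{\{t\}\times S^1}|\hat\Gamma|^2$. Absorb $\frac12 h$ into the Wirtinger term, recalling $h\le\int_{\{t\}\times S^1}|\partial_\theta\hat u_n|^2$. The remaining forcing has total $t$-integral at most $C\|\tau(u_n)\|^2_{L^2(Q_n)}=o(|\alpha_n|)$. This suffices for the comparison: the Dirichlet Green's function of $-\partial_t^2+c$ on the window is bounded by $Ce^{-\sqrt c|t-s|}$, so an $L^1$ bound on the forcing gives a sup bound on $h$.

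Alternatively, use the local bound $\|v_n\|_{L^\infty}\le C\sqrt{|\alpha_n|}$ on strips far from the ends, which follows from \eqref{inequ:06} and Lemma \ref{lem:small-energy-regularity}. Combine it with the same exponential weight, not with an integral over the whole window.

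\textbf{Two smaller slips.}
\begin{itemize}
\item In (ii), the single term $2\int|\partial_\theta v_n|^2$ cannot yield both $2\int|\partial_\theta\hat u_n|^2$ and $2h$. Take, for example, $\int|\partial_\theta\hat u_n|^2+h$ instead.
\item The slice bound $\int_{\{t_n\pm k\}\times S^1}|\nabla\hat u_n|^2\le C|\alpha_n|$, needed to control $h'(t_n\pm k)$, comes from \eqref{inequ:06} plus the trace estimate, not from Lemma \ref{lem:small-energy-regularity} alone. Integrating the inequality against a cutoff in $t$, instead of evaluating $h'$, avoids this.
\end{itemize}
With these repairs your argument proves the lemma.
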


\begin{proof}
Denoting $$\beta_n:=\sup_{-T_n\leq t\leq T_n-1}E(u_n, [t,t+1]\times [0,\pi]).$$ Then \eqref{equ:03} implies  $\lim_{n\to\infty}\beta_n=0$. Next, we divide the proof into two steps.

\

\noindent\textbf{Step 1:} We firstly claim that for any $s_n>0$ such that $$\lim_{n\to\infty}s_n=+\infty,\ \ s_n=o(1)T^{\frac{1}{4}}_n,\ \ s_n\beta_n=o(1),$$ there holds
\begin{equation}\label{equ:05}
\frac{1}{|\alpha_n|}\int_{[t_n-s_n,t_n+s_n]\times [0,\pi]}\left|\frac{\partial u_n}{\partial\theta}\right|^2dtd\theta\leq C,
\end{equation} where $C>0$ is independent of $n$.

\

In fact, since $\mu:=\lim_{n\to\infty} (\sqrt{\alpha_n}T_n)>0,$ by Lemma \ref{lem:04}, we get
\begin{align}\label{inequ:06}
\frac{1}{|\alpha_n|}\int_{[t_n-\sigma,t_n+\sigma]\times [0,\pi]}\left|\nabla u_n\right|^2dtd\theta &\leq C\sigma e^{-\frac{1}{6}T^{\frac{1}{4}}_n}\cdot \frac{1}{|\alpha_n|}+C\sigma+C\frac{\|\tau(u_n)\cdot \partial_tu_n\|_{L^1(Q_n)}+\|\tau(u_n)\|^2_{L^2(Q_n)}}{|\alpha_n|}\notag \\& \leq C\sigma e^{-\frac{1}{6}T^{\frac{1}{4}}_n}\cdot T_n^2+C\sigma+C\left(\|\tau(u_n)\cdot \partial_tu_n\|_{L^1(Q_n)}+\|\tau(u_n)\|^2_{L^2(Q_n)}\right) T_n^2\notag\\&\leq C(1+\sigma).
\end{align}

By \eqref{inequ:01}-\eqref{inequ:03}, \eqref{inequ:06}, using Hölder's inequality and the the fact that
\begin{equation}\label{inequ:04}
\frac{\|\tau(u_n)\cdot \partial_tu_n\|_{L^1(Q_n)}+\|\tau(u_n)\|^2_{L^2(Q_n^+)}}{|\alpha_n|}\leq C\|\tau(u_n)\|^2_{L^2(Q_n^+)}T_n^2=o(1),
\end{equation}
 we obtain
\begin{align}\label{inequ:05}
&\int_{t_n-s_n}^{t_n+s_n}\int^{\pi}_{0}\left(|\nabla u_n|^2-|\frac{\partial u_n}{\partial t}|^2\right)dtd\theta\notag\\  &\leq C\beta_n \int_{t_n-s_n}^{t_n+s_n}\int^{\pi}_{0}|\nabla u_n|^2dtd\theta+C \left(\int_{t_n-s_n}^{t_n+s_n}\int^{\pi}_{0}|\tau(u_n)|^2dtd\theta\right)^{\frac{1}{2}} \left(\int_{t_n-s_n}^{t_n+s_n}\int^{\pi}_{0}|\frac{\partial u_n}{\partial \theta}|^2dtd\theta\right)^{\frac{1}{2}} \notag\\&\quad +C\int_{[t_n+s_n-1,t_n+s_n+1]\times [0,\pi]}|\nabla u_n|^2dtd\theta+ C\int_{[t_n-s_n-1,t_n-s_n+1]\times [0,\pi]}|\nabla u_n|^2dtd\theta\notag
\\ &\leq \frac{1}{2}\int_{t_n-s_n}^{t_n+s_n}\int^{\pi}_{0}|\frac{\partial u_n}{\partial \theta}|^2dtd\theta +C(|\alpha_n|+\|\tau(u_n)\|_{L^2}^2)(\beta_n  s_n+1)\notag\\&\leq \frac{1}{2}\int_{t_n-s_n}^{t_n+s_n}\int^{\pi}_{0}|\frac{\partial u_n}{\partial \theta}|^2dtd\theta+C (|\alpha_n|+\|\tau(u_n)\|_{L^2}^2),
\end{align} which implies \eqref{equ:05}.

\

\noindent\textbf{Step 2:} We prove that for any positive constant $k>0$, there holds
\begin{align}\label{equ:10}
\lim_{n\to\infty}\frac{1}{|\alpha_n|}\int_{[t_n-k,t_n+k]\times [0,\pi]}\left|\frac{\partial u_n}{\partial\theta}\right|^2dtd\theta=0.
\end{align}

\

By \eqref{equ:05}, \eqref{inequ:04} and Fubini's theorem, one can see that there exists $s_n'\in (\frac{1}{2}s_n,s_n)$ such that
\begin{equation}\label{equ:06}
\frac{1}{|\alpha_n|}\int_{\{t_n-s'_n\}\times [0,\pi]}\left|\frac{\partial u_n}{\partial\theta}\right|^2d\theta+ \frac{1}{|\alpha_n|}\int_{\{t_n+s'_n\}\times [0,\pi]}\left|\frac{\partial u_n}{\partial\theta}\right|^2d\theta=o(1)
\end{equation}

By the definition of $\alpha_n$ and Hölder's inequality, for ant $t\in [-T_n,T_n]$, we have
\begin{align*}
\int_{\{t\}\times (0,\pi)}|\partial_t u_n|^2d\theta&\leq |\alpha_n|+\int_{\{t\}\times [0,\pi]}|\partial_\theta u_n|^2d\theta+ 2\left|\int_{[0,t]\times (0,\pi)}\partial_t u_n\cdot \tau(u_n)d\theta\right|\\
&= |\alpha_n|+\int_{\{t\}\times (0,\pi)}|\partial_\theta u_n|^2d\theta+ o(1)|\alpha_n|
\end{align*} which implies
\begin{align}\label{equ:08}
\left|\int_{\{t_n+s_n'\}\times S^1}\frac{\partial \hat{u}_n}{\partial t}(\hat{u}_n-\hat{u}_n^*)d\theta\right| &\leq \left(\int_{\{t_n+s_n'\}\times S^1}|\frac{\partial \hat{u}_n}{\partial t}|^2d\theta\right)^{\frac{1}{2}}\left(\int_{\{t_n+s_n'\}\times S^1}|\hat{u}_n-\hat{u}_n^*|^2d\theta\right)^{\frac{1}{2}}\notag\\
&\leq \left(\int_{\{t_n+s_n'\}\times S^1}|\frac{\partial \hat{u}_n}{\partial t}|^2d\theta\right)^{\frac{1}{2}}\left(\int_{\{t_n+s_n'\}\times S^1}|\frac{\partial \hat{u}_n}{\partial \theta}|^2d\theta\right)^{\frac{1}{2}}\notag\\
&\leq C\left(\int_{\{t_n+s_n'\}\times (0,\pi)}|\frac{\partial u_n}{\partial t}|^2d\theta\right)^{\frac{1}{2}}\left(\int_{\{t_n+s_n'\}\times (0,\pi)}|\frac{\partial u_n}{\partial \theta}|^2d\theta\right)^{\frac{1}{2}}\notag\\
&\leq C\left(\int_{\{t_n+s_n'\}\times (0,\pi)}|\frac{\partial u_n}{\partial \theta}|^2d\theta+|\alpha_n|\right)^{\frac{1}{2}}\left(\int_{\{t_n+s_n'\}\times (0,\pi)}|\frac{\partial u_n}{\partial \theta}|^2d\theta\right)^{\frac{1}{2}}=o(1)|\alpha_n|.
\end{align}

Similarly,
\begin{equation}\label{equ:09}
\left|\int_{\{t_n+s_n'\}\times S^1}\frac{\partial \hat{u}_n}{\partial t}(\hat{u}_n-\hat{u}_n^*)d\theta\right|=o(1)|\alpha_n|.
\end{equation}

Combining \eqref{inequ:01} with \eqref{inequ:06},\eqref{inequ:04}, \eqref{equ:08} and \eqref{equ:09}, we obtain
\begin{align*}
&\int_{t_n-s_n'}^{t_n+s_n'}\int_0^\pi |\frac{\partial u_n}{\partial\theta}|^2d\theta dt \\&\leq C\beta_n s_n |\alpha_n|+ C\|\tau(u_n)\|_{L^2}^2+\left|\int_{\{t_n+s_n'\}\times S^1}\frac{\partial \hat{u}_n}{\partial t}(\hat{u}_n-\hat{u}_n^*)d\theta\right|+\left|\int_{\{t_n-s_n'\}\times S^1}\frac{\partial \hat{u}_n}{\partial t}(\hat{u}_n-\hat{u}_n^*)d\theta \right| \\ &= o(1)|\alpha_n|
\end{align*} which implies \eqref{equ:10}.
\end{proof}

\begin{lem}\label{lem:03}
Under assumptions of Lemma \ref{lem:02}, for any $t_n\in [-T_n+T_n^{\frac{1}{4}},\lambda T_n-T_n^{\frac{1}{4}}]$ and fixed positive constant $k>0$, we have
\begin{align}
\frac{1}{\sqrt{|\alpha_n|}}\big(\hat{u}_n(t,\theta)-\hat{u}_n(t_n,0)\big)   \to \overrightarrow{a}t,\ \ weakly\ \ in\ \ W^{2,2}([t_n-k,t_n+k]\times S^1)
\end{align} where $|\overrightarrow{a}|=\frac{1}{\sqrt{\pi}}$ \footnote{Here, for the case of $P_n$, one can check that $|\overrightarrow{a}|=\frac{1}{\sqrt{2\pi}}$.} and
\begin{align}
\frac{1}{\sqrt{|\alpha_n|}}\left(\hat{u}_n(t,\theta)-\frac{1}{2\pi}\int_0^{2\pi}\hat{u}_n(t,\theta)d\theta\right)   \to 0,\ \ weakly\ \ in\ \ W^{2,2}([t_n-k,t_n+k]\times S^1).
\end{align}

\end{lem}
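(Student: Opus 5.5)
We rescale and pass to the limit in the equation for the extended map. Set
\[
v_n(t,\theta):=\frac{1}{\sqrt{|\alpha_n|}}\big(\hat u_n(t_n+t,\theta)-\hat u_n(t_n,0)\big),\qquad (t,\theta)\in[-k-1,k+1]\times S^1 .
\]

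\textbf{Step 1: uniform bounds.} By \eqref{inequ:06} (from Lemma \ref{lem:04} together with $\mu>0$), we have $\int_{[t_n-k-1,t_n+k+1]\times[0,\pi]}|\nabla u_n|^2\leq C(1+k)|\alpha_n|$. The reflection $\sigma$ has bounded differential near $K$, so $\|\nabla v_n\|_{L^2}\leq C(k)$, and $v_n(0,0)=0$. Dividing \eqref{equ:11} by $\sqrt{|\alpha_n|}$ gives
\[
\Delta v_n=-\sqrt{|\alpha_n|}\,\Upsilon_{\hat u_n}(\nabla v_n,\nabla v_n)+\hat\Gamma/\sqrt{|\alpha_n|}.
\]
The last term tends to $0$ in $L^2$ by \eqref{inequ:04}, since $\|\tau(u_n)\|^2_{L^2}/|\alpha_n|=o(1)$. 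For the quadratic term, Lemma \ref{lem:small-energy-regularity} gives $\|\nabla \hat u_n\|_{W^{1,2}}\leq C(\|\nabla u_n\|_{L^2}+\|\tau(u_n)\|_{L^2})$ locally, hence $\|\nabla v_n\|_{W^{1,2}}\leq C(k)$. By Sobolev embedding $\nabla v_n\in L^4$ uniformly, so $\sqrt{|\alpha_n|}\,|\nabla v_n|^2\to 0$ in $L^2$, because $\alpha_n\to 0$ (as $\sqrt{|\alpha_n|}T_n\to\mu<\infty$, or by the energy bound). Standard $L^2$ estimates on a slightly smaller cylinder then give $\|v_n\|_{W^{2,2}}\leq C(k)$. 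Hence a subsequence converges weakly in $W^{2,2}$ and strongly in $W^{1,2}$ to a harmonic function $v$ on $[-k,k]\times S^1$.

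\textbf{Step 2: identify $v$.} By Lemma \ref{lem:02}, $\|\partial_\theta v_n\|_{L^2}\to0$; the reflected half is controlled by the upper half. So $\partial_\theta v=0$, $v=v(t)$, and harmonicity forces $v''=0$. Since $v(0,0)=0$, we get $v=\vec a\,t+\vec b$ with $\vec b=0$. To find $|\vec a|$, use the Pohozaev constant. For each $t$,
\[
\int_{\{t\}\times[0,\pi]}|\partial_t u_n|^2\,d\theta=\alpha_n+\int_{\{t\}\times[0,\pi]}|\partial_\theta u_n|^2\,d\theta+o(|\alpha_n|),
\]
using \eqref{inequ:04}. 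Integrating in $t$ over $[t_n-k,t_n+k]$, dividing by $|\alpha_n|$, and applying Lemma \ref{lem:02} gives $\int_{[-k,k]\times[0,\pi]}|\partial_t v_n|^2\to 2k\,\mathrm{sgn}$ limit. By strong $W^{1,2}$ convergence the left side tends to $2k\pi|\vec a|^2$, so $\alpha_n>0$ eventually and $|\vec a|^2=1/\pi$. On the full $S^1$ we would get $2\pi$ instead; this is the footnote case for $P_n$. The limit is independent of the subsequence up to the direction of $\vec a$, which is all the statement asserts.

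\textbf{Step 3: second claim.} The function $w_n:=v_n-\frac{1}{2\pi}\int_0^{2\pi}v_n\,d\theta$ is bounded in $W^{2,2}$ and converges to $v-\bar v=0$, since $v$ is independent of $\theta$. This is exactly the second convergence.

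The main obstacle is Step 1: the quadratic nonlinearity must vanish after rescaling. This needs the $W^{2,2}$ bound, uniform in $n$, for the \emph{rescaled} map, so that $\sqrt{|\alpha_n|}\,|\nabla v_n|^2\to0$. We must also check that the piecewise $\Upsilon$ and $\hat\Gamma$ across $\theta=\pi$ cause no trouble. Proposition \ref{prop:01} gives $\hat u_n\in W^{2,2}$ across the seam, so the equation holds in the strong sense and the elliptic estimate applies.
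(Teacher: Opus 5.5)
Your proposal is correct and follows essentially the same route as the paper: rescale by $\sqrt{|\alpha_n|}$, get uniform $W^{2,2}$ bounds from \eqref{inequ:06} and small-energy regularity, pass to a harmonic limit, use Lemma \ref{lem:02} to remove the $\theta$-dependence, and determine $|\overrightarrow{a}|$ from the Pohozaev constant. You are slightly more careful than the paper on two points it uses without comment: that $\alpha_n\to 0$, which is needed to drop the rescaled quadratic term, and that $\alpha_n>0$ eventually, which is needed to write $2\alpha_n/|\alpha_n|=2$.
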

\begin{proof}
We just prove the first conclusion, since the second one is similar.

\

By \eqref{inequ:06}, \eqref{inequ:04} and Lemma \ref{lem:small-energy-regularity}, we have
\begin{align*}
\|\nabla u_n\|_{W^{1,2}([t_n-k,t_n+k]\times [0,\pi])}\leq C(k)\sqrt{|\alpha_n|}.
\end{align*}

Setting $$\hat{v}_n(t,\theta):=\frac{1}{\sqrt{|\alpha_n|}}\big(\hat{u}_n(t_n+t,\theta)-\hat{u}_n(t_n,0)\big), $$
then there holds
 $$\|\nabla \hat{v}_n\|_{W^{1,2}([-k,+k]\times S^1)}\leq C(k).$$
 Noting that $\hat{v}_n(0,0)=0$, then by embedding theory, we have $$\|\hat{v}_n\|_{C^{0}([-k,+k]\times S^1)}\leq C(k).$$

Since $\hat{v}_n$ satisfies following equation $$\Delta \hat{v}_n+\sqrt{|\alpha_n|}\Upsilon(\nabla \hat{v}_n, \nabla \hat{v}_n)-\frac{1}{\sqrt{|\alpha_n|}}\hat{\Gamma}=0,$$ noting that $$\frac{\|\hat{\Gamma}\|_{L^2([-k,k]\times S^1)}}{\sqrt{|\alpha_n|}}\leq C\|\tau(u_n)\|_{L^2}\cdot T_n=o(1),$$ passing to a subsequence, we know that $\hat{v}_n\rightharpoonup v$ weakly in $W^{2,2}_{loc}(\R\times S^1)$ where $v$ satisfies $$\Delta v=0.$$

By Lemma \ref{lem:02}, we get $$\frac{\partial v}{\partial\theta}=0.$$
Therefore, $v$ must be of the following form
$$v=\overrightarrow{a}t$$
for some constant vector $\overrightarrow{a}\in T_yN$.

On one hand, by compact embedding theory, we have $$\lim_{n\to\infty}\int_{-1}^{1}\int_0^\pi|\nabla \hat{v}_n|^2d\theta dt= \int_{-1}^{1}\int_0^\pi|\nabla v|^2d\theta dt=2\pi |\overrightarrow{a}|^2.$$
On the other hand, Lemma \ref{lem:02} tells us
\begin{align*}
\lim_{n\to\infty}\frac{1}{|\alpha_n|}\int_{[t_n-k,t_n+k]\times [0,\pi]}\left|\frac{\partial u_n}{\partial\theta}\right|^2dtd\theta=0,
\end{align*} it yields that
\begin{align*}
\lim_{n\to\infty}\int_{-1}^{1}\int_0^\pi|\nabla \hat{v}_n|^2d\theta dt&= \lim_{n\to\infty}\frac{1}{|\alpha_n|}\int_{[t_n-1,t_n+1]\times [0,\pi]}\left(\left|\frac{\partial u_n}{\partial t}\right|^2+\left|\frac{\partial u_n}{\partial\theta}\right|^2\right)dtd\theta\\ &= \lim_{n\to\infty}\frac{1}{|\alpha_n|}\int_{[t_n-1,t_n+1]\times [0,\pi]}\left(\left|\frac{\partial u_n}{\partial t}\right|^2-\left|\frac{\partial u_n}{\partial\theta}\right|^2\right)dtd\theta\\ &=
\frac{2\alpha_n}{|\alpha_n|}+\frac{1}{|\alpha_n|}\int_{t_n-1}^{t_n+1}\int_{[0,s]\times [0,\pi]}\left(2\tau(u_n)\cdot \frac{\partial u_n}{\partial t}\right)dtd\theta ds=2,
\end{align*} where the last equality follows from \eqref{inequ:04}.

Then, we can see that $$|\overrightarrow{a}|=\frac{1}{\sqrt{\pi}}.$$
\end{proof}

Now we begin to prove Theorem \ref{thm:02}.

\begin{proof}[\textbf{Proof of Theorem \ref{thm:02}:}] We first consider the case that $0< \mu<\infty$.

\

Denote the curve $$\gamma_n: \ \ u_n^*(t)=\frac{1}{2\pi}\int_0^{2\pi}\hat{u}_n(t,\theta)d\theta,\ \ t\in [- T_n+T_n^{\frac{1}{4}}, T_n-T_n^{\frac{1}{4}}].$$

Denoting
\[
\dot{u}_n^*=\frac{du_n^*}{dt},\  \ddot{u}_n^*=\frac{d^2u_n^*}{dt^2},
\]
by a direct computation, we have
\begin{align*}
\ddot{u}_n^*(t)&=\frac{1}{2\pi}\int_0^{2\pi}\frac{\partial^2\hat{u}_n}{\partial t^2}(t,\theta)d\theta=\frac{1}{2\pi}\int_0^{2\pi}\Delta \hat{u}_nd\theta\\
&=-\frac{1}{2\pi}\int_0^{2\pi}A(\hat{u}_n)(\nabla \hat{u}_n,\nabla\hat{u}_n)d\theta+\frac{1}{2\pi}\int_\pi^{2\pi}D^2\sigma|_{\sigma(\hat{u}_n)}(D\sigma|_{\hat{u}_n}\cdot D\hat{u}_n, D\sigma|_{\hat{u}_n}\cdot D\hat{u}_n) d\theta\\
&\quad+\frac{1}{2\pi}\int_0^{2\pi}\hat\Gamma d\theta.
\end{align*}

Let $s$ be the arc length parameter of $\gamma_n$, i.e. $$s(t)=\int_{0}^t|\dot u_n^*(\xi)|d\xi.$$

Without loss of generality, we assume $2( T_n-T_n^{\frac{1}{4}})=k_n$ is an integer. By Lemma \ref{lem:03}, we have
\begin{align*}
\int_{- T_n+T_n^{\frac{1}{4}}}^{\lambda T_n-T_n^{\frac{1}{4}}}|\dot u_n^*(t)|dt&=\sum_{i=0}^{k_n-1}\int_{- T_n+T_n^{\frac{1}{4}}+i}^{- T_n+T_n^{\frac{1}{4}}+i+1}|\dot u_n^*(t)|dt\\
&=k_n \sqrt{|\alpha_n|}(\frac{1}{\sqrt{\pi}}+o(1))\\
&=\sqrt{|\alpha_n|}\ 2( T_n-T_n^{\frac{1}{4}})(\frac{1}{\sqrt{\pi}}+o(1))\to \frac{2 \mu}{\pi}
\end{align*} as $n\to\infty$.

Thus for any $s\in (0,\frac{2 \mu}{\pi})$,  there holds that $$u_n^*(s)|_{[-s,s]}\subset u_n^*(t)|_{[ T_n-T_n^{\frac{1}{4}},T_n-T_n^{\frac{1}{4}}]}$$ when $n$ is big enough.

By Lemma \ref{lem:small-energy-regularity-1}, it is easy to see that $\gamma_n:\ u_n^*(s)$ converges in $C^0([0,s_1])$ to a curve on $N$ denoted by $\gamma: u^*(s) $. Next, we will show that $\gamma$ is a geodesic-like curve.

Computing directly, we obtain  the following equations  in the sense of  almost everywhere, i.e.
\begin{align}
\frac{d^2u_n^*}{ds^2}
&=\frac{1}{|\dot{u}_n^*|^2}(\ddot{u}_n^*-\frac{\langle \ddot{u}_n^*,\dot{u}_n^*\rangle}{|\dot{u}_n^*|^2}\dot{u}_n^*)\notag\\
&=\frac{1}{|\dot{u}_n^*|^2}\frac{1}{2\pi}\left(-\int_0^{2\pi}A(\hat{u}_n)(\nabla \hat{u}_n,\nabla\hat{u}_n) d\theta+\int_\pi^{2\pi}D^2\sigma|_{\sigma(\hat{u}_n)}(D\sigma|_{\hat{u}_n}\cdot D\hat{u}_n, D\sigma|_{\hat{u}_n}\cdot D\hat{u}_n) d\theta\right)\notag \\
&\quad -\frac{\dot{u}_n^*}{|\dot{u}_n^*|^4}\frac{1}{2\pi}\bigg(-\int_0^{2\pi}\langle A(\hat{u}_n)(\nabla \hat{u}_n,\nabla\hat{u}_n),\dot{u}_n^* \rangle d\theta\notag\\& \quad +\int_\pi^{2\pi}\langle D^2\sigma|_{\sigma(\hat{u}_n)}(D\sigma|_{\hat{u}_n}\cdot D\hat{u}_n, D\sigma|_{\hat{u}_n}\cdot D\hat{u}_n), \dot{u}_n^*\rangle d\theta\bigg)\notag\\
&\quad +\frac{1}{|\dot{u}_n^*|^2}\frac{1}{2\pi}\int_0^{2\pi}\hat\Gamma d\theta -\frac{\dot{u}_n^*}{|\dot{u}_n^*|^4}\frac{1}{2\pi}\int_0^{2\pi}\langle \hat\Gamma, \dot{u}_n^* \rangle d\theta\notag \\
&=\mathbf{I}+\mathbf{II}+\mathbf{III}+\mathbf{IV}+\mathbf{V}+\mathbf{VI}.
\end{align}
By Lemma  \ref{lem:03}, we know that $$\frac{\dot{u}_n^*(t)}{\sqrt{|\alpha_n|}}\rightharpoonup \overrightarrow{a},\ \ weakly\  \ in\ \ W^{1,2}([t_n-k,t_n+k])$$ for any positive constant $k>0$, any $t_n\in [- T_n+T_n^{\frac{1}{4}}, T_n-T_n^{\frac{1}{4}}]$, where $|\overrightarrow{a}|=\frac{1}{\sqrt{\pi}}$. Embedding theory implies that $$\frac{\dot{u}_n^*(t)}{\sqrt{|\alpha_n|}}\rightarrow \overrightarrow{a},\ \ strongly\  \ in\ \ C^{0}([- T_n+T_n^{\frac{1}{4}}, T_n-T_n^{\frac{1}{4}}]).$$
By \eqref{inequ:06}, we have
\begin{align*}
\int_0^{s_1}|\mathbf{I}|ds&=\int_0^{s_1}\left|  \frac{1}{|\dot{u}_n^*|^2}\frac{1}{2\pi}\int_0^{2\pi}A(\hat{u}_n)(\nabla \hat{u}_n,\nabla\hat{u}_n) d\theta  \right|ds\\
&\leq \int_{- T_n+T_n^{\frac{1}{4}}}^{ T_n-T_n^{\frac{1}{4}}}\left|  \frac{1}{|\dot{u}_n^*|^2}\frac{1}{2\pi}\int_0^{2\pi}A(\hat{u}_n)(\nabla \hat{u}_n,\nabla\hat{u}_n) d\theta  \right||\dot{u}_n^*|dt\\
&\leq \frac{C}{\sqrt{|\alpha_n|}}\int_{- T_n+T_n^{\frac{1}{4}}}^{ T_n-T_n^{\frac{1}{4}}}\int_0^{\pi}|\nabla \hat{u}_n|^2 d\theta  dt\leq C T_n\sqrt{|\alpha_n|}\leq C
\end{align*} and

\begin{align}\label{inequ:14}
\int_0^{s_1}|\mathbf{V}|ds&=\int_0^{s_1}\left|  \frac{1}{|\dot{u}_n^*|^2}\frac{1}{2\pi}\int_0^{2\pi}\hat{\Gamma} d\theta  \right|ds\notag\\
&\leq \frac{C\sqrt{s_1}}{|\alpha_n|}\left(\int_{0}^{ s_1}\int_0^{\pi}|\tau(u_n)|^2 d\theta  ds\right)^{\frac{1}{2}}\notag\\
&\leq \frac{C}{\sqrt{|\alpha_n|}}\|\tau(u_n)\|_{L^2(Q_n)}\leq C\|\tau(u_n)\|_{L^2(Q_n)}\cdot T_n=o(1).
\end{align}
Similarly, one can show that $$\int_0^{s_1}(|\mathbf{II}|+|\mathbf{III}|+|\mathbf{IV}|)ds\leq C$$ and
\begin{equation}\label{inequ:15}
\int_0^{s_1}|\mathbf{VI}|ds=o(1).
\end{equation}

Then we get that $$\|u_n^*(s)\|_{W^{2,1}([0,s_1])}\leq C.$$ Passing to a subsequence, we assume that $$u_n^*(s)\rightharpoonup u^*(s),\ \ \ weakly\ \ in\ \ W^{2,1}([0,s_1]).$$

\

Next, we show the equation for $u^*(s)$.

\

 Noting that, for any $\xi\in [0,s_1]$, there holds
\begin{align*}
&\int_0^{\xi} \frac{1}{|\dot{u}_n^*|^2}\frac{1}{2\pi}\int_0^{2\pi}A(\hat{u}_n)(\nabla \hat{u}_n,\nabla\hat{u}_n) d\theta  ds\\
&= \int_0^{\xi} \frac{1}{|\dot{u}_n^*|^2}\frac{1}{2\pi}\int_0^{2\pi}(A(\hat{u}_n)-A(u_n^*))(\nabla \hat{u}_n,\nabla\hat{u}_n) d\theta  ds+ \int_0^{\xi} \frac{1}{|\dot{u}_n^*|^2}\frac{1}{2\pi}\int_0^{2\pi}A(u_n^*)(\nabla (\hat{u}_n-u_n^*),\nabla\hat{u}_n) d\theta  ds\\
&+\int_0^{\xi} \frac{1}{|\dot{u}_n^*|^2}\frac{1}{2\pi}\int_0^{2\pi}A(u_n^*)(\nabla u_n^*,\nabla(\hat{u}_n-u_n^*)) d\theta  ds
+ \int_0^{\xi} \frac{1}{|\dot{u}_n^*|^2}A(u_n^*)(\nabla u_n^*,\nabla u^*_n)  ds\\&=\mathbf{I_1}+\mathbf{I_2}+\mathbf{I_3}+\mathbf{I_4}.
\end{align*} Here, we in fact used a kind of extension of second fundamental form $A$. Roughly speaking, since $A$ is a smooth function defined on compact Riemannian manifold $N$, then we extend its definition to $N_{\delta}$, a small neighborhood of $N$ with uniform $C^1$-norm. Next, we will do a similar argument to $D\sigma$ and $D^2\sigma$ if needed.

By \eqref{inequ:06}, Lemma \ref{lem:small-energy-regularity}, Lemma \ref{lem:03} and Hölder's inequality, we get
\begin{align*}
\left|\mathbf{I_1}\right|
&\leq \frac{C}{\sqrt{|\alpha_n|}}\int_{- T_n+T_n^{\frac{1}{4}}}^{ T_n-T_n^{\frac{1}{4}}}\int_0^{\pi}|\nabla \hat{u}_n|^2 |\hat{u}_n-u_n^*|d\theta  dt
\\
&\leq \frac{C}{\sqrt{|\alpha_n|}}\sum_{i=0}^{k_n}\|\nabla u_n\|^2_{L^4([- T_n+T_n^{\frac{1}{4}}+i,- T_n+T_n^{\frac{1}{4}}+i+1]\times (0,\pi))} \|\hat{u}_n-u_n^*\|_{L^2([- T_n+T_n^{\frac{1}{4}}+i,- T_n+T_n^{\frac{1}{4}}+i+1]\times (0,\pi))}\\
&\leq Ck_n|\alpha_n| \leq C\sqrt{|\alpha_n|}=o(1)
\end{align*} and
\begin{align*}
\left|\mathbf{I_2}\right|
&\leq \frac{C}{\sqrt{|\alpha_n|}}\int_{- T_n+T_n^{\frac{1}{4}}}^{ T_n-T_n^{\frac{1}{4}}}\int_0^{\pi}|\nabla \hat{u}_n| |\nabla(\hat{u}_n-u_n^*)|d\theta  dt
\\
&\leq \frac{C}{\sqrt{|\alpha_n|}}\sum_{i=0}^{k_n}\|\nabla u_n\|_{L^2([- T_n+T_n^{\frac{1}{4}}+i,- T_n+T_n^{\frac{1}{4}}+i+1]\times (0,\pi))} \|\nabla(\hat{u}_n-u_n^*)\|_{L^2([- T_n+T_n^{\frac{1}{4}}+i,- T_n+T_n^{\frac{1}{4}}+i+1]\times (0,\pi))}\\
&\leq Ck_n\sqrt{|\alpha_n|}o(1)=o(1).
\end{align*}
Similarly, we have $\mathbf{I_3}=o(1)$.

For $\mathbf{I_4}$, by dominated convergence theory, it is easy to see that
\begin{align*}
\mathbf{I_4}=\int_0^{\xi} A(u_n^*)(\frac{d}{ds} u_n^*,\frac{d}{ds} u^*_n) ds= \int_0^{\xi} A(u^*)(\frac{d}{ds} u^*,\frac{d}{ds} u^*) ds+o(1),
\end{align*} which implies
\begin{equation}
\int_0^{\xi}\mathbf{I}ds=-\int_0^{\xi} A(u^*)(\frac{d}{ds} u^*,\frac{d}{ds} u^*) ds+o(1).
\end{equation}

Similarly, we can prove
\begin{equation}
\int_0^{\xi}\mathbf{II}ds=\frac{1}{2}\int_0^{\xi} D^2\sigma(u^*)(\frac{d}{ds} u^*,\frac{d}{ds} u^*) ds+o(1)
\end{equation}
and
\begin{align*}
\int_0^\xi |\mathbf{III}| ds&=\int_0^\xi\left|\frac{\dot{u}_n^*}{|\dot{u}_n^*|^4}\frac{1}{2\pi}\int_0^{2\pi}\langle A(\hat{u}_n)(\nabla \hat{u}_n,\nabla\hat{u}_n),\dot{u}_n^* \rangle d\theta \right| ds\\
&=\int_0^\xi\left|\frac{\dot{u}_n^*}{|\dot{u}_n^*|^4}\frac{1}{2\pi}\int_0^{2\pi}\langle A(u^*_n)(\nabla u^*_n,\nabla u^*_n),\dot{u}_n^* \rangle d\theta \right| ds+o(1)\\
&=\int_0^\xi\left|\langle A(u^*_n)(\frac{d}{ds} u^*_n, \frac{d}{ds} u^*_n),\frac{d}{ds}u_n^* \rangle  \right| ds+o(1)\\
&=\int_0^\xi\left|\langle A(u^*)(\frac{d}{ds} u^*, \frac{d}{ds} u^*),\frac{d}{ds}u^* \rangle  \right| ds+o(1)=o(1)
\end{align*} and
\begin{align*}
\int_0^\xi |\mathbf{IV}| ds=\frac{1}{2}\int_0^\xi\left|\langle D^2\sigma(u^*)(\frac{d}{ds} u^*,\frac{d}{ds} u^*),\frac{d}{ds}u^* \rangle d\theta \right| ds+o(1)=o(1)
\end{align*}
where we used the fact that $$\langle A(u^*)(\frac{d}{ds} u^*, \frac{d}{ds} u^*),\frac{d}{ds}u^* \rangle=\langle D^2\sigma|_{u^*}( \frac{d}{ds}u^*,  \frac{d}{ds}u^*), \frac{d}{ds}u^*\rangle=0.$$

For $\mathbf{V}$ and $\mathbf{VI}$, by \eqref{inequ:14} and \eqref{inequ:15}, we have
\begin{align*}
\int_0^\xi |\mathbf{V}| ds+\int_0^\xi |\mathbf{VI}| ds=o(1).
\end{align*}

Combining these together, we obtain that
\begin{align*}
\frac{d}{ds}u^*(\xi)-\frac{d}{ds}u^*(0)=-\int_0^{\xi} A(u^*)(\frac{d}{ds} u^*,\frac{d}{ds} u^*) ds+ \frac{1}{2}\int_0^{\xi} D^2\sigma(u^*)(\frac{d}{ds} u^*,\frac{d}{ds} u^*) ds,
\end{align*} which implies that $$\frac{d^2}{ds^2}u^*(s)=-A(u^*)(\frac{d}{ds} u^*,\frac{d}{ds} u^*) +\frac{1}{2}D^2\sigma(u^*)(\frac{d}{ds} u^*,\frac{d}{ds} u^*).$$

\

Finally, we compute the length of this geodesic-like curve.

\

Noting that $$\lim_{n\to\infty} \left(|\alpha_n|+\|\tau(u_n)\cdot \partial_tu_n\|_{L^1(Q_n)}+ \|\tau(u_n)\|^2_{L^2(Q_n)}\right)^{\frac{1}{2}}\cdot T_n^{\frac{1}{4}}=0,$$ by the proof of Proposition \ref{prop:03}, we have
\begin{align*}
osc_{[- T_n+1, -T_n+T_n^{\frac{1}{4}}]\times [0,\pi]}u_n+osc_{[ T_n-T_n^{\frac{1}{4}},T_n-1]\times [0,\pi]}u_n=o(1).
\end{align*}
Combining this with Lemma \ref{lem:03}, the length of the limit geodesic-like curve $\gamma$ is
\begin{align*}
L(\gamma)&=\lim_{n\to\infty}\int_{-T_n+T_n^{\frac{1}{4}}}^{T_n-T_n^{\frac{1}{4}}}|\dot{u}_n^*|dt\\
&=\lim_{n\to\infty}\sum_{i=1}^{k_n}\int_{-T_n+T_n^{\frac{1}{4}}+(i-1)}^{-T_n+T_n^{\frac{1}{4}}+i}|\dot{u}_n^*|dt\\
&=\lim_{n\to\infty}k_n\sqrt{|\alpha_n|}\left(\frac{1}{\sqrt{\pi}}+o(1)\right)\\
&=\lim_{n\to\infty}2 T_n\sqrt{|\alpha_n|}\left(\frac{1}{\sqrt{\pi}}+o(1)\right)=\frac{2}{\sqrt{\pi}}\mu.
\end{align*}

Lastly, if $\mu=+\infty$, from the above argument, it is easy to see that the neck contains at least an infinite length geodesic-like curve $\gamma$.
\end{proof}

\

At the end of this section, we give the proof of Theorem \ref{thm-main:01} and Theorem \ref{thm-main:03}.
\begin{proof}[\textbf{Proof of Theorem \ref{thm-main:01} and Theorem \ref{thm-main:03}.}]
We just prove conclusion $(2)$ of Theorem \ref{thm-main:03}, since the proof of Theorem \ref{thm-main:01} and conclusion $(1)$ of Theorem \ref{thm-main:03} is similar and in fact easier. In this  case,  cutting $(S^1\times [0,\pi],g_n)$ along the geodesic $\gamma_n$ and then we can see it as  $Q_n:=([-\frac{1}{2}\rho_n^{-1},\frac{1}{2}\rho_n^{-1}]\times \rho_n\frac{1}{\pi}[0,\pi],ds^2+d\theta^2)$. For a sequence of maps $u_n:Q_n\to (N,h)$ with tension fields $\tau(u_n)$ and with free boundary condition $u_n|_{\partial Q_n}\subset K$, where the boundary $\partial Q_n=[-\frac{1}{2}\rho_n^{-1},\frac{1}{2}\rho_n^{-1}]\times \{0,\pi\}$, we define $\tilde{Q}_n=[-\frac{1}{2}\pi\rho_n^{-2},\frac{1}{2}\pi\rho_n^{-2}]\times [0,\pi] $ and $v_n(x)=u_n(\frac{1}{\pi}\rho_nx)$, $x\in \tilde{Q}_n$. Then it is easy to see that $v_n(x)$ is a map from $\tilde{Q}_n$ to $N$ with free boundary $v_n(x)|_{\partial \tilde{Q}_n}\subset K$ and with tension field $\tau(v_n)(x)=(\frac{\rho_n}{\pi})^2\tau(u_n)(\frac{1}{\pi}\rho_nx)$. Moreover, we have
\begin{align*}
E(v_n(x),\tilde{Q}_n)=E(u_n(x),Q_n),\ \ \left| \int_{\tilde{Q}_n}\phi(v_n)dx  \right|=\left| \int_{Q_n}\phi(u_n)dx  \right|,\\
\|\tau(v_n)\cdot \frac{\partial v_n}{\partial t}\|_{L^1(\tilde{Q}_n)}=\frac{\rho_n}{\pi}\|\tau(u_n)\cdot \frac{\partial u_n}{\partial t}\|_{L^1(Q_n)},\ \ \|\tau(v_n)\|_{L^2(\tilde{Q}_n)}=\frac{\rho_n}{\pi}\|\tau(u_n)\|_{L^2(Q_n)}.
\end{align*}

Since $v_n(x)$ may have blow-up points on $\tilde{Q}_n$,  by  a standard decomposition argument as in \cite{Zhu-3}, we can divide $\tilde{Q}_n$ as  $$\tilde{Q}_n=I_n^0\cup J_n^1\cup I_n^1\cup J_n^2....\cup J_n^L\cup I_n^L,$$
where $L$ is a uniformly bounded positive integer which is independent of $n$. Moreover, there is no blow-up point on each $I_n^i,\ i=0,1,...,L,$ i.e. \eqref{equ:03} holds. Using Theorem \ref{thm:01} and Theorem \ref{thm:02} on each $I_n^i,\ i=0,1,...,L,$ and classical blow-up theory \cite{DingWeiyueandTiangang,jost-Liu-Zhu} on each $J_n^i,\ i=1,...,L$, we can obtain the conclusion $(2)$ of Theorem \ref{thm-main:03}.
\end{proof}

\

\begin{proof}[\textbf{Proof of Theorem \ref{thm-main:02}}]
Firstly, it is easy to see that conclusion $(1)$ is a direct consequence of Theorem \ref{thm-main:01}. Secondly, if there is no bubble at infinity, then by \cite{Ding-Li-Liu}, we can find a time sequence $t_n\to\infty$ such that  $$ \|\tau(u(x,t_n),g(t_n))\|_{L^2(M)}=o(1)\rho(t_n)^2,\ \ E(u(x,t_n);g(t_n))\leq C\rho(t_n)^2,$$ which immediately implies the condition  \eqref{equ:17}. Then conclusion $(2)$ also follows from Theorem \ref{thm-main:01}.
\end{proof}

\

\section{Evolution of minimal cylinders with free boundary}\label{sec:flow}

\

In this section, we shall extend Theorem \ref{thm:03} and Theorem \ref{thm-main:02} to the free boundary case. We will study the convergence of the flow \eqref{equ:15}-\eqref{equ:16} and prove Theorem \ref{thm-main:04}.

\

Firstly, we derive some basic properties of the flow. Let $M=S^1\times [0,\pi]$.

\begin{lem}\label{lem:01}
Suppose $(u,g)$ is a smooth solution of the flow \eqref{equ:15}-\eqref{equ:16} in $M\times [0,T)$. Then for any $t\in [0,T)$, there holds
\begin{align*}
\frac{d}{dt}\frac{1}{2}\int|\nabla_gu|^2dM=&-\int_M|\tau_g(u)|^2dM-\frac{1}{4}\left|\int_M\phi(u)dM\right|^2,
\end{align*} where $$\phi(u):=\left|\frac{\partial u}{\partial \theta^1}\right|^2-\left|\frac{\partial u}{\partial \theta^2}\right|^2-2\sqrt{-1}\frac{\partial u}{\partial \theta^1}\cdot \frac{\partial u}{\partial \theta^2}$$ and $\phi(u)dz^2$ is the Hopf differential of $u$, $dz=d\theta^1+\sqrt{-1}d\theta^2$. Here, $(\theta^1,\theta^2)$ is a new coordinate system defined by
\begin{align*}
\left(
  \begin{array}{cc}
    \theta^1 \\ \theta^2
  \end{array}
\right)= \frac{1}{\sqrt{b}}\left(
                             \begin{array}{cc}
                               1 & a \\
                               0 & b \\
                             \end{array}
                           \right)
                           \cdot
                           \left(
  \begin{array}{cc}
    x^1 \\ x^2
  \end{array}
\right).
\end{align*}
\end{lem}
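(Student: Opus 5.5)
The plan is to compute the time derivative of the energy along the flow \eqref{equ:15} by splitting it into the part coming from the evolution of $u$ and the part coming from the evolution of the metric parameters $(a,b)$. Write $E(u,g)=E(u,a,b)$ with
\[
E(u,a,b)=\frac{1}{2b}\int_M\Big[(a^2+b^2)|\partial_{x^1}u|^2+|\partial_{x^2}u|^2-2a\,\partial_{x^1}u\cdot\partial_{x^2}u\Big]dx^1dx^2 .
\]
Then
\[
\frac{dE}{dt}=\int_M \frac{\partial E}{\partial u}\cdot \partial_t u+\frac{\partial E}{\partial a}\frac{da}{dt}+\frac{\partial E}{\partial b}\frac{db}{dt},
\]
and the lemma amounts to evaluating these three terms.

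\textbf{Map part.} Integrating by parts in $x$ gives
\[
\frac{d}{d\epsilon}E(u+\epsilon\partial_tu)=-\int_M\Delta_g u\cdot\partial_t u\,dM+\int_{\partial M}\frac{\partial u}{\partial\vec n}\cdot\partial_tu .
\]
The boundary term vanishes. Indeed, on $\partial M$ we have $u(x,t)\in K$, so $\partial_tu\in T_uK$, while the free boundary condition gives $\partial u/\partial\vec n\perp T_uK$; this is the only place the free boundary enters, exactly as in Lemma \ref{lem:pohozaev-constant}. Since $\partial_tu\in T_uN$ and $\tau=\Delta_gu+A(u)(\nabla u,\nabla u)$ is the tangential part of $\Delta_g u$, this term equals $-\int_M\tau\cdot\partial_tu=-\int_M|\tau_g(u)|^2dM$.

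\textbf{Metric part.} We have
\[
\frac{\partial E}{\partial a}=\frac1b\int_M\big[a|u_{x^1}|^2-u_{x^1}\cdot u_{x^2}\big],
\qquad
\frac{\partial E}{\partial b}=\frac{1}{2b^2}\int_M\big[(b^2-a^2)|u_{x^1}|^2-|u_{x^2}|^2+2a\,u_{x^1}\cdot u_{x^2}\big].
\]
Let $X$ and $Y$ denote the integrals appearing in $\frac{da}{dt}=-bX$ and $\frac{db}{dt}=-\frac12Y$, with $dM=dx^1dx^2$ as in the flow. Then the metric contribution is
\[
-X^2-\frac{1}{4b^2}Y^2 .
\]

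\textbf{Identification with the Hopf differential.} It remains to express this in the coordinates $(\theta^1,\theta^2)$. From the coordinate change, $\partial_{x^1}=b^{-1/2}\partial_{\theta^1}$ and $\partial_{x^2}=b^{-1/2}(a\partial_{\theta^1}+b\partial_{\theta^2})$, while the Jacobian gives $dx^1dx^2=d\theta^1d\theta^2$ (the determinant is $1$). Substituting, I expect
\[
Y/b=\int\big(|u_{\theta^1}|^2-|u_{\theta^2}|^2\big)=\mathrm{Re}\int\phi,
\qquad
X=-\int u_{\theta^1}\cdot u_{\theta^2}=\tfrac12\,\mathrm{Im}\int\phi,
\]
up to checking these normalizations. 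Then
\[
X^2+\frac{Y^2}{4b^2}=\frac14\Big|\int\phi\Big|^2 ,
\]
which combined with the map part gives the stated identity.

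The main obstacle is this last algebraic step: verifying the exact constants so that the two squares combine into $\frac14|\int\phi|^2$, and interpreting $dM$ consistently between the flow equations and the new coordinates. For example, $|u_{x^1}|^2=b^{-1}|u_{\theta^1}|^2$, and the cross terms must cancel. I would carry out the substitution explicitly term by term.
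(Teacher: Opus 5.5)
Your proposal is correct and is essentially the paper's argument: the variation in $u$ gives $-\int_M|\tau_g(u)|^2dM$ once the free boundary condition kills the boundary term, and the variation in $(a,b)$ gives $-X^2-\frac{1}{4b^2}Y^2=-\frac14\left|\int_M\phi(u)dM\right|^2$. The normalizations you flagged do check out: $\det g_{a,b}=1$, so $dM=dx^1dx^2=d\theta^1d\theta^2$; moreover $X=-\int_M u_{\theta^1}\cdot u_{\theta^2}$ and $Y=b\int_M(|u_{\theta^1}|^2-|u_{\theta^2}|^2)$. This is exactly the computation the paper imports from Ding--Li--Liu rather than writing out.
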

\begin{proof}
For ant $t\in [0,T)$, since the volume element $dM$ is independent of time $t$, we have
\begin{align*}
\frac{d}{dt}\frac{1}{2}\int|\nabla_gu|^2dM=\int_M \left( g^{ij}\langle \frac{d}{dt}\frac{\partial u}{\partial x^i},  \frac{\partial u}{\partial x^j}\rangle +\frac{1}{2} \frac{d}{dt}g^{ij}\langle\frac{\partial u}{\partial x^i},  \frac{\partial u}{\partial x^j}\rangle\right) dM.
\end{align*}

A direct computation yields
\begin{align*}
\int_M \left( g^{ij}\langle \frac{d}{dt}\frac{\partial u}{\partial x^i},  \frac{\partial u}{\partial x^j}\rangle\right) dM&= -\int_M\langle \Delta_gu,\frac{\partial u}{\partial t} \rangle dM + \int_{\partial M}\langle\frac{\partial u}{\partial \overrightarrow{n}},\frac{\partial u}{\partial t}\rangle \\ &=-\int_M |\tau_g(u)|^2 dM
\end{align*} where we used the free boundary condition that $\langle\frac{\partial u}{\partial \overrightarrow{n}},\frac{\partial u}{\partial t}\rangle|_{\partial M}=0$.

By \cite{Ding-Li-Liu}, we get
\begin{align*}
&\int_M \left(  \frac{d}{dt}g^{ij}\langle\frac{\partial u}{\partial x^i},  \frac{\partial u}{\partial x^j}\rangle\right)  dM\\&=\frac{2\dot{a}}{b}\int_M\left(a\left|\frac{\partial u}{\partial x^1}\right|^2-\frac{\partial u}{\partial x^1}\cdot \frac{\partial u}{\partial x^2}\right)dM+\frac{\dot{b}}{b^2}\int_M\left((b^2-a^2)\left|\frac{\partial u}{\partial x^1}\right|^2+2a\frac{\partial u}{\partial x^1}\cdot \frac{\partial u}{\partial x^2}-\left|\frac{\partial u}{\partial x^2}\right|^2\right)dM\\
&=-2\left|\int_M\left(\frac{\partial u}{\partial \theta^1}\cdot \frac{\partial u}{\partial \theta^2}\right)dM \right|^2 -\frac{1}{2}\left|\int_M \left(\left|\frac{\partial u}{\partial \theta^1}\right|^2-\left|\frac{\partial u}{\partial \theta^2}\right|^2\right) dM \right|^2=-\frac{1}{2}\left|\int_M\phi(u)dM\right|^2.
\end{align*} Then the conclusion of the lemma follows.
\end{proof}

\

With the help of Lemma \ref{lem:01}, by \cite{Ding-Li-Liu}, we can also show that the metric $g$ along the flow \eqref{equ:15}-\eqref{equ:16} will not go to the boundary of the Teichm\"{u}ller space at any finite time.

\begin{cor}
Suppose $(u,g)$ is a smooth solution of the flow \eqref{equ:15}-\eqref{equ:16} in $M\times [0,T)$. Then for any $t\in [0,T)$, there holds $$C^{-1}g_0\leq g(t)\leq Cg_0,$$ where $C>0$ depends only on $g_0,\ E_0=E(u_0,g_0)$ and $T$.
\end{cor}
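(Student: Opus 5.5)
The plan is to follow the argument of Ding--Li--Liu for the closed torus. The key point is that Lemma \ref{lem:01} makes the energy non-increasing along the flow, and this monotonicity controls the speed of the parameters $(a,b)$.

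\textbf{Energy bound.} By Lemma \ref{lem:01}, $E(u(t),g(t))\le E_0$ for all $t\in[0,T)$. Integrating the identity in time gives
\[
\int_0^T\Big(\|\tau\|_{L^2}^2+\tfrac14\big|\textstyle\int_M\phi(u)\,dM\big|^2\Big)dt\le E_0 .
\]

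\textbf{Bounding $\dot a$ and $\dot b$.} The right-hand sides of the $a$- and $b$-equations in \eqref{equ:15} are exactly the quantities that appear, after the change of variables $(x^1,x^2)\mapsto(\theta^1,\theta^2)$, as the components of $\int_M\phi(u)\,dM$ in the computation of Lemma \ref{lem:01}. The intended estimate is
\[
|\dot a|\le C\,b\,\Big|\int_M\phi\Big|,\qquad |\dot b|\le C\,b\,\Big|\int_M\phi\Big|.
\]
Since $|\phi|\le|\nabla_g u|^2$ pointwise and $dM$ has unit area, this also gives $|\int_M\phi|\le 2E_0$. I will check the precise powers of $b$ by computing $\frac{\partial}{\partial a}E(u,g_{a,b})$ and $\frac{\partial}{\partial b}E(u,g_{a,b})$ directly, as in \cite{Ding-Li-Liu}; the flow is, up to these factors, the gradient flow in the hyperbolic metric $(da^2+db^2)/b^2$ on $H$.

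\textbf{Bounding $a$ and $b$.} The estimate on $\dot b$ gives $|\tfrac{d}{dt}\log b|\le C E_0$. Integrating, $b(t)\in[b_0e^{-CE_0T},\,b_0e^{CE_0T}]$. Then $|\dot a|\le Cb\,E_0\le C(b_0,E_0,T)$, so $|a(t)-a_0|\le C(b_0,E_0,T)$. One could instead use Cauchy--Schwarz with $\int_0^T|\int\phi|^2\,dt\le 4E_0$ to get $\sqrt T$ dependence, but the crude bound suffices.

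\textbf{Comparing metrics.} With $(a,b)$ confined to a compact subset of $\mathbb{R}\times\mathbb{R}^+$ determined by $a_0,b_0,E_0,T$, the matrices
\[
g_{a,b}=\frac1b\begin{pmatrix}1&a\\ a&a^2+b^2\end{pmatrix}
\]
have eigenvalues bounded above and below by constants of the same dependence. This yields $C^{-1}g_0\le g(t)\le Cg_0$.

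\textbf{Main obstacle.} The step needing care is the exact algebraic identification in the second step, namely that the $(a,b)$ velocities are controlled by $b\,|\int\phi|$ with the right weights. This is the computation of \cite{Ding-Li-Liu}. The free boundary condition plays no role in it, since the $(a,b)$ equations involve only interior integrals. The free boundary enters only through the energy monotonicity, which Lemma \ref{lem:01} already supplies.
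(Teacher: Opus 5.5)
Your proposal is correct and takes essentially the paper's route: the paper just combines the energy monotonicity of Lemma \ref{lem:01} with the Ding--Li--Liu argument. The identification you flagged does hold exactly: in the $\theta$-coordinates one has $\dot a=b\int_M\partial_{\theta^1}u\cdot\partial_{\theta^2}u\,dM$ and $\dot b=-\frac{b}{2}\int_M\big(|\partial_{\theta^1}u|^2-|\partial_{\theta^2}u|^2\big)dM$, so $|\dot a|,|\dot b|\le\frac b2\big|\int_M\phi(u)\,dM\big|\le bE\le bE_0$, which is the bound \eqref{inequ:12} the paper itself uses later.
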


\begin{prop}
If the flow exists globally and converges smoothly to $(u_\infty,g_\infty)$ as $t\to \infty$, where $u_\infty$ is not a constant map, then $u_\infty$ must be a branched minimal immersion which is homotopic to the initial surface.
\end{prop}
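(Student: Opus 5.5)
The argument has three parts. First, the limit is a harmonic map with free boundary on $K$. Second, it is conformal, which with the regularity theory makes it a branched minimal immersion. Third, it is homotopic to $u_0$, which follows from smooth convergence.

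\textbf{Harmonicity.} By Lemma \ref{lem:01}, $E(u(t),g(t))$ is nonincreasing and
$$\int_0^\infty\Big(\|\tau(u,g)\|_{L^2(M)}^2+\tfrac14\Big|\int_M\phi(u)\,dM\Big|^2\Big)dt\le E(u_0,g_0)<\infty .$$
Smooth convergence of $(u(t),g(t))$ to $(u_\infty,g_\infty)$ as $t\to\infty$ means both integrands converge. A nonnegative integrable function on $[0,\infty)$ with a limit must have limit zero. Hence $\tau(u_\infty,g_\infty)=0$ and $\int_M\phi(u_\infty)\,dM=0$, where $\phi$ is computed in the flat coordinates $(\theta^1,\theta^2)$ attached to $g_\infty=g_{a_\infty,b_\infty}$. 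The free boundary conditions $u(\partial M)\subset K$ and $\partial_{\vec n}u\perp T_uK$ pass to the smooth limit. So $u_\infty$ is a harmonic map from the flat cylinder $(M,g_\infty)$ with free boundary on $K$.

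\textbf{Conformality.} This is the main step: we must upgrade "zero integral of $\phi$" to "$\phi\equiv0$". Since $u_\infty$ is harmonic, $\phi(u_\infty)\,dz^2$ is holomorphic in the interior. On a flat cylinder $\mathbf C/\Lambda$ cut along boundary lines, we need a boundary condition. I plan to choose coordinates in which $\partial M$ is given by $\{\theta^2=\text{const}\}$; the paper's normalization $x^2\in[0,\pi]$ makes the boundary curves $x^2=0,\pi$, which are horizontal lines in the $z$-plane only after a suitable choice of representative. On these lines the tangential and normal derivatives of $u_\infty$ are orthogonal, because $\partial_{\vec n}u_\infty\perp T K\ni\partial_{\rm tan}u_\infty$. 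So ${\rm Im}\,\phi=0$ on $\partial M$.

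Next I double the cylinder across its boundary, reflecting $\phi$ as $\phi(\bar z)=\overline{\phi(z)}$. This gives a holomorphic function on a flat torus, hence a constant $c$. The constant must be real because ${\rm Im}\,\phi=0$ on $\partial M$. Then $\int_M\phi\,dM=c\,\mathrm{Area}(M)=0$ forces $c=0$. I expect this step to need the most care: the evolution of $(a,b)$ in \eqref{equ:15} only varies the period $a+ib$ for the $S^1$-direction, and I have to check that the integral condition produced by the flow really is $\int_M\phi\,dM=0$ in the same coordinates in which the reflection is performed. If it is not, I would redo the computation of Lemma \ref{lem:01} directly in $x$-coordinates, expressing $\dot a,\dot b$ as the real and imaginary parts of $\int\phi$ against the holomorphic quadratic differentials on the doubled torus.

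\textbf{Branched minimal immersion.} Once $\phi\equiv0$, the map $u_\infty$ is weakly conformal and harmonic. Since it is nonconstant, the standard theory makes it a branched minimal immersion. In the interior this follows Gulliver–Osserman–Royden or the argument in \cite{Ding-Li-Liu}. At the free boundary, the branch points are isolated by the reflection/extension of Proposition \ref{prop:01} and the Hartman–Wintner type local analysis. The boundary meets $K$ orthogonally by the free boundary condition.

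\textbf{Homotopy.} Smooth convergence gives $u(t)\to u_\infty$ in $C^0$. So for large $t$, $u(t)$ and $u_\infty$ are joined by short geodesics in $N$. Near $\partial M$ we compose with the nearest-point projection onto $K$ to preserve the boundary condition. Combined with the continuous path $t\mapsto u(t)$, this shows that $u_\infty$ is homotopic to $u_0$ through maps in $C(K)$.
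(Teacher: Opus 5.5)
Your proposal is correct and follows the paper's route. Lemma \ref{lem:01} gives $\tau(u_\infty,g_\infty)=0$ and $\int_M\phi(u_\infty)\,dM=0$, and holomorphy of the Hopf differential with ${\rm Im}\,\phi=0$ on $\partial M$ forces $\phi(u_\infty)$ to be a constant, which the vanishing integral then kills. Your doubling argument supplies the justification for constancy that the paper asserts without proof, and your coordinate worry is moot: $\theta^2=\sqrt{b}\,x^2$ already makes $\partial M=\{x^2=0,\pi\}$ horizontal in exactly the coordinates used in Lemma \ref{lem:01}.
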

\begin{proof}
By the same proof as in Proposition 2.4 of \cite{Ding-Li-Liu}, we can find a time sequence $t_n\to\infty$ such that $(u(t_n),g(t_n))\to (u_\infty,g_\infty)$. Moreover, there hold $$\tau_\infty(u_\infty)=0\ \ and\ \ \int_{M}\phi(u_\infty)dx=0,$$ where $\phi(u)dz^2$ is the Hopf differential of $u$.

Since $\phi(u_\infty)$ is holomorphic on $M$ and $Im\phi(u_\infty)|_{\partial M}=0$, we know that $\phi(u_\infty)\equiv C$ where $C$ is a constant. Then we have $C=0$ which implies that $u_\infty$ must be a branched minimal immersion.
\end{proof}

With the help of above lemmas, by apply similar arguments as in \cite{Str1,Str2,Ma,Ding-Li-Liu}, we can show the following long time existence result. Since there are no technical difficulty, we will not give the detailed proof here.
\begin{thm}
For any initial value $g_0\in\mathbf{g}$ and $u_0\in W^{1,2}(M,N)$, there exist a unique solution $(u,g)$ of \eqref{equ:15}-\eqref{equ:16} on $M\times (0,\infty)$ and a finite singular points set $\mathbf{S}=\{(x_i,t_i)\}_{i=1}^m$,  such that $$g(t)\in C^{\frac{\alpha}{2}}_{loc}(M\times [0,\infty)\setminus \mathbf{S}),\ \ u(x,t)\in C^{2+\alpha,1+\frac{\alpha}{2}}_{loc}(M\times (0,\infty)\setminus \mathbf{S}).$$ Moreover, at each blow-up point $(x_i,t_i)$, there are some bubbles $\{w^j\}_{j=1}^{l_i}$ splitting off where each $w^j$ is either a harmonic $S^2$ or a harmonic $D_1(0)$ with free boundary condition $w^j(\partial D_1(0))\subset K$.
\end{thm}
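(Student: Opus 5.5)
The plan is to follow the standard scheme for the harmonic map heat flow (Struwe \cite{Str1,Str2}) and its Teichm\"uller-coupled version in \cite{Ding-Li-Liu}. For the boundary condition we borrow the free-boundary heat flow theory of Ma \cite{Ma}, together with the reflection across $K$ from Proposition \ref{prop:01}.

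\textbf{Step 1 (approximation and short time existence).} First approximate $u_0\in W^{1,2}(M,N)$ by smooth maps $u_0^k$ with $u_0^k(\partial M)\subset K$. For smooth data, the system \eqref{equ:15}-\eqref{equ:16} is a parabolic equation for $u$ with Neumann-type free boundary condition, coupled to an ODE for $(a,b)$. The ODE right-hand side depends only on integrals of $|\nabla u|^2$, so it is Lipschitz in $u$ in $W^{1,2}$. A fixed point argument in $C^{2+\alpha,1+\alpha/2}\times C^1([0,T])$, using the linear theory for the free boundary heat flow as in \cite{Ma}, then gives local existence and uniqueness.

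\textbf{Step 2 (a priori estimates).} Lemma \ref{lem:01} gives monotonicity of the energy, $E(u(t),g(t))\le E_0$, and $\int_0^\infty\|\tau\|_{L^2}^2\,dt\le E_0$. The corollary after it gives $C^{-1}g_0\le g(t)\le C(T)g_0$ on finite intervals, so the metrics stay in a compact part of $\mathbf{g}$. Since $|\dot a|+|\dot b|\le C(E_0,T)$, the metrics are uniformly Lipschitz in time, so the $u$-equation has uniformly elliptic, time-Lipschitz coefficients.

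Next I prove a Struwe-type local energy inequality on balls $B_R(x)\cap M$. Near $\partial M$, the boundary terms vanish by the free boundary condition, exactly as in Lemma \ref{lem:01}. This yields: if $\sup_{x}E(u(t);B_R(x)\cap M)<\epsilon_0$ on a time interval, then $\nabla u$ is bounded in $L^2_tW^{1,2}_x$, and in fact in $C^{\alpha}$ after using the $\epsilon$-regularity. At interior points this is classical. At boundary points I first reflect via $\sigma$, as in \eqref{def:function}, to obtain equation \eqref{equ:11} on a full neighborhood, and then apply interior parabolic estimates. The main obstacle is that the reflected nonlinearity $\Upsilon$ is only piecewise smooth across $\partial M$, so higher regularity cannot be bootstrapped directly from the reflection. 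My plan is to use the reflection only for the $\epsilon$-regularity, i.e. $W^{2,2}$ and H\"older bounds. After that I switch to Schauder estimates for the Neumann problem in Fermi coordinates adapted to $K$ (as in \cite{Ma}) to get $C^{2+\alpha,1+\alpha/2}$. Here $g\in C^{\alpha/2}$ in time, which is sufficient.

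\textbf{Step 3 (singular times and global solution).} The local energy inequality shows that energy can concentrate only at finitely many points, each carrying at least $\epsilon_0$ of energy. There are therefore at most $E_0/\epsilon_0$ singular points in total. At a singular time $t_i$, the maps $u(t)$ converge weakly in $W^{1,2}$ and smoothly away from the concentration points, and $g(t)\to g(t_i)\in\mathbf{g}$ by the Lipschitz bound. The energy drops by at least $\epsilon_0$, and I restart the flow from $(u(t_i),g(t_i))$. Uniqueness follows from a Gronwall estimate on the difference of two solutions in the energy class, as in \cite{Str1}, with the metric differences controlled through the ODE.

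To identify the bubbles, I rescale at $(x_i,t_i)$ along times where $\|\tau\|_{L^2}$ is small. The local energy inequality allows this choice because $\int\|\tau\|^2_{L^2}$ is finite. If the blow-up center stays at distance $\gg$ the scale from $\partial M$, the limit is a harmonic $S^2$. Otherwise the rescaled domains converge to a half plane with the free boundary condition, and the limit is a finite-energy harmonic map with free boundary on $K$. By removable singularities \cite{jost-Liu-Zhu} this is a harmonic $D_1(0)$ with $w(\partial D_1(0))\subset K$.
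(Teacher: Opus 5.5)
Your overall scheme is what the paper appeals to: it gives no proof of this theorem beyond citing Struwe, Ma and Ding--Li--Liu. Most of your outline is sound: monotonicity from Lemma \ref{lem:01}, non-degeneration of $g$ on finite intervals, Struwe's local energy inequality with the vanishing boundary term $\langle\partial_{\overrightarrow{n}}u,\partial_t u\rangle=0$, finiteness of $\mathbf{S}$, and bubble extraction at concentration scales $r_k$ along times with $r_k\|\tau(u(t_k))\|_{L^2}\to 0$.

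The step that fails as written is the boundary $\epsilon$-regularity. The reflection \eqref{def:function} and equation \eqref{equ:11} only make sense where $u$ takes values in $K_{\delta_0}$, and small local energy does not give this. A two-dimensional $W^{1,2}$ map of small energy can have large oscillation, so $u(\partial M)\subset K$ does not keep the image of a half-ball near $K$. For a smooth solution the image is near $K$ on some neighbourhood of $\partial M$, but the size of that neighbourhood is not controlled as $t\uparrow t_i$. That uniformity is exactly what you need for smooth convergence away from the concentration points at a singular time. Reflecting first is therefore circular. In the paper the order is the reverse: the oscillation bound of Lemma \ref{lem:small-energy-regularity}, or the small-oscillation step in the proof of Lemma \ref{lem:05}, comes before any reflection. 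So the real obstacle at the boundary is this circularity, not the piecewise smoothness of $\Upsilon$ that you single out.

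The repair is to prove the boundary $W^{2,2}$ bounds directly from the free boundary condition, as in Ma's work.
\begin{itemize}
\item In the flat coordinates $(\theta^1,\theta^2)$ of Lemma \ref{lem:01}, $\partial M$ is a straight line. Comparing $\int\varphi^2|\nabla^2u|^2$ with $\int\varphi^2|\Delta u|^2$ produces the boundary term $\int_{\partial M}\varphi^2\langle\partial^2_{\tau}u,\partial_{\overrightarrow{n}}u\rangle$.
\item Since $\partial_\tau u\in T_uK$ and $\partial_{\overrightarrow{n}}u\in T_uN\cap(T_uK)^\perp$, this term equals $\int_{\partial M}\varphi^2\langle A^K(u)(\partial_\tau u,\partial_\tau u),\partial_{\overrightarrow{n}}u\rangle$, where $A^K$ is the second fundamental form of $K$ in $N$.
\item It is therefore bounded by $C\int_{\partial M}\varphi^2|\nabla u|^3$. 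The trace inequality together with Ladyzhenskaya's inequality absorbs this when the local energy is small.
\item The estimate for $\partial_t u$ has the analogous boundary term $\langle A^K(u)(\partial_tu,\partial_tu),\partial_{\overrightarrow{n}}u\rangle$, handled the same way.
\end{itemize}
This yields $L^\infty_tW^{2,2}_x$, hence H\"older, bounds on boundary neighbourhoods of uniform size. Only then does $u$ map such a neighbourhood into $K_{\delta_0}$. At that point you may use either the reflection or Fermi coordinates adapted to $K$ for the Schauder step.
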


Lastly, in order to study the convergence of the flow, i.e. Theorem \ref{thm-main:04}, we first need to establish the following key proposition.
\begin{prop}\label{prop:02}
Suppose $(u,g)$  is a solution of \eqref{equ:15}-\eqref{equ:16} on $M\times (0,\infty)$. If
$$\lim_{t\to\infty}E(u(t),g(t))=0$$ and
 $$ \limsup_{t\to\infty}\left(\rho(t)^p\|\tau_{g_t}(u)\|_{L^2(M)}+t^r\rho(t)\right)\leq C$$ for some $p\in [0,1)$, $r\in (0,\frac{1}{100})$, then there exists a time sequence $t_n\to\infty$, such that  $$ \|\tau(u(x,t_n),g(t_n))\|_{L^2(M)}=o(1)\rho(t_n)^2,\ \ E(u(x,t_n);g(t_n))\leq C\rho(t_n)^2.$$
\end{prop}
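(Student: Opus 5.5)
We follow the strategy of \cite{Ding-Li-Liu}: derive a differential inequality for the energy $E(t):=E(u(t),g(t))$ along the flow, and then choose the times $t_n$ by a pigeonhole argument. Lemma \ref{lem:01} gives
$$\frac{d}{dt}E(t)=-\|\tau_{g}(u)\|^2_{L^2(M)}-\frac14\Big|\int_M\phi(u)\,dM\Big|^2,$$
so $E$ is nonincreasing. Integrating from $t$ to $\infty$ and using $E(\infty)=0$ yields $\int_t^\infty\|\tau\|^2_{L^2}\,ds=E(t)-\tfrac14\int_t^\infty|\int\phi|^2$. The core task is to prove a \emph{lower bound} on the dissipation in terms of $E$ itself. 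Once that is done, $E$ decays fast enough compared with $\rho(t)^2$, which is controlled from below by $t^{-r}$.

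\textbf{Step 1: Work in the degenerate chart.} In the degenerate chart, $(M,g(t))$ is the flat cylinder or half-cylinder of Section 2 with length $T\sim\rho^{-2}$. We rescale as in the proof of Theorem \ref{thm-main:03}. Because no bubble occurs and $E\to0$, the small-energy regularity Lemmas \ref{lem:small-energy-regularity-1} and \ref{lem:small-energy-regularity} apply on all of $M$. For Type II degeneration we extend across the free boundary via $\hat u$ (Proposition \ref{prop:01}).

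\textbf{Step 2: Tangential energy estimate.} We redo the computation of Theorem \ref{thm:01}, inequality \eqref{inequ:01}, over the whole cylinder. This gives
$$\int_M|\partial_\theta u|^2\le C\epsilon\int_M|\nabla u|^2+C\|\tau\|_{L^2}\Big(\int_M|\partial_\theta u|^2\Big)^{1/2}+\text{(boundary slice terms at } t=\pm T).$$
These boundary slice terms are the bad ones: they are only homogeneous of degree one in $E$. To handle them, we use the exponential decay of Lemma \ref{lem:04} near the ends (boundary Type I) or near the free boundary of the extended map. The goal is to bound the slice energy by $C\rho^2E+C\|\tau\|_{L^2}^2$, which yields
$$\int_M|\partial_\theta u|^2\le C\rho^2E+C\|\tau\|^2_{L^2}.$$

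\textbf{Step 3: Lower bound on the dissipation.} Since $\int_M\big(|\partial_s u|^2-|\partial_\theta u|^2\big)$ is the real part of $\int\phi$ (in the $\theta^i$ coordinates, up to the normalisation by $\rho$), we get
$$E\le\tfrac12\Big|\int\phi\Big|+\int|\partial_\theta u|^2\le\tfrac12\Big|\int\phi\Big|+C\rho^2E+C\|\tau\|^2.$$
For large $t$, $\rho\to0$, so $C\rho^2\le\frac12$ and $E\le C(|\int\phi|+\|\tau\|^2)$. Hence
$$-E'\ge\|\tau\|^2+\tfrac14\Big|\int\phi\Big|^2\ge c\,\min\{E^2,E\}=cE^2.$$
This gives $E(t)\le C/t$.

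\textbf{Step 4: Choosing $t_n$.} We need $\|\tau\|=o(\rho^2)$ and $E\le C\rho^2$. The assumption $t^r\rho\le C$ gives $\rho\ge ct^{-r}$ with $r<1/100$, so $\rho(t)^2\ge ct^{-2r}\gg t^{-1}\ge E/C$. Thus $E\le C\rho^2$ holds for all large $t$. For the tension, note that $\int_t^{2t}\|\tau\|^2\le E(t)\le C/t$, so there are $t_n\in[t,2t]$ with $\|\tau(t_n)\|^2\le C/t^2$, i.e. $\|\tau(t_n)\|\le Ct_n^{-1}=o(t_n^{-2r})=o(\rho(t_n)^2)$.

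The hypothesis $\rho^p\|\tau\|\le C$ enters in two places. First, it guarantees $\|\tau\|_{L^2}\cdot\rho\to0$, which is needed for the small-energy lemmas after rescaling. Second, it lets the error terms of Lemma \ref{lem:04} (which involve $\|\tau\|^2T^2\sim\|\tau\|^2\rho^{-4}$) be absorbed when we restrict to the good times.

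\textbf{Main obstacle.} The hardest step is Step 2: the boundary slice integral $\int_{\{T\}\times S^1}|\nabla u|^2$ has to be improved to $O(\rho^2E)$. We plan to obtain this from the decay estimate of Lemma \ref{lem:04} applied to the extended map $\hat u$, combined with averaging the slice over a unit interval and choosing a good slice by Fubini. This is precisely where the extension across the free boundary and the decay estimate from Theorem \ref{thm-main:03} are essential.
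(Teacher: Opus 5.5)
There is a genuine gap in Step 4: you read the hypothesis backwards. The condition $\limsup_{t\to\infty}t^r\rho(t)\le C$ means $\rho(t)\le Ct^{-r}$. That is an \emph{upper} bound on $\rho$, not $\rho\ge ct^{-r}$. With the correct reading your inequality $\rho(t)^2\ge ct^{-2r}\gg t^{-1}$ is not available, and the hypotheses give no lower bound on $\rho$ at all. So from $E\le C/t$ and $\|\tau(t_n)\|_{L^2}\le C/t_n$ you can conclude neither $E\le C\rho^2$ nor $\|\tau(t_n)\|_{L^2}=o(\rho(t_n)^2)$.

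The lower bound on $\rho$ is the real content of the proposition, and it has to come from the metric equation. One has $|b^{-1}\frac{db}{dt}|\le E$ and $\rho=b^{-1/2}$, so you need $\int^t E\le \log t+C$. That is, you need $E(t)\le \frac1t+\frac{C}{t^{1+2r}}$ with leading constant exactly $1$. Your Step 3 only gives $-E'\ge cE^2$ with an unspecified $c$. This yields $\rho\ge ct^{-1/(2c)}$, which is useless for $c<1$. The paper instead keeps the sharp form
$\frac{d}{dt}E=-(1-C\rho^2)E^2-(1-o(1))\|\tau\|_{L^2}^2+CE^{5/2}$ (Lemma \ref{lem:06}). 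This is exactly where $\rho\le Ct^{-r}$ is used: it gives $\frac{d}{dt}E^{-1}\ge 1-Ct^{-2r}$ with an integrable defect. Hence $b+b^{-1}\le Ct$, so $\rho^2\ge c/t$ and $E\le C\rho^2$.

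Even then the estimate is borderline. Pigeonholing $\int_t^{2t}\|\tau\|_{L^2}^2\le E(t)\le C/t$ only gives $\|\tau(t_n)\|_{L^2}\le C/t_n\le C\rho(t_n)^2$, not $o(\rho(t_n)^2)$. The paper upgrades this by contradiction:
\begin{itemize}
\item Suppose $\|\tau\|_{L^2}\ge\delta\rho^2\ge ct^{-1}$ for all large $t$.
\item Since $E\le(1+o(1))/t$, the dissipation identity gives $E'\le-(1+c')E^2$, hence $E\le(1-\epsilon_0)/t$.
\item Feeding this into the $b$-equation improves the lower bound to $\rho^2\ge ct^{\epsilon_0-1}$.
\item A pigeonhole time with $-E'(t_n)\le Ct_n^{-2}$ then gives $\|\tau(t_n)\|_{L^2}\le Ct_n^{-1}\le Ct_n^{-\epsilon_0}\rho(t_n)^2$, contradicting the assumption.
\end{itemize}

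On Step 2, your idea (reflect across the free end and use the decay estimate) is the paper's, but you should make the mechanism explicit. The factor $\rho^2E$ comes from the Pohozaev constant: averaging its definition over the length $T_t\sim\rho^{-2}$ gives $|\alpha_t|\le C(E/T_t+\sqrt{E}\,\|\tau\|_{L^2})$. The decay inequality then bounds the end energy by $C|\alpha_t|$ plus tension terms. Note also that the $\sqrt{E}\,\|\tau\|_{L^2}$ term, which you dropped, must be carried through.
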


We divide the proof of Proposition \ref{prop:02} into two steps corresponding to two types of degenerations.

\

\noindent\textbf{Type I degeneration:} the degenerated curve is a closed geodesic curve.

\

Let $\rho(t)$ be the length of the shortest closed geodesic of $(M,g(t))$. In this case, we can see $(M,g(t))$ as a cylinder $P_t=[-\frac{1}{2}\rho(t)^{-1},\frac{1}{2}\rho(t)^{-1}]\times \rho(t)\frac{1}{2\pi}S^1$ and view $u(t)$ as a map from $P_t$ to $N$ with free boundary condition $u(t)|_{\partial P_t}\subset K$.

Set $\tilde{P}_t=2\pi \rho(t)^{-1}P_t=[-\pi\rho(t)^{-2},\pi\rho(t)^{-2}]\times S^1$ and $v(t,x)=u(t,\frac{1}{2\pi}\rho(t)x)$, $x\in \tilde{P}_t$. Then it is easy to see that $v(t,x)$ is a map from $\tilde{P}_t$ to $N$ with free boundary $v(t,x)|_{\partial \tilde{P}_t}\subset K$ and with tension field $\tau(v)(t,x)=(\frac{\rho(t)}{2\pi})^2\tau(u)(t,\frac{1}{2\pi}\rho(t)x)$. Moreover, we have
\begin{align*}
E(v(t,x),\tilde{P}_t)=E(u(t,x),P_t),\ \ \left| \int_{\tilde{P}_t}\phi(v)dx  \right|=\left| \int_{P_t}\phi(u)dx  \right|,\\
\|\tau(v)\|_{L^1(\tilde{P}_t)}=\|\tau(u)\|_{L^1(P_t)},\ \ \|\tau(v)\|_{L^2(\tilde{P}_t)}=\frac{\rho(t)}{2\pi}\|\tau(u)\|_{L^2(P_t)}.
\end{align*}

\

Denote $T_t:=\pi\rho(t)^{-2}$, then $\tilde{P}_t= [-T_t,T_t]\times S^1$. Since $\sup_t\rho(t)^p\|\tau_{g_t}(u)\|_{L^2(M)}\leq C,\ p\in [0,1)$, we have $$\|\tau(v)\|_{L^2(\tilde{P}_t)}\leq C T_t^{-\frac{1-p}{2}}.$$  Thanks to the free boundary condition, we first have  following energy estimate near the boundary.

\begin{lem}\label{lem:05}
Under the assumptions of Proposition \ref{prop:02}, we have
$$E(v;[-T_t,-T_t+1]\times S^1)+E(v;[T_t-1,T_t]\times S^1)\leq C\left(\rho^2(t)E(v)+\sqrt{E(v)}\|\tau(v)\|_{L^2}+ \|\tau(v)\|^2_{L^2}\right).$$
\end{lem}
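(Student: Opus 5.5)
The plan is to reflect $v$ across the free boundary circle $\{T_t\}\times S^1$ (and symmetrically at $\{-T_t\}\times S^1$) using the involution $\sigma$ of \eqref{def:01}. This turns the boundary slab into an interior slab of a longer cylinder, where the three-circle type argument from the proof of Lemma \ref{lem:04} applies. I only describe the right end; the left end is identical.

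\textbf{Step 1 (reflection).} Since $E(v)\to0$ and $\|\tau(v)\|_{L^2(\tilde P_t)}\le CT_t^{-(1-p)/2}\to0$, the small energy regularity Lemma \ref{lem:small-energy-regularity-1} makes the oscillation of $v$ on every unit slab small. Because $v(\{T_t\}\times S^1)\subset K$, the image of $[T_t-L,T_t]\times S^1$ lies in $K_{\delta_0}$ provided $L\cdot C\sup_{\text{slabs}}\big(\|\nabla v\|_{L^2}+\|\tau(v)\|_{L^2}\big)\le\delta_0$. I would fix such an $L=L_t\to\infty$. Define $\hat v(t,\theta)=\sigma(v(2T_t-t,\theta))$ for $t\in[T_t,T_t+L_t]$. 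Exactly as in Proposition \ref{prop:01}, with the roles of $t$ and $\theta$ exchanged, $\hat v\in W^{2,2}$ and $\hat v$ satisfies \eqref{equ:11} on $[T_t-L_t,T_t+L_t]\times S^1$, with $\hat\Gamma$ controlled by $\tau(v)$.

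\textbf{Step 2 (differential inequality).} Set $f(s)=\int_{T_t-s}^{T_t+s}\int_{S^1}|\nabla\hat v|^2$. This is comparable to $2\int_{T_t-s}^{T_t}\int_{S^1}|\nabla v|^2$, because $(D\sigma)^*D\sigma$ is close to the identity near $K$. Testing \eqref{equ:11} against $\hat v-\hat v^*$, as in \eqref{inequ:01}--\eqref{inequ:03} and \eqref{inequ:10}, gives
\[
\tfrac34 f(s)\le 3f'(s)+C\|\tau(v)\|_{L^2}^2+\int_{T_t-s}^{T_t+s}\!\!\int_{S^1}\big(|\partial_t\hat v|^2-|\partial_\theta\hat v|^2\big).
\]
By Lemma \ref{lem:pohozaev-constant} and the reflection symmetry, the last term equals $2s\alpha$ up to an error of order $s\|\tau(v)\cdot\partial_tv\|_{L^1}\le s\sqrt{E(v)}\|\tau(v)\|_{L^2}$. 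Here $\alpha$ is the Pohozaev constant of $v$ on $\tilde P_t$. Its boundary contribution vanishes on the $S^1$ case, since the cylinder has no lateral boundary.

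\textbf{Step 3 (bounding $\alpha$ and integrating).} Integrate the definition of $\alpha$ over $t\in[-T_t,T_t]$:
\[
2T_t|\alpha|\le 2E(v)+CT_t\sqrt{E(v)}\|\tau(v)\|_{L^2}.
\]
Hence
\[
|\alpha|\le C\rho(t)^2E(v)+C\sqrt{E(v)}\|\tau(v)\|_{L^2},
\]
using $T_t=\pi\rho(t)^{-2}$. This is precisely the source of the $\rho^2E$ term. Integrating $\big(e^{-s/4}f\big)'\ge -(C|\alpha|+C\sqrt E\|\tau\|)se^{-s/4}-C\|\tau\|^2e^{-s/4}$ from $1$ to $L_t$ then yields
\[
f(1)\le Ce^{-L_t/4}E(v)+C\rho^2E(v)+C\sqrt{E(v)}\|\tau(v)\|_{L^2}+C\|\tau(v)\|^2_{L^2}.
\]

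\textbf{Main obstacle.} The hard point is absorbing $e^{-L_t/4}E(v)$ into $\rho^2E(v)$. This requires $L_t\ge 8\log(1/\rho(t))$ while keeping the reflected region inside $K_{\delta_0}$, i.e. $L_t\cdot\sup_{\text{slabs}}\|\nabla v\|_{L^2}\ll1$.

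\begin{itemize}
\item \emph{Via decay.} I would first try to use the decay of Lemma \ref{lem:04}, iterated from the ends inward, so that the slab energies near the boundary are already small. Then the cumulative oscillation over $\log(1/\rho)$ slabs is controlled by $\sqrt{E(v)}\log(1/\rho)$ plus geometric sums.
\item \emph{Via the hypotheses.} Failing that, I would use the hypotheses $t^r\rho(t)\le C$ and $E(v)\to0$, together with the energy decay in Lemma \ref{lem:01}. The aim is to choose $L_t\sim\log(1/\rho(t))$ compatible with the smallness of $E$.
\end{itemize}

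If neither works, a fallback is to replace the full reflection by a reflection only on a region of fixed length, and to handle the remaining interior part directly via Theorem \ref{thm:01}-type estimates on $[-T_t+1,T_t-1]$.
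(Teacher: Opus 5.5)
Your architecture is the paper's. You reflect with $\sigma$ across the free-boundary circle, run the differential inequality of Lemma \ref{lem:04} for $f$, bound the Pohozaev constant by integrating its definition over $[-T_t,T_t]$, and absorb the exponential tail into $\rho^2E(v)$. (The $D\sigma$-distortion on the reflected half only adds a $C\epsilon f(s)$ term, absorbed as in Theorem \ref{thm:01}.)

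The gap is the step you flag as the main obstacle. You never show that the reflection region can be taken long enough, and none of your three routes closes as stated.
\begin{itemize}
\item In the ``via decay'' bullet, a cumulative oscillation of order $\sqrt{E(v)}\log(1/\rho)$ is not known to be small. Nothing at this stage ties the rate of $E\to0$ to $\rho$.
\item The ``via the hypotheses'' route is circular. The rate $E\le C\rho^2$ is derived in Lemma \ref{lem:06}, and that derivation uses the present lemma.
\item The fallback with a fixed reflection length $L$ leaves $e^{-L/4}E(v)$ as a fixed fraction of $E(v)$, which is not $O(\rho^2E(v))$.
\end{itemize}

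The missing idea is to use your Step 3 bound \emph{before} reflecting, together with the hypothesis $p<1$. You have $\|\tau(v)\|_{L^2(\tilde P_t)}\le CT_t^{-(1-p)/2}$ and $|\alpha_t|\le C(E(v)/T_t+\sqrt{E(v)}\|\tau(v)\|_{L^2})$. Hence for every $q\in(0,\frac{1-p}{2})$,
\[
\big(|\alpha_t|+\|\tau(v)\cdot\partial_s v\|_{L^1}+\|\tau(v)\|_{L^2}^2\big)\,T_t^{q}\to 0 .
\]
Lemma \ref{lem:04} bounds the energy of the $i$-th unit slab from the end by $CE(v)e^{-i/6}+C\big(|\alpha_t|+\|\tau(v)\cdot\partial_sv\|_{L^1}+\|\tau(v)\|^2_{L^2}\big)$. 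Now sum the slab oscillations as in the proof of Proposition \ref{prop:03}. The $\sqrt{E(v)}$ part is a convergent geometric series, not something multiplied by the number of slabs. Only the polynomially small remainder gets multiplied by the slab count:
\[
\mathrm{osc}_{[-T_t,-T_t+T_t^{q/2}]\times S^1}\,v\le C\sqrt{E(v)}+CT_t^{q/2}\big(|\alpha_t|+\|\tau(v)\cdot\partial_sv\|_{L^1}+\|\tau(v)\|_{L^2}^2\big)^{1/2}=o(1).
\]
So you may reflect on a region of polynomial length $L_t=T_t^{q/2}$, far more than the $8\log(1/\rho)$ you need. Integrating the differential inequality up to $\frac12T_t^{q/2}$ leaves a tail $Ce^{-T_t^{q/2}/8}E(v)\le CT_t^{-1}E(v)\le C\rho^2E(v)$. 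This is how the paper's proof runs, and with this inserted the rest of your argument goes through.
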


\begin{proof}

By the definition of Pohozaev type constant $\alpha(t)$ for $v$, we have
\begin{align*}
|\alpha_tT_t|&=\left|\int_{[0,T_t]\times S^1}\left(\left|\frac{\partial v}{\partial s}\right|^2-\left|\frac{\partial v}{\partial \theta}\right|^2\right)dsd\theta-\int_0^{T_t}\int_{[0,\xi]\times S^1}\tau(v)\cdot \frac{\partial v}{\partial s}dsd\theta d\xi\right|\\
&\leq 2E(v)+ \|\tau(v)\|_{L^2}\sqrt{E(v)}T_t
\end{align*} which implies that
\begin{equation}
|\alpha_t|\leq C\left(\frac{1}{T_t}E(v)+\sqrt{E(v)}\|\tau(v)\|_{L^2(\tilde{P}_t)}\right).
\end{equation}
Noting that $\|\tau(v)\|_{L^2(\tilde{P}_t)}\leq CT_t^{-\frac{1-p}{2}}$, then for any $q\in (0,\frac{1-p}{2})$, we have $$\lim_{t\to\infty}\left(\|\tau(v)\|^2_{L^2(\tilde{P}_t)}+\|\tau(v)\cdot \partial_t v\|_{L^1(\tilde{P}_t)}\right)\cdot T_t^{q}=0,\ \ and\ \ \lim_{t\to\infty}\alpha_t\cdot T_t^{q}=0.  $$

Since there is no energy concentration, i.e. $\lim_{t\to\infty}E(v;\tilde{P}_t)=0$, by the proof of Proposition \ref{prop:03}, we have $$osc_{[-T_t,-T_t+T_t^{\frac{q}{2}}]\times S^1}v=o(1).$$  Thus we can extend $v(t,x)$ across the free boundary $\{-T_t\}\times S^1$. Set $w(s,\theta):=v(-T_t+s,\theta),\ (s,\theta)\in [0,T_t^{\frac{q}{2}}]\times S^1$. Then $osc_{[0,T_t^{\frac{q}{2}}]\times S^1}w=o(1)$. Since $w$ satisfies the free boundary condition $w|_{\{0\}\times S^1}\subset K$, by involution map $\sigma$ (see \eqref{def:01}), similarly to definition \eqref{def:function}, we define
\begin{align}\label{def:function-2}
\hat{w}(s,\theta)=
\begin{cases}
w(s,\theta),\quad &if \quad (s,\theta)\in [0,T_t^{\frac{q}{2}}]\times S^1;\\
\sigma(w(-s,\theta)) ,\quad &if \quad (t,\theta)\in [-T_t^{\frac{q}{2}},0]\times S^1.
\end{cases}
\end{align}
By Proposition \ref{prop:01}, we know that $\hat{w}\in W^{2,p}([-T_t^{\frac{q}{2}},T_t^{\frac{q}{2}}]\times S^1)$ and
\begin{equation}
\Delta \hat{w}+\Upsilon_{\hat{w}}(\nabla\hat{w},\nabla\hat{w})=\hat{\Gamma}\quad in \quad [-T_t^{\frac{q}{2}},T_t^{\frac{q}{2}}]\times S^1,
\end{equation}
where $\Upsilon_{\hat{w}}(\cdot,\cdot)$ is a bounded bilinear form defined by
\begin{align*}
\Upsilon_{\hat{w}}(\cdot,\cdot)=
\begin{cases}
A(\hat{w})(\cdot,\cdot)\ &in\ [0,T_t^{\frac{q}{2}}]\times S^1,\\
A(\hat{w})(\cdot,\cdot)-D^2\sigma|_{\sigma(\hat{w})}(D\sigma|_{\hat{w}}\circ\cdot\ ,
D\sigma|_{\hat{w}}\circ\cdot)\ &in\ [-T_t^{\frac{q}{2}},0]\times S^1;
\end{cases}\end{align*} satisfying
\[
|\Upsilon_{\hat{w}}(\nabla\hat{w},\nabla\hat{w})|\leq C(K,N)|\nabla\hat{w}|^2
\] and $\hat{\Gamma}$ is defined by
\begin{align*}
\hat{\Gamma}=
\begin{cases}
\tau(w)(x)\ &in\ [0,T_t^{\frac{q}{2}}]\times S^1,\\
D\sigma|_{\sigma(\hat{w})}\circ \tau(w)(\rho(x))\ &in\ [-T_t^{\frac{q}{2}},0]\times S^1.
\end{cases}\end{align*}

Similarly to the proof of \eqref{inequ:01}, \eqref{inequ:02}, \eqref{inequ:03} and \eqref{inequ:10} (taking $t_n=0$), we get  that for any $\xi\in [0,T_t^{\frac{q}{2}}]$, there holds
\begin{align*}
&\int_{0}^{\xi}\int_{S^1}|\nabla w|^2d\theta ds+\int_{0}^{\xi}\int_{S^1}\left(\left|\frac{\partial w}{\partial\theta}\right|^2- \left|\frac{\partial w}{\partial s}\right|^2\right)d\theta ds\notag\\
&\leq C\epsilon\int_{0}^{\xi}\int_{S^1}|\nabla w|^2dtd\theta+\frac{1}{8}\int_{0}^{\xi}\int_{S^1}|\nabla w|^2dsd\theta +C\|\tau(w)\|^2_{L^2([0,\sqrt{T}_t]\times S^1)}+3\int_{\{\xi\}\times S^1}|\nabla w|^2d\theta.
\end{align*} Denoting $f(\xi):=\int_{0}^{\xi}\int_{S^1}|\nabla w|^2d\theta ds$ and taking $C\epsilon\leq\frac{1}{8}$, we get
\begin{align*}
\frac{3}{4}f(\xi)&\leq  3f'(\xi) +C\|\tau(w)\|^2_{L^2(Q_n^+)}+\int_{0}^{\xi}\int_{S^1}\left( \left|\frac{\partial w}{\partial s}\right|^2-\left|\frac{\partial w}{\partial\theta}\right|^2\right)dsd\theta\\&\leq 3f'(\xi)+C\|\tau(w)\|^2_{L^2(Q_n^+)}+|\alpha_n|\xi +2\int_{0}^{\xi}\int_0^{-T_t+s}\int_{S^1}\tau(v)\cdot \partial_t v dtd\theta ds\\&\leq 3f'(\xi)+|\alpha_n|\xi +C\|\tau(v)\|^2_{L^2(Q_n^+)}+2\sqrt{E}\|\tau(v)\|_{L^2}\xi\\
&\leq 3f'(\xi)+|\alpha_n|\xi +C\left(\|\tau(v)\|^2_{L^2}+\sqrt{E}\|\tau(v)\|_{L^2}\right)(1+\xi)
\end{align*} which implies
\begin{equation}
\left(e^{-\frac{1}{4}\xi}f(\xi)\right)'\geq -|\alpha_t|\xi e^{-\frac{1}{4}\xi}-C\left(\|\tau(v)\|^2_{L^2}+\sqrt{E}\|\tau(v)\|_{L^2}\right) (1+\xi)e^{-\frac{1}{4}\xi}.
\end{equation}

Integrating from $2$ to $\frac{1}{2}T_t^{\frac{q}{2}}$, we get
\begin{align*}
f(2)&=\int_{0}^2\int_0^{2\pi}|\nabla w|^2d\theta dt\\&\leq Ce^{-\frac{1}{8}T_t^{\frac{q}{2}}}f(\frac{1}{2}T_t^{\frac{q}{2}})+ C|\alpha_t|+C\left(\|\tau(v)\|^2_{L^2}+\sqrt{E}\|\tau(v)\|_{L^2}\right)\\
&\leq Ce^{-\frac{1}{8}T_t^{\frac{q}{2}}}\int_{-T_t}^{-T_t+\frac{1}{2}T_t^{\frac{q}{2}}}\int_0^{2\pi}|\nabla v|^2d\theta dt + C|\alpha_t|+C\left(\|\tau(v)\|^2_{L^2}+\sqrt{E}\|\tau(v)\|_{L^2}\right).
\end{align*}

Thus, we get
\begin{align*}
\int_{[-T_t,-T_t+2]\times S^1}|\nabla v|^2dsd\theta&\leq  C\left(\frac{1}{T_t}E(v)+\sqrt{E(v)}\|\tau(v)\|_{L^2}+ \|\tau(v)\|^2_{L^2}\right)\\
&\leq C\left(\rho^2(t)E(v)+\sqrt{E(v)}\|\tau(v)\|_{L^2}+ \|\tau(v)\|^2_{L^2}\right).
\end{align*}
Similarly, we can prove
\begin{equation*}
\int_{[T_t-2,T_t]\times S^1}|\nabla v|^2dsd\theta\leq C\left(\rho^2(t)E(v)+\sqrt{E(v)}\|\tau(v)\|_{L^2}+ \|\tau(v)\|^2_{L^2}\right).
\end{equation*}
We proved the lemma.
\end{proof}

\begin{lem}\label{lem:06}
Under the assumptions of Proposition \ref{prop:02}, we have
\begin{align*}
\frac{d}{dt}E=-\|\tau(u)\|^2_{L^2}-\frac{1}{4}|\Phi|^2=-(1-C\rho^2(t))E^2-(1-o(1))\|\tau(u)\|^2_{L^2}+CE^{\frac{5}{2}},
\end{align*} where $|\Phi|:=\left|\int_M\phi(u)dM\right|$. Moreover, this yields $$E(t)\leq C\rho(t)^2 $$ when $t$ is big enough.
\end{lem}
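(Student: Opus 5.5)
The plan is to start from Lemma \ref{lem:01}, which gives $\frac{d}{dt}E=-\|\tau(u)\|^2_{L^2}-\frac14|\Phi|^2$, and to bound $|\Phi|$ from below by $E$. Work in the rescaled picture $v$ on $\tilde P_t=[-T_t,T_t]\times S^1$. Conformal invariance gives $E(v)=E(u)$ and $|\int\phi(v)|=|\int\phi(u)|$. For a cylinder of unit area whose collapsing direction is $\theta$, the real part of $\int\phi$ is
\[
\mathrm{Re}\,\Phi=\int\bigl(|\partial_s u|^2-|\partial_\theta u|^2\bigr)=2E-2\int|\partial_\theta u|^2 .
\]
So it suffices to show that the tangential energy $\int|\partial_\theta v|^2$ is a lower-order term:
\[
\int_{\tilde P_t}|\partial_\theta v|^2\le C\rho^2E+CE^{3/2}+o(1)\|\tau\|_{L^2}+C\|\tau(v)\|_{L^2}\sqrt E .
\]
Then $|\Phi|^2\ge 4E^2(1-C\rho^2)-CE^{5/2}-o(1)\|\tau\|^2$ after absorbing cross terms with Young's inequality, and $\|\tau(v)\|$ is rescaled back to $\|\tau(u)\|$.

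To get the tangential bound, rerun the computation \eqref{inequ:01} globally on $\tilde P_t$ with $t_n=0$ and $s=T_t$. That is, multiply the equation by $v-v^*$ and integrate. The interior quadratic term is bounded by $C\,\mathrm{osc}\cdot E\le CE^{3/2}$, using the small-energy regularity Lemma \ref{lem:small-energy-regularity-1}, which gives $\mathrm{osc}\le C\sqrt E$ per unit slab. The tension term is bounded by $\|\tau(v)\|\,\|\partial_\theta v\|$, which is absorbed. Compared with Ding--Li--Liu, the new ingredient is the two boundary integrals $\int_{\{\pm T_t\}\times S^1}\partial_s v\,(v-v^*)$. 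These are bounded by $\int_{\{\pm T_t\}\times S^1}|\nabla v|^2$, which is only homogeneous of degree one in $E$. To handle them, average over $\xi\in[0,1]$, or use the trace and $W^{2,2}$ estimates near the boundary from Lemma \ref{lem:small-energy-regularity-1}. This reduces them to $E(v;[-T_t,-T_t+2]\times S^1)$ plus the $\tau$ terms, and Lemma \ref{lem:05} bounds that quantity by $C(\rho^2E+\sqrt E\|\tau\|+\|\tau\|^2)$. This is exactly the source of the $C\rho^2(t)E^2$ correction in the statement.

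For the \emph{Moreover} part, the ODE inequality gives $E'\le-(1-C\rho^2)E^2+CE^{5/2}$, since the $\tau$ term has a good sign for large $t$. With $E\to0$ and $\rho\to0$, this yields $E'\le-\frac12E^2$, so $E(t)\le C/t$. This alone is not $\rho^2$. The extra input is the assumption $t^r\rho(t)\le C$, i.e.\ $\rho^2\gtrsim t^{-2r}$, which gives $C/t\le C\rho(t)^2$ because $1>2r$.

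I expect the main obstacle to be controlling the boundary traces $\int_{\{\pm T_t\}\times S^1}|\nabla v|^2$ by bulk energy near the boundary, with constants independent of $t$. The plan is to use the reflected map $\hat w$ from Lemma \ref{lem:05}, together with interior $W^{2,2}$ estimates on $[-1,1]\times S^1$ for the extended equation, so that the trace inequality turns the pointwise slice into the slab energy already estimated. A secondary point to check is that the $\tau$ cross terms can all be written with coefficient $1-o(1)$ on $\|\tau(u)\|^2$. This relies on $\|\tau(v)\|=\frac{\rho}{2\pi}\|\tau(u)\|$ and $E\to0$.
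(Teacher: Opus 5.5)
Your derivation of the energy identity is essentially the paper's argument. You test the equation against $v-v^*$, control the boundary traces by the near-boundary slab energy and Lemma \ref{lem:05}, and then use $|\Phi|^2\ge 4E^2-8E\int|\partial_\theta v|^2$; your trace/$W^{2,2}$ handling of the boundary term is, if anything, more careful than the paper's.

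The gap is in the ``Moreover'' part. The hypothesis $t^r\rho(t)\le C$ says $\rho(t)\le Ct^{-r}$, i.e.\ $\rho^2\lesssim t^{-2r}$. That is an \emph{upper} bound on $\rho$, not the lower bound $\rho^2\gtrsim t^{-2r}$ you wrote. From $E\le C/t$ and an upper bound on $\rho$ you cannot conclude $E\le C\rho^2$: if $\rho$ decayed like $t^{-1}$, the hypothesis would hold but $\rho^2\ll 1/t$. What is missing is a lower bound on $\rho$. It has to come from the evolution of the conformal parameter $b$, which your proposal never uses.

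In the paper, the upper bound $\rho\le Ct^{-r}$ serves only to make the correction terms in the ODE decay. This gives $\frac{d}{dt}E^{-1}\ge 1-C\rho^2-CE^{1/2}\ge 1-Ct^{-2r}$, and hence the sharp asymptotic $E(t)\le \frac1t+Ct^{-1-2r}$ with leading constant exactly $1$. The $b$-equation $\frac{db}{dt}=-\frac b2\int_M(|\partial_{\theta^1}u|^2-|\partial_{\theta^2}u|^2)$ then gives $|b^{-1}\frac{db}{dt}|\le E$. So $|\log b(t)|\le\int^t E\le \log t+C$, i.e.\ $b(t)\le Ct$. Since $\rho=b^{-1/2}$, this yields $\rho(t)\ge ct^{-1/2}$, and therefore $E\le C/t\le C\rho^2$.

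The leading constant matters here. Your crude bound $E\le 4/t$, from $E'\le-\frac12E^2$, would only give $b\lesssim t^4$, i.e.\ $\rho^2\gtrsim t^{-4}$, which is too weak.
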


\begin{proof}
Denote $v^*(s)=\frac{1}{2\pi}\int_{\{s\}\times S^1}vd\theta$. By small energy regularity, we have $$\|v-v^*\|_{L^\infty(\tilde{P}_t)}\leq C(\sqrt{E(v)}+\|\tau(v)\|_{L^2}).$$ Integrating by parts, by Lemma \ref{lem:05}, we get
\begin{align*}
\int_{\tilde{P}_t}\left|\frac{\partial v}{\partial \theta} \right|^2dsd\theta&=\int_{\tilde{P}_t}\nabla v\nabla (v-v^*)dsd\theta\\&=-\int_{\tilde{P}_t}\Delta v\cdot (v-v^*)dsd\theta+\int_{\partial \tilde{P}_t}\frac{\partial v}{\partial s} (v-v^*)d\theta\\
&\leq C(\sqrt{E(v)}+\|\tau(v)\|_{L^2})(E(v)+\|\tau(v)\|_{L^2})+C\int_{\partial \tilde{P}_t}|\nabla v|^2d\theta\\
&\leq C(\sqrt{E(v)}+\|\tau(v)\|_{L^2})(E(v)+\|\tau(v)\|_{L^2})\\&\quad +CE(v;[-T_t,-T_t+1]\times S^1)+E(v;[T_t-1,T_t]\times S^1)\\
&=C\left(\rho^2(t)E(v)+E^{\frac{3}{2}}(v)+\sqrt{E(v)}\|\tau(v)\|_{L^2}+ \|\tau(v)\|^2_{L^2}\right).
\end{align*}

Since
\begin{align*}
|\Phi|^2&\geq 4E^2(v)-8E\int_{\tilde{P}_t}\left|\frac{\partial v}{\partial \theta} \right|^2dsd\theta\\&=(4-C\rho^2(t))E^2(v)-CE^{\frac{5}{2}}-o(1)\|\tau(v)\|^2_{L^2},
\end{align*} then
\begin{align}\label{equ:07}
\frac{d}{dt}E=-\|\tau(u)\|^2_{L^2}-\frac{1}{4}|\Phi|^2=-(1-C\rho^2(t))E^2-(1-o(1))\|\tau(u)\|^2_{L^2}+CE^{\frac{5}{2}}.
\end{align} Therefore when $t$ is big enough, we have the following differential inequality
$$\frac{d}{dt}E\leq -\frac{1}{2}E^2,$$
which immediately implies  the following decay that $E(t)\leq \frac{4}{t}$ (when $t$ is big enough).

Since $\rho(t)\leq Ct^{-r}$ where $r\in (0,\frac{1}{100})$, using \eqref{equ:07} again, we get
\begin{align*}
\frac{d}{dt}E\leq -E^2+Ct^{-2r}E^2+CE^{\frac{5}{2}}
\end{align*} which implies $$\frac{d}{dt}E^{-1}\geq 1-Ct^{-2r}.$$ Then we get $$E(t)\leq \frac{1}{t}+\frac{C}{t^{1+2r}},$$ when $t$ is big enough.

From the equation that $$\frac{d}{dt}b=-\frac{b}{2}\int_M \left(\left|\frac{\partial u}{\partial \theta^1}\right|^2-\left|\frac{\partial u}{\partial \theta^2}\right|^2\right) d\theta^1d\theta^2,$$ where $(\theta^1,\theta^2)$ is the coordinate system defined in Lemma \ref{lem:01}, we have
\begin{equation}\label{inequ:12}
|b^{-1}\frac{d}{dt}b|\leq E \leq \frac{1}{t}+\frac{C}{t^{1+2r}}.\end{equation} Then $$|\log b(t)|\leq \log t+C,$$
which is
\begin{equation}\label{inequ:13}
b(t)+\frac{1}{b(t)}\leq Ct.
 \end{equation}
This implies $\rho(t)=\frac{1}{\sqrt{b(t)}}\geq \frac{C}{\sqrt{t}}$ and $$E(t)\leq \frac{C}{t}\leq C\rho(t)^2.$$
\end{proof}

\

\begin{proof}[\textbf{Proof of Proposition \ref{prop:02} for type I degeneration}]

We first prove the conclusion that there exists a time sequence $t_n\to\infty$ such that $$\|\tau(u(t_n),g_n)\|_{L^2(M)}=o(1)\rho(t_n)^2.$$  If not, then we suppose that for all $t>1$, there exists a positive constant  $\delta>0$ such that $$\|\tau(u)\|_{L^2}\geq\delta\rho^{2}(t).$$
Now, noting the fact that $$\rho(t)=\frac{1}{\sqrt{b(t)}}\geq \frac{C}{\sqrt{t}},\ \ \|\tau(u)\|_{L^2}\geq \delta\rho^2(t)\geq Ct^{-1},$$ by \eqref{equ:07}, when $t$ is big enough, we get that
\begin{align*}
\frac{d}{dt}E
\leq -E^2-Ct^{-2}\leq -(1+\frac{C}{2})E^2,
\end{align*} which implies $$E(t)\leq (1-\epsilon_0)t^{-1},$$ where $\epsilon_0=\frac{C}{4}$ is a small positive constant.
Using \eqref{inequ:12} again, we can improve estimate of $\rho(t)$ to the following that  $$\rho(t)\geq Ct^{\frac{1}{2}\epsilon_0-\frac{1}{2}}.$$

Since $E(t)\leq Ct^{-1}$ and $\lim_{t\to\infty}E(t)=0$, then we can choose a time sequence $t_n\to\infty$ such that $$ -\frac{d}{dt}E(t_n)\leq Ct_n^{-2}.$$ By Lemma \ref{lem:06}, we have $$\|\tau(u(t_n))\|_{L^2}\leq Ct_n^{-1}\leq Ct_n^{-\epsilon_0}\rho^2(t_n)=o(1)\rho^2(t_n),$$ which is a contradiction. Thus, we obtain that there exists a time sequence $t_n\to\infty$ such that $$\|\tau(u(t_n))\|_{L^2}=o(1)\rho^2(t_n).$$ By Lemma \ref{lem:06}, we get $E(u(t_n))\leq C\rho^2(t_n)$.
\end{proof}

\

\noindent\textbf{Type II degeneration:} the degenerated curve is a geodesic which connects two components of the boundary.

\

\begin{proof}[\textbf{Proof of Proposition \ref{prop:02} for type II degeneration}]
Let $\rho(t)$ be the length of the shortest geodesic of $(M,g(t))$ which connects two components of the boundary $\partial M$. In this case, cutting $(M,g(t))$ along its shortest geodesic we get a half-cylinder $Q_t=[-\frac{1}{2}\rho(t)^{-1},\frac{1}{2}\rho(t)^{-1}]\times \rho(t)\frac{1}{\pi}[0,\pi]$. Then we can view $u(t)$ as a map from $Q_t$ to $N$ with free boundary condition $u(t)|_{\partial Q_t}\subset K$, where $\partial Q_t=[-\frac{1}{2}\rho(t)^{-1},\frac{1}{2}\rho(t)^{-1}]\times \{0\} \cup [-\frac{1}{2}\rho(t)^{-1},\frac{1}{2}\rho(t)^{-1}]\times \{\pi\}$.

Set $\tilde{Q}_t=\pi \rho(t)^{-1}Q_t=[-\frac{1}{2}\pi\rho(t)^{-2},\frac{1}{2}\pi\rho(t)^{-2}]\times [0,\pi] $ and $v(t,x)=u(t,\frac{1}{\pi}\rho(t)x)$, $x\in \tilde{Q}_t$. Then it is easy to see that $v(t,x)$ is a map from $\tilde{Q}_t$ to $N$ with free boundary $v(t,x)|_{\partial \tilde{Q}_t}\subset K$ and with tension field $\tau(v)(t,x)=(\frac{\rho(t)}{\pi})^2\tau(u)(t,\frac{1}{\pi}\rho(t)x)$. Moreover, we have
\begin{align*}
E(v(t,x),\tilde{Q}_t)=E(u(t,x),Q_t),\ \ \left| \int_{\tilde{Q}_t}\phi(v)dx  \right|=\left| \int_{Q_t}\phi(u)dx  \right|,\\
\|\tau(v)\|_{L^1(\tilde{Q}_t)}=\|\tau(u)\|_{L^1(Q_t)},\ \ \|\tau(v)\|_{L^2(\tilde{Q}_t)}=\frac{\rho(t)}{\pi}\|\tau(u)\|_{L^2(Q_t)}.
\end{align*}

We still denote $T_t=\frac{1}{2}\pi\rho(t)^{-2}$, then $\tilde{Q}_t=[-T_t,T_t]\times [0,\pi]$. Since there is no energy concentration and the fact that $\|\tau(v)\|_{L^2(Q_t)}\leq C\rho(t)^{1-p}=o(1)$, by Lemma \ref{lem:small-energy-regularity} and Lemma \ref{lem:small-energy-regularity-1}, we have  $$osc_{\tilde{Q}_t}v\leq C(E(v)+\|\tau(v)\|_{L^2})=o(1).$$ Then by \eqref{def:function}, we can extend $v$ across the boundary to $\hat{v}:\hat{Q}_t=[-T_t,T_t]\times S^1\to N$. We want to remark that  $\hat{v}|_{\{-T_t\}\times S^1}\equiv \hat{v}|_{\{T_t\}\times S^1}$. Integrating by parts, by Proposition \ref{prop:01} , we get
\begin{align*}
\int_{\hat{Q}_t}\left|\frac{\partial \hat{v}}{\partial \theta} \right|^2dsd\theta&=\int_{\hat{Q}_t}\nabla \hat{v}\nabla (\hat{v}-\hat{v}^*)dsd\theta\\&=-\int_{\hat{Q}_t}\Delta \hat{v}\cdot (\hat{v}-\hat{v}^*)dsd\theta\\
&\leq C(\sqrt{E(v)}+\|\tau(v)\|_{L^2})(E(v)+\|\tau(v)\|_{L^2})\\
&\leq C\left(E^{\frac{3}{2}}(v)+\sqrt{E(v)}\|\tau(v)\|_{L^2}+ \|\tau(v)\|^2_{L^2}\right).
\end{align*}

Since
\begin{align*}
|\Phi|^2&\geq 4E^2(v)-8E\int_{\tilde{Q}_t}\left|\frac{\partial v}{\partial \theta} \right|^2dsd\theta\\&=4E^2(v)-CE^{\frac{5}{2}}-o(1)\|\tau(v)\|^2_{L^2},
\end{align*} then
\begin{align*}
\frac{d}{dt}E=-\|\tau(u)\|^2_{L^2}-\frac{1}{4}|\Phi|^2=-E^2-(1-o(1))\|\tau(u)\|^2_{L^2}+CE^{\frac{5}{2}}.
\end{align*} Therefore when $t$ is big enough, we have the following differential inequality
$$\frac{d}{dt}E\leq -\frac{1}{2}E^2,$$
which immediately implies  the following decay that $E(t)\leq \frac{4}{t}$ (when $t$ is big enough). Then
$$\frac{d}{dt}E^{-1}\geq 1-Ct^{-\frac{1}{2}},$$ which implies  $$E(t)\leq \frac{1}{t}+\frac{C}{t^{\frac{3}{2}}},$$ when $t$ is big enough. With above estimate, the rest proof is the same as the case of type I degeneration.
\end{proof}

\begin{proof}[\textbf{Proof of Theorem \ref{thm-main:04}}]
By Proposition \ref{prop:02}, the conclusions of Theorem \ref{thm-main:04} is a consequence of Theorem \ref{thm-main:03}.
\end{proof}

\


\providecommand{\bysame}{\leavevmode\hbox to3em{\hrulefill}\thinspace}
\providecommand{\MR}{\relax\ifhmode\unskip\space\fi MR }
\providecommand{\MRhref}[2]{%
  \href{http://www.ams.org/mathscinet-getitem?mr=#1}{#2}
}
\providecommand{\href}[2]{#2}

\end{document}